\newtheorem{mainthm}{Theorem}
\newtheorem{theorem}{Theorem}[section]
\newtheorem{definition}[theorem]{Definition}
\newtheorem{construction}[theorem]{Construction}
\newtheorem{lemma}[theorem]{Lemma}
\newtheorem{proposition}[theorem]{Proposition}
\newtheorem{corollary}[theorem]{Corollary}
\newtheorem{remark}[theorem]{Remark}
\newtheorem{examplecore}[theorem]{Example}}
\newcommand{\Ao}{\ensuremath{\mathbb{A}^1}}
\newcommand{\Hom}{\ensuremath{\operatorname{Hom}}}
\newcommand{\im}{\ensuremath{\operatorname{im}}}
\newcommand{\Sing}{\ensuremath{\operatorname{Sing}_{\bullet}^{\Ao}}}
\newcommand{\Spec}{\ensuremath{\operatorname{Spec}}}
\newcommand{\Z}{\mathbb{Z}}
\newcommand{\grsc}[1]{\Z[{#1}^\times/({#1}^\times)^2]}
\newcommand{\sqc}[1]{\left\langle{#1}\right\rangle}
\newcommand{\pf}[1]{\left\langle\!\left\langle{#1}\right\rangle\!\right\rangle}
\newcommand{\rpb}[1]{{\mathcal{RP}}(#1)}
\newcommand{\rrpb}[1]{\widetilde{\mathcal{RP}}(#1)}
\newcommand{\rbl}[1]{{\mathcal{RB}}(#1)}
\newcommand{\rrbl}[1]{{\widetilde{\mathcal{RB}}}(#1)}
\newcommand{\pb}[1]{\mathcal{P}(#1)}
\newcommand{\redpb}[1]{\tilde{\mathcal{P}}(#1)}
\newcommand{\indk}[1]{K_3^{\operatorname{ind}}(#1)}
\newcommand{\F}[1]{\mathbb{F}_{#1}}
\begin{document}

\title{On the third homology of $SL_2$ and weak homotopy invariance}

\author{Kevin Hutchinson and Matthias Wendt}

\date{\today}

\address{Kevin Hutchinson, School of Mathematical Sciences, University
  College Dublin}
\email{kevin.hutchinson@ucd.ie}
\address{Matthias Wendt, Fakult\"at Mathematik,
Universit\"at Duisburg-Essen, Thea-Leymann-Strasse 9, 45127, Essen, Germany}
\email{matthias.wendt@uni-due.de}

\subjclass{20G10 (14F42)}
\keywords{weak homotopy invariance, group homology}

\begin{abstract}
The goal of the paper is to achieve - in the special case of the
linear group $SL_2$ - some understanding of the relation
between group homology and its $\Ao$-invariant replacement. We discuss
some of the general properties of the $\Ao$-invariant group homology,
such as stabilization sequences and Grothendieck-Witt module
structures. Together with very precise knowledge about refined Bloch
groups, these methods allow us to deduce that in general there is a
rather large difference between group homology and its $\Ao$-invariant
version. In other words, weak homotopy invariance fails for $SL_2$
over many families of non-algebraically closed fields. 
\end{abstract}

\maketitle
\setcounter{tocdepth}{1}
\tableofcontents

\section{Introduction}
\label{sec:intro}

In this paper, we investigate the difference between group homology
and its $\Ao$-invariant version. It is well-known that K-theory and
hence also the homology of the infinite general linear group $GL$ is
$\Ao$-invariant, i.e., for any regular ring $R$ and any $n\geq 0$, the
map $GL(R)\rightarrow GL(R[T_1,\dots,T_n])$ given by inclusion of
constants induces an isomorphism in homology. This, however, is only a
stable phenomenon: the examples of Krsti{\'c} and McCool
\cite{krstic:mccool} show that $H_1$ of 
$SL_2$ is not $\Ao$-invariant because there exist many non-elementary
matrices in $SL_2(R[T])$ if $R$ is not a field. 

In light of this failure of the strong form of $\Ao$-invariance, one
can combine the separate polynomial rings into the simplicial object
$R[\Delta^\bullet]$ with $n$-simplices
$$R[\Delta^n]=R[T_0,\dots,T_n]/\left(\sum_{i=0}^nT_i=1\right)\cong
R[T_1,\dots,T_n]
$$
and ask if the inclusion of constants $SL_2(R)\to
SL_2(R[\Delta^\bullet])$ induces an isomorphism on group
homology. This is the question of ``weak $\Ao$-invariance'' or ``weak
homotopy invariance''. The attribute ``weak''  is justified by the
fact that if the strong form of $\Ao$-invariance above is satisfied, i.e.,
if the map $SL_2(R)\to SL_2(R[T_1,\dots,T_n])$ induces  an 
isomorphism on homology for all $n\geq 0$, then the spectral sequence computing
the homology of $SL_2(R[\Delta^\bullet])$ collapses and induces an
isomorphim $SL_2(R)\to SL_2(R[\Delta^\bullet])$. More generally, it
makes sense to consider, for an arbitrary linear algebraic group $G$,
the homology of the polynomial singular resolution
$BG(R[\Delta^\bullet])$  as an $\Ao$-invariant  replacement of group
homology, cf. \prettyref{def:ghao} below. As mentioned earlier, weak
homotopy invariance asks if (or when) the natural change-of-topology
morphism $BG(R)\rightarrow BG(R[\Delta^\bullet])$ induces an
isomorphism on homology. In the paper, we will investigate some
properties of this change-of-topology morphism and prove the following 
main result which exhibits situations in which even weak homotopy
invariance fails. 
We refer to \prettyref{thm:main2b}  and \prettyref{thm:main2c} for
precise formulations.  

\begin{mainthm}
\label{thm:main}
\begin{enumerate}
\item Let $k$ be an infinite perfect field of characteristic $\neq 2$
  which is  finitely-generated over its 
  prime field. Let $\ell$ be an odd prime such that
  $[k(\zeta_\ell):k]$ is even,  where $\zeta_\ell$ is a primitive
  $\ell$-th root of unity. Then the kernel of the change-of-topology
  morphism  
$$
H_3(SL_2(k),\mathbb{Z}/\ell)\rightarrow
H_3(BSL_2(k[\Delta^\bullet]),\mathbb{Z}/\ell) 
$$
is not finitely generated. In particular, weak homotopy invariance
with finite coefficients can fail for fields which are not
algebraically closed. 
\item For $k$ a field complete with respect to a discrete valuation,
  with finite residue field $\overline{k}$ of order $q=p^f$ ($p$ odd),
  the change-of-topology morphism  
$$
H_3(SL_2(k),\mathbb{Z}[1/2])\rightarrow
H_3(BSL_2(k[\Delta^\bullet]),\mathbb{Z}[1/2]) 
$$
factors through $\indk{k}\otimes\mathbb{Z}[1/2]$, and its kernel
is isomorphic to the pre-Bloch group
$\pb{\overline{k}}\otimes\mathbb{Z}[1/2]$ which is cyclic of order
$(q+1)'$, where $n'$ denotes the odd part of the positive integer $n$.  
\end{enumerate}
\end{mainthm}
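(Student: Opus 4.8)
The plan is to deduce both parts from precise computations with refined Bloch groups, whose detailed forms are \prettyref{thm:main2b} and \prettyref{thm:main2c}; the strategy rests on two structural inputs. The first is the description, valid for any infinite field $F$ of characteristic $\neq 2$, of $H_3(SL_2(F),\Z)$ up to small $2$-torsion in terms of the refined Bloch group $\rbl{F}$ together with the stabilisation map $H_3(SL_2(F))\to H_3(SL(F))$, whose image is controlled by $\indk{F}$: here $\rbl{F}$ carries a genuine, non-trivial $\Z[F^\times/(F^\times)^2]$-module (equivalently $GW(F)$-module) structure, and the kernel of the natural surjection $\rbl{F}\twoheadrightarrow\mathcal{B}(F)$ onto the ordinary Bloch group is essentially the submodule $I(F)\cdot\rbl{F}$ generated by the Pfister elements $\pf{a}$, $a\in F^\times$. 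The second input concerns the target: since $H_3(BSL_2(R[\Delta^\bullet]))$ is $\Ao$-invariant, the stabilisation sequences for $\Ao$-invariant homology established in the paper — together with the fact that the homology of $SL$ is already $\Ao$-invariant, so that $H_3(BSL(R[\Delta^\bullet]))\cong H_3(SL(R))$ — show that stabilisation $H_3(BSL_2(R[\Delta^\bullet]))\to H_3(BSL(R[\Delta^\bullet]))$ is injective after inverting $2$ with image governed by $\indk{R}$, and that the Grothendieck-Witt module structure on the target is only the \emph{stable} one, on which the fundamental ideal $I(k)\subset GW(k)$ (equivalently the augmentation ideal of $\Z[k^\times/(k^\times)^2]$) acts trivially. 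Hence the change-of-topology morphism kills $I(k)\cdot H_3(SL_2(k),A)$ and, by the square relating stabilisation in group homology with stabilisation in $\Ao$-invariant homology, its kernel coincides after inverting $2$ with that of $H_3(SL_2(k))\to H_3(SL(k))$.

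For part (2), with $k$ complete with finite residue field $\overline{k}=\F{q}$, I would invoke the localisation theory for refined Bloch groups developed in the paper, which furnishes a residue homomorphism $\rbl{k}\to\rbl{\F{q}}$. Combined with the first input this pins down $H_3(SL_2(k),\Z[1/2])$ precisely enough to show that the kernel of $H_3(SL_2(k),\Z[1/2])\to\indk{k}\otimes\Z[1/2]$ is exactly $I(k)\cdot H_3(SL_2(k),\Z[1/2])$ and that the residue map carries it isomorphically onto $\pb{\F{q}}\otimes\Z[1/2]$; here one uses that $\F{q}^\times$ is cyclic, so $\F{q}^\times\wedge\F{q}^\times=0$ and $\rbl{\F{q}}$, $\mathcal{B}(\F{q})$, $\pb{\F{q}}$ all agree after inverting $2$. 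Together with the injectivity of $\indk{k}\otimes\Z[1/2]\hookrightarrow H_3(BSL_2(k[\Delta^\bullet]),\Z[1/2])$ coming from the $\Ao$-invariant stabilisation sequences, this yields the asserted factorisation through $\indk{k}\otimes\Z[1/2]$ and the identification of the kernel. Finally, the order of $\pb{\F{q}}$ follows from Quillen's $K_3(\F{q})=\Z/(q^2-1)$ and Suslin's exact sequence $0\to\widetilde{\operatorname{Tor}}(\mu_{\F{q}},\mu_{\F{q}})\to\indk{\F{q}}\to\mathcal{B}(\F{q})\to0$: since $q-1$ is even, $\widetilde{\operatorname{Tor}}(\mu_{\F{q}},\mu_{\F{q}})\cong\Z/2(q-1)$, so $\pb{\F{q}}=\mathcal{B}(\F{q})$ is cyclic of order $(q+1)/2$, whence $\pb{\F{q}}\otimes\Z[1/2]$ is cyclic of order $(q+1)'$.

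For part (1), note first that the hypotheses force $k$ to have characteristic $0$ (a perfect field finitely generated over $\F{p}$ is finite), so fix a model $X$ of $k$ smooth over $\Z[1/N]$ for a suitable even $N$; its closed points $v$ have finite residue fields $\F{q_v}$ of odd order. Running the localisation/residue theory for refined Bloch groups over $X$, one obtains that the kernel of the change-of-topology morphism with $\Z/\ell$-coefficients surjects onto $\bigoplus_v\pb{\F{q_v}}\otimes\Z/\ell$, the residues in the refined Bloch group over the closed points of $X$ — classes that vanish in the target because the latter carries the trivial $GW(k)$-module structure. By the finite-field computation above, $\pb{\F{q_v}}\otimes\Z/\ell\neq0$ precisely when $\ell\mid q_v+1$, i.e. $q_v\equiv-1\pmod{\ell}$, and $q_v\bmod\ell$ is the Frobenius at $v$ in the \'etale cover $X[\zeta_\ell]\to X$, which lies in $\operatorname{Gal}(k(\zeta_\ell)/k)\hookrightarrow(\Z/\ell)^\times$. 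A Chebotarev density statement for $X$ realises every element of this subgroup as such a Frobenius for infinitely many $v$, and since $[k(\zeta_\ell):k]$ is even the subgroup has even order and therefore contains $-1$. Thus infinitely many closed points contribute non-zero $\Z/\ell$-summands, the kernel surjects onto an infinite direct sum of copies of $\Z/\ell$, and so it cannot be finitely generated.

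The hard part will be the two structural inputs rather than the arithmetic: on the group-homology side, controlling $H_3(SL_2(F))$, the $GW(F)$-module structure on $\rbl{F}$, and its residue behaviour precisely enough — including the small $2$-torsion discrepancies; and on the $\Ao$-invariant side, establishing the stabilisation sequences, and in particular injectivity of stabilisation from $SL_2$ to $SL$ in degree $3$ after inverting $2$, so that the target is genuinely governed by $\indk{k}$ carrying its stable, hence trivial, module structure. Chebotarev over arithmetic schemes and the finite-field computations are, by comparison, routine.
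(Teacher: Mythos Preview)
Your overall architecture is right, but there is a genuine gap in the ``second structural input''. You assert that the $\Ao$-invariant stabilisation
\[
H_3(BSL_2(k[\Delta^\bullet]))\longrightarrow H_3(BSL_\infty(k[\Delta^\bullet]))
\]
is injective after inverting $2$, and hence that the $GW(k)$-action on the target factors through the augmentation $GW(k)\to\Z$, so that the change-of-topology morphism kills $\mathcal{I}_k\cdot H_3(SL_2(k))$. Neither claim is established in the paper, and the first is not known. What the paper actually proves (\prettyref{prop:gw}) is only that the $\Z[k^\times/(k^\times)^2]$-action on $H_3(BSL_2(k[\Delta^\bullet]))$ descends to a $GW(k)$-action; indeed the proof records that on the $KSp_3$ piece a unit $u$ acts by the class $\langle u\rangle\in GW(k)$, which is not the trivial action. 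Consequently, the change-of-topology kernel is only shown to contain $\mathcal{J}_k\cdot H_3(SL_2(k))$, where $\mathcal{J}_k=\ker(\Z[k^\times/(k^\times)^2]\to GW(k))$ is generated by the Steinberg elements $\pf{a}\pf{1-a}$, not the larger $\mathcal{I}_k\cdot H_3(SL_2(k))$.

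This is exactly why the paper needs the explicit computations you gloss over. For part~(1), one must show that already $\mathcal{J}_kH_3(SL_2(k),\Z/\ell)$ surjects onto the infinite direct sum of residue pre-Bloch groups; this is \prettyref{cor:ufd}, whose proof produces specific Steinberg elements $\pf{b}\pf{1-b}$ acting as multiplication by $4$ on the relevant eigenspaces. For part~(2), the upper bound on the kernel does come from the factorisation through $\indk{k}$ (\prettyref{lem:bsltoindk}), but the matching lower bound requires the identity $\mathcal{J}_kH_3(SL_2(k),\Z[1/2])=\mathcal{I}_kH_3(SL_2(k),\Z[1/2])$ for these particular fields (\prettyref{lem:mchi} and \prettyref{cor:hloc}), again established by exhibiting a Steinberg element acting invertibly on each nontrivial character eigenspace. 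Your shortcut via a putative trivial action on the target would bypass both of these, but it rests on an unproved (and likely false) injectivity statement. A minor point: $\pb{\F{q}}$ has order $q+1$, not $(q+1)/2$; it is $\mathcal{B}(\F{q})$ that has order $(q+1)/2$, though of course they agree after inverting $2$.
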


There are several ingredients coming together. On the side of
$\Ao$-invariant group homology, we can use $\Ao$-homotopy theory to
produce stabilization results and Grothendieck-Witt module
structures. On the other hand, the computations in
\cite{hutchinson:rb,hutchinson:bw,hutchinson:loc} 
allow one to understand very explicitly the structure of
$H_3(SL_2(k),\mathbb{Z}[1/2])$. In particular, the above theorem
follows from a comparison of the $k^\times/(k^\times)^2$-module
structures on both sides: the module structure on  the $\Ao$-invariant
group homology factors  
through an action of the Grothendieck-Witt ring of $k$ but the corresponding 
statement for $H_3(SL_2(k),\mathbb{Z}[1/2])$ is false for fields with
non-trivial valuations. 
In the case of number fields there is a further contrast between the two sides:
the $\Ao$-invariant group
homology is finitely generated, while the residue maps of
\cite{hutchinson:rb} show that the group homology itself is not
finitely generated.

Our main theorem also has consequences for the comparison of unstable
K-theories. There are (at least) two natural definitions of unstable
K-theory associated to $SL_2$ and a field $k$, one via Quillen's 
plus-construction definition $\pi_iBSL_2(k)^+$ and one via the
Karoubi-Villamayor definition $\pi_iBSL_2(k[\Delta^\bullet])$. From
\prettyref{thm:main} and the work of Asok and Fasel \cite[Theorem
3]{asok:fasel}, 
we can deduce that these two definitions of unstable K-theory fail to
be equivalent for many families of non-algebraically closed
fields. This is discussed in detail in \prettyref{sec:ktheory}, cf. in
particular \prettyref{prop:k3a} and \prettyref{prop:k3b}.

The methods we use to establish  \prettyref{thm:main} above  only
apply to fields which are not quadratically closed: the 
change-of-topology morphism is injective for 
quadratically closed fields of characteristic $0$,
cf. \prettyref{cor:quadclosed}. Moreover, over algebraically closed
fields of characteristic $0$, 
weak homotopy invariance holds in degree $3$ with finite coefficients,
cf. \prettyref{cor:pos}. Note that weak homotopy invariance with
finite coefficients over algebraically closed fields  is a necessary
ingredient in Morel's approach to the Friedlander-Milnor conjecture
\cite{morel}.    

We would like to mention some questions that could be
pursued in further research: first, the relation between homotopy
invariance and weak homotopy invariance deserves further study. The
behaviour of the spectral sequence associated to the bisimplicial set
$BSL_2(k[\Delta^\bullet])$ plays a major role, and the differentials
of the spectral sequence relate such disparate phenomena like
counterexamples to homotopy invariance, scissors 
congruence groups and surjective stabilization for symplectic
groups. Second, it would be interesting to see what happens to (weak)
homotopy invariance  for homology groups beyond the metastable
range as well as for higher rank groups. 

\emph{Structure of the paper:} We review the definition and basic
properties of the $\Ao$-invariant version of group homology in
\prettyref{sec:hoinvdef}. In \prettyref{sec:stabil} we prove some
stabilization results which are needed in  
\prettyref{sec:indk3} and \prettyref{sec:hoinv}. In \prettyref{sec:indk3} we 
 review the relationship between $H_3(SL_2(k),\Z)$ and $\indk{k}$ and apply these results to the 
change-of-topology morphism in the case of quadratically 
closed fields of characteristic $0$. The Grothendieck-Witt module structures on $\Ao$-invariant homology of $SL_2$ 
are investigated in \prettyref{sec:mod}. Some details on (refined) Bloch
groups and their module structures are provided in
\prettyref{sec:bloch}.  In \prettyref{sec:hoinv}, we use  
the results of the preceding sections to prove our main results:
 that the kernel of the change-of-topology morphism is often very
 large. In \prettyref{sec:ktheory}, we discuss the consequences of our
 results for comparison of unstable K-theories. 
Finally, in \prettyref{sec:cokernel} we conclude with some remarks on
the cokernel of the change-of-topology morphism.  

\emph{Acknowledgements:} We would like to thank Aravind Asok for some
discussions on the computations in
\cite{asok:fasel,asok:fasel:spheres}, and Jens 
Hornbostel and Marco Schlichting for some discussions on finiteness
properties of symplectic K-theory of number fields. 
We are very grateful to the anonymous referee whose detailed report,
helpful remarks and insightful suggestions helped to greatly improve the
paper in form and content.

\section{Group homology made 
\texorpdfstring{$\Ao$}{A1}-invariant: definition} 
\label{sec:hoinvdef}

In this section, we recall the construction which enforces
$\Ao$-invariance in group homology, i.e., replaces group homology by
something representable in $\Ao$-homotopy theory. The crucial
definition is the singular resolution of a linear algebraic group, 
cf. \cite{jardine:homotopy}: 
\begin{definition}
Let $k$ be a field. There is a standard simplicial $k$-algebra
$k[\Delta^\bullet]$ with $n$-simplices given by 
$$
k[\Delta^n]=k[X_0,\dots,X_n]/\left(\sum X_i-1\right)
$$
and face and degeneracy maps given by 
$$
d_i(X_j)=\left\{\begin{array}{ll}
X_j&j<i\\0&j=i\\X_{j-1}&j>i
\end{array}\right.,
\qquad
s_i(X_j)=\left\{\begin{array}{ll}
X_j&j<i\\X_i+X_{i+1}&j=i\\X_{j+1}&j>i
\end{array}\right..
$$
\end{definition}

To a linear algebraic group $G$, we can then associate a simplicial
group $G(k[\Delta^\bullet])$.
Recall that the classifying space of the simplicial group
$G(k[\Delta^\bullet])$ 
is defined to be the diagonal of the bisimplicial set
$BG(k[\Delta^\bullet])$. One simplicial direction is given by the
usual classifying space construction, the other one is given by the
simplicial algebra $k[\Delta^\bullet]$ above. 
We will abuse notation and denote the 
diagonal of the bisimplicial by $BG(k[\Delta^\bullet])$ as well. This
will cause no confusion as (except in \prettyref{sec:cokernel}) we
will never actually deal with the bisimplicial set, only with its
diagonal. 

\begin{definition}\label{def:ghao}
We call the homology of $BG(k[\Delta^\bullet])$ the \emph{group
  homology made $\Ao$-invariant}. There is a natural inclusion
$G(k)\hookrightarrow G(k[\Delta^\bullet])$ which identifies $G(k)$
with the set of zero-simplices $G(k[X_0]/(X_0-1))$ in
$G(k[\Delta^\bullet])$. We refer to this as the
\emph{change-of-topology} morphism. 
\end{definition}

The natural coefficients for the $\Ao$-invariant version of group
homology are $G(k[\Delta^\bullet])$-modules, where a
$G(k[\Delta^\bullet])$-module $M_\bullet$ is a simplicial abelian
group with a simplicial action of  $G(k[\Delta^\bullet])$. While
studying these objects and their relation to continuous
representations of topological groups would be very interesting, we
will only look at trivial coefficients. 

\begin{definition}
\label{def:whoinv}
Let $M$ be an abelian group with trivial $G$-action. Let $M_\bullet$
be the corresponding constant simplicial group, viewed as trivial
$G(k[\Delta^\bullet])$-module. We say that the group 
$G$ has \emph{weak homotopy invariance in degree $n$ over the field
  $k$ with $M$-coefficients}, if
the change-of-topology map induces an isomorphism 
$$
H_n(G(k),M)\stackrel{\cong}{\longrightarrow}
H_n(BG(k[\Delta^\bullet]),M_\bullet). 
$$
\end{definition}

\begin{remark}
The map $G(k)\to G(k[\Delta^\bullet])$ is called change-of-topology
because  we can view $G(k[\Delta^\bullet])$ as the discrete group
$G(k)$ equipped with a topology coming from algebraic
simplices. There is a strong structural similarity between weak
homotopy invariance over an algebraically closed field with finite
coefficients and the Friedlander-Milnor conjecture, cf. \cite[Section
5]{knudson:book}: 
\begin{enumerate}
\item Weak homotopy invariance asks if for an algebraic group $G$, the
  change-of-topology $G(k)\to 
  G(k[\Delta^\bullet])$ (from the discrete group to the group
  topologized by algebraic simplices) induces an isomorphism in
  group homology.
\item Milnor's form of the Friedlander-Milnor conjecture asks if for a
  complex Lie group $G$, the change-of-topology $G^\delta\to G$ (from
  the discrete group to the group with the analytic topology) induces
  an isomorphism in group homology with finite coefficients.
\item Friedlander's generalized isomorphism conjecture asks if for an
  algebraically closed field $k$ and a linear group $G$, the
  change-of-topology from the discrete group $G(k)$ to the group $G$
  ``with the \'etale topology'' induces an isomorphism in group
  homology with finite coefficients.
\end{enumerate}
In fact, it is the main
result of \cite{morel} that the above  questions about different
change-of-topology morphisms are equivalent. Calling the morphism
$G(k)\to G(k[\Delta^\bullet])$ a change-of-topology is 
supposed to underline these similarities.
\end{remark}

The change-of-topology morphism and in particular its effect on third
homology with trivial coefficients is the centre of interest in the
present work.

\section{Group homology made 
\texorpdfstring{$\Ao$}{A1}-invariant: stabilization}
\label{sec:stabil}

In this section, we will develop some stabilization results for
the $\Ao$-invariant homology of special linear and symplectic
groups. These results will be helpful in understanding  the
change-of-topology morphism.  

\subsection{Algebraic groups and fibre bundles in $\Ao$-homotopy
  theory} 

We start with some recollection of statements from $\Ao$-homotopy
theory. Recall that Morel and Voevodsky defined $\Ao$-homotopy theory
in \cite{morel:voevodsky} as the $\Ao$-localization of the category of
simplicial Nisnevich sheaves on the category $\operatorname{Sm}_S$ of
smooth schemes over a base scheme $S$. This paved the way to use
methods of algebraic topology in the study of algebraic varieties. 

As part of an explicit fibrant replacement functor, Morel and
Voevodsky \cite[Section 2.3]{morel:voevodsky} considered the
$\Ao$-singular resolution for simplicial sheaves: to any simplicial
sheaf $X$ on  the site $\operatorname{Sm}_S$ of smooth schemes over
the base scheme $S$, the $\Ao$-singular resolution is given as
$$
\Sing(X)(U)=\Hom(U\times\Delta^\bullet,X).
$$ 
Because a field is a henselian local ring, the simplicial group
$G(k[\Delta^\bullet])$ above is the simplicial group $\Sing(G)(k)$  
of sections over $\Spec k$ of (the Nisnevich sheafification of) the
$\Ao$-singular resolution $\Sing(G)$ of $G$. The same assertion holds
for the corresponding classifying space, i.e., 
$$
(B\Sing(G))(k)\cong B(\Sing(G)(k)).
$$
On the left hand side, we have the sections over $k$ of the
classifying space construction \cite[Section 4.1]{morel:voevodsky}
applied to the simplicial sheaf of groups $\Sing(G)$; on the right
hand side, we have the classifying space associated to the sections of
the simplicial resolution of $G$; these two simplicial sets are weakly
equivalent. The right-hand side space $B(\Sing(G)(k))$ is the one we
are interested in studying because its homology is group homology of
$G$ made $\Ao$-invariant, the left-hand side space $(B\Sing(G))(k)$ is
the one we can study using $\Ao$-homotopy theory.

The stabilization results will be consequences of fibre sequences
associated to principal bundles under algebraic groups. The
corresponding theory was developed by Morel \cite[Sections 8 and
9]{morel:book} and Wendt \cite{torsors}. We only recall the particular
fibre sequences we will need. 

\begin{proposition}
\label{prop:torsors}
\begin{enumerate}
\item 
For each $n\geq 1$, there is a fibre sequence 
$$
\Sing(\mathbb{A}^n\setminus\{0\})(k)\to B\Sing(SL_{n-1})(k)\to
B\Sing(SL_n)(k)
$$
\item 
For each $n\geq 1$, there is a fibre sequence 
$$
\Sing(\mathbb{A}^{2n}\setminus\{0\})(k)\to B\Sing(Sp_{2n-2})(k)\to
B\Sing(Sp_{2n})(k)
$$
\end{enumerate}
\end{proposition}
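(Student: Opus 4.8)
The plan is to deduce both fibre sequences from the general machinery of principal bundles in $\Ao$-homotopy theory, applied to the standard quotient presentations of special linear and symplectic groups. The key geometric input is that for a linear algebraic group $G$ acting on a smooth variety $X$ with a geometric quotient $X/G$, if the action is suitably nice (a Zariski- or Nisnevich-locally trivial torsor), then the quotient map $X \to X/G$ is a $G$-torsor in the Nisnevich topology, and by the work of Morel \cite[Sections 8 and 9]{morel:book} and Wendt \cite{torsors} such torsors give rise to fibre sequences $\Sing(G)(k) \to \Sing(X)(k) \to \Sing(X/G)(k)$ after applying the $\Ao$-singular resolution and taking sections over the field. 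Taking classifying spaces, a $G$-torsor $P \to Y$ with $G'$-action commuting appropriately yields the homogeneous-space fibre sequences; concretely I would use the identifications $SL_n/SL_{n-1} \cong \mathbb{A}^n \setminus \{0\}$ (via the action of $SL_n$ on row or column vectors, with stabilizer of a coordinate vector conjugate to the evident copy of $SL_{n-1}$) and $Sp_{2n}/Sp_{2n-2} \cong \mathbb{A}^{2n} \setminus \{0\}$ (via the transitive action of $Sp_{2n}$ on nonzero vectors of the symplectic space, the stabilizer being an extension of $Sp_{2n-2}$ by a unipotent group, but $\Ao$-contractibility of that unipotent part lets one replace it by $Sp_{2n-2}$).

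For part (1), I would first recall that $SL_n$ acts transitively on $\mathbb{A}^n \setminus \{0\}$ with stabilizer of $e_n = (0,\dots,0,1)^t$ equal to the subgroup of matrices with last column $e_n$; this stabilizer retracts $\Ao$-equivariantly onto the block-diagonal $SL_{n-1}$ sitting in the upper-left corner, since the extra unipotent radical (the bottom row entries before the last) forms an affine space on which one can contract. The quotient map $SL_n \to \mathbb{A}^n\setminus\{0\}$ is a Zariski-locally trivial $SL_{n-1}$-torsor (after absorbing the contractible unipotent part). Applying $B\Sing(-)$ and the fibre-sequence theorem for torsors then gives the desired fibre sequence $\Sing(\mathbb{A}^n \setminus \{0\})(k) \to B\Sing(SL_{n-1})(k) \to B\Sing(SL_n)(k)$, where the fibre term appears because $B$ of a torsor total space over $B$ of the base has the quotient $X/G$ (here $\mathbb{A}^n\setminus\{0\}$) as homotopy fibre. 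For part (2) the argument is identical with $SL$ replaced by $Sp$ and $\mathbb{A}^n\setminus\{0\}$ replaced by $\mathbb{A}^{2n}\setminus\{0\}$, using the standard fact that $Sp_{2n}$ acts transitively on $\mathbb{A}^{2n}\setminus\{0\}$ with stabilizer an extension of $Sp_{2n-2}$ by a unipotent radical that is $\Ao$-contractible.

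The main technical obstacle is verifying the precise hypotheses under which the torsor fibre-sequence theorems of \cite{morel:book} and \cite{torsors} apply: one needs the relevant torsors to be Nisnevich-locally trivial (equivalently, by the work on the Grothendieck--Serre conjecture in the relevant cases, Zariski-locally trivial), and one needs the replacement of the actual stabilizer subgroup by $SL_{n-1}$ (resp. $Sp_{2n-2}$) to be justified by strict $\Ao$-homotopy equivalence of the classifying spaces — that is, the inclusion $SL_{n-1} \hookrightarrow \Stab(e_n)$ must induce an $\Ao$-weak equivalence on classifying spaces, which follows from $\Ao$-contractibility of the unipotent radical together with the fact that $\Ao$-contractible group extensions do not change classifying spaces in the $\Ao$-local category. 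A secondary point requiring care is the interchange of $\Sing$ and sections over $\Spec k$ with the classifying space construction, i.e., the identification $(B\Sing(G))(k) \cong B(\Sing(G)(k))$ recalled in the text — this has already been granted in the excerpt, so I would simply cite it. Once these foundational identifications are in place, both fibre sequences are formal consequences, so I expect the write-up to be short, essentially a matter of assembling citations to \cite{morel:book,torsors} and recording the geometric quotient presentations.
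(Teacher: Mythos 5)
Your proposal is correct and follows essentially the same route as the paper: the paper simply cites the torsor fibre sequences of Wendt (Theorem 6.8 of \cite{torsors}) and Morel (\cite[Theorems 8.1 and 8.9]{morel:book}) and observes that the stated sequences are their deloopings, while you have unpacked the standard argument behind those citations (the homogeneous-space presentations $SL_n/SL_{n-1}\cong\mathbb{A}^n\setminus\{0\}$ and $Sp_{2n}/Sp_{2n-2}\cong\mathbb{A}^{2n}\setminus\{0\}$, contractibility of the unipotent radicals of the stabilizers, and local triviality of the torsors, which here follows most directly from $SL_n$ and $Sp_{2n}$ being special in the sense of Serre).
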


\begin{proof}
The fibre sequences in the statement are deloopings of the fibre
sequences established in  \cite[Theorem 6.8]{torsors}. For further
details on the proof, cf. loc. cit. Note also that the hypothesis on
the field $k$ being infinite, which appears in \cite{torsors}, is
superfluous in the cases cited because these groups are special in the
sense of Serre.  Alternatively, the first fibre sequence can also be
deduced from \cite[Theorem 8.1, Theorem 8.9]{morel:book}. 
\end{proof}

We remark that the spaces appearing in the above fibre sequences are
$\Ao$-connected so that the choice of base points does not matter.

The next result recalls a consequence of Morel's computation of
low-dimensional homotopy groups of spheres. 

\begin{proposition}
\label{prop:sphere}The simplicial set
$\Sing(\mathbb{A}^{n}\setminus\{0\})(k)$ is $(n-2)$-connected and  for
$n\geq 2$ there is an isomorphism
$$
\pi_{n-1}(\Sing(\mathbb{A}^n\setminus\{0\})(\Spec k))\cong
K^{MW}_n(k), 
$$
where $K^{MW}_n(k)$ denotes the Milnor-Witt K-theory of $k$,
cf. \cite[Section 3]{morel:book}. 
\end{proposition}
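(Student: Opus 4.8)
The plan is to deduce this as a recollection of Morel's computation of the low-degree $\Ao$-homotopy sheaves of motivic spheres. The first step is the identification of $\mathbb{A}^n\setminus\{0\}$, in the pointed $\Ao$-homotopy category, with the sphere $S^{n-1}_s\wedge\mathbb{G}_m^{\wedge n}$, i.e. the $(n-1)$-fold simplicial suspension of $\mathbb{G}_m^{\wedge n}$: covering $\mathbb{A}^n\setminus\{0\}$ by the opens $\{x_1\neq 0\}\cong\mathbb{G}_m\times\mathbb{A}^{n-1}$ and $\mathbb{A}^1\times(\mathbb{A}^{n-1}\setminus\{0\})$ and computing the resulting $\Ao$-homotopy pushout gives such a weak equivalence inductively, cf. \cite{morel:voevodsky}. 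Since $\mathbb{G}_m^{\wedge n}$ is $(-1)$-connected and simplicial suspension raises connectivity by one, this sphere is $(n-2)$-connected; by the $\Ao$-connectivity theorem of \cite{morel:book} the space $\Sing(\mathbb{A}^n\setminus\{0\})$ is then $\Ao$-$(n-2)$-connected, and in particular $\Sing(\mathbb{A}^n\setminus\{0\})(k)$ is $(n-2)$-connected.

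For the homotopy group in degree $n-1$, I would reduce to a statement about the $\Ao$-homotopy sheaf $\boldsymbol{\pi}_{n-1}^{\Ao}(\mathbb{A}^n\setminus\{0\})$ via two observations. First, for $n\geq 2$ the scheme $\mathbb{A}^n\setminus\{0\}$ is $\Ao$-connected, so a single application of $\Sing$ already computes $\pi_0$ correctly, and the first nonvanishing $\Ao$-homotopy sheaf of an $\Ao$-connected space is strongly (for $n=2$, where it is the fundamental sheaf) resp. strictly (for $n\geq 3$) $\Ao$-invariant by Morel's theorems, hence unramified; it follows that $\pi_{n-1}(\Sing(\mathbb{A}^n\setminus\{0\})(\Spec k))$ coincides with the sections $\boldsymbol{\pi}_{n-1}^{\Ao}(\mathbb{A}^n\setminus\{0\})(k)$. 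Second, since the space is $\Ao$-$(n-2)$-connected, the $\Ao$-Hurewicz theorem identifies this first nonvanishing homotopy sheaf with the first nonvanishing reduced $\Ao$-homology sheaf, which under the identification above is $\widetilde{\mathbf{H}}_0^{\Ao}(\mathbb{G}_m^{\wedge n})$. Morel's computation of the $\Ao$-homology of $\mathbb{G}_m$-spheres gives $\widetilde{\mathbf{H}}_0^{\Ao}(\mathbb{G}_m^{\wedge n})\cong\mathbf{K}^{MW}_n$, and taking sections over $\Spec k$ produces $K^{MW}_n(k)$, as claimed. For $n=2$ one can instead invoke the $\Ao$-equivalence $\mathbb{A}^2\setminus\{0\}\simeq SL_2$ induced by the first-column projection $SL_2\to\mathbb{A}^2\setminus\{0\}$, which is an $\Ao$-bundle, together with Morel's computation $\boldsymbol{\pi}_1^{\Ao}(SL_2)\cong\mathbf{K}^{MW}_2$.

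The genuine difficulty here is entirely Morel's: the identification of $\widetilde{\mathbf{H}}_0^{\Ao}(\mathbb{G}_m^{\wedge n})$ with Milnor--Witt K-theory requires the full machinery of strictly $\Ao$-invariant sheaves, their Rost--Schmid (Gersten-type) resolutions, and the $\Ao$-Hurewicz theorem. The only point demanding a little care in our reduction is the passage from sheaves to sections over $\Spec k$: since $\Sing(\mathbb{A}^n\setminus\{0\})$ is not $\Ao$-fibrant one cannot read off its $\Ao$-homotopy sheaves from the $\Sing$-model naively, and the claim that one application of $\Sing$ nonetheless suffices in the first nonvanishing degree relies on the unramifiedness of the homotopy sheaf in question.
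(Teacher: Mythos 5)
Your identification of $\mathbb{A}^n\setminus\{0\}$ with $S^{n-1}_s\wedge\mathbb{G}_m^{\wedge n}$ and the ensuing Hurewicz computation of $\boldsymbol{\pi}_{n-1}^{\Ao}(\mathbb{A}^n\setminus\{0\})\cong\mathbf{K}^{MW}_n$ is a correct, if condensed, account of how Morel proves the sheaf-level statement, which the paper simply cites as \cite[Theorem 6.40]{morel:book}. The gap is in the step you yourself flag as delicate: the passage from the $\Ao$-homotopy \emph{sheaves} to the homotopy groups of the simplicial set $\Sing(\mathbb{A}^n\setminus\{0\})(k)$. You argue that because the first nonvanishing homotopy sheaf is strongly resp.\ strictly $\Ao$-invariant, hence unramified, a single application of $\Sing$ already computes it in that degree. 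That inference is not valid. Unramifiedness is a property of the homotopy sheaves of the $\Ao$-\emph{localization}, and Morel's theorems guarantee it for the first nonvanishing homotopy sheaf of \emph{any} $\Ao$-connected space; if it implied that $\pi_{n-1}(\Sing(X)(k))$ agrees with $\boldsymbol{\pi}_{n-1}^{\Ao}(X)(k)$, every smooth scheme would be ``$\Ao$-naive'' in its first nonvanishing degree, which is false (already $\pi_0(\Sing(X)(k))\to\pi_0^{\Ao}(X)(k)$ need not be bijective for general smooth affine $X$). The localization is obtained by iterating $\Sing$ interleaved with Nisnevich-local fibrant replacement, and the comparison from the sections of the one-step $\Sing$-model to the sections of the localization is in general uncontrolled. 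The same gap affects your connectivity claim: $\Ao$-$(n-2)$-connectivity of the localized space does not by itself force $(n-2)$-connectivity of $\Sing(\mathbb{A}^n\setminus\{0\})(k)$.

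The missing input is exactly the theorem the paper invokes: for $n\geq 2$ the simplicial sheaf $\Sing(\mathbb{A}^n\setminus\{0\})$ is already $\Ao$-local (\cite[Theorem 8.9]{morel:book}, proved via the affine Brown--Gersten property; see also \cite[Proposition 6.6]{torsors}). Since a field is a henselian local ring, this yields $\pi_i(\Sing(\mathbb{A}^n\setminus\{0\})(k))\cong\pi_i^{\Ao}(\mathbb{A}^n\setminus\{0\})(k)$ in \emph{all} degrees, which gives both the connectivity assertion and the identification of $\pi_{n-1}$ with $K^{MW}_n(k)$. With that citation substituted for the unramifiedness argument, your proof coincides with the paper's.
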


\begin{proof}
By \cite[Theorem 6.40]{morel:book}, 
the space $\mathbb{A}^n\setminus\{0\}$ is $\Ao$-$(n-2)$-connected and
there is an isomorphism of Nisnevich sheaves of abelian groups 
$$
\pi_{n-1}^{\Ao}(\mathbb{A}^n\setminus\{0\})\cong K^{MW}_n.
$$
By  \cite[Theorem 8.9]{morel:book}, the space
$\mathbb{A}^n\setminus\{0\}$ is $\Ao$-local and hence there are isomorphisms
$$
\pi_i^{\Ao}(\mathbb{A}^n\setminus\{0\})(k)\cong
\pi_i(\Sing(\mathbb{A}^n\setminus\{0\})(k)).
$$
This implies the claims about the homotopy groups of the simplicial
set. The above argument (with slightly more details) can also be found
in \cite[Proposition 6.6]{torsors}.
\end{proof}

\begin{remark}
\label{rem:sc}
An elementary matrix $e_{ij}(\lambda)\in SL_n(k)$ is in the
path-component of the identity in $SL_n(k[\Delta^\bullet])$, the path
being given by $e_{ij}(\lambda T)\in SL_n(k[T])\cong
SL_n(k[\Delta^1])$. Since $SL_n(k)=E_n(k)$ is generated by such
elementary matrices, we have $\pi_0SL_n(k[\Delta^\bullet])=\{I\}$,
i.e., the simplicial set $SL_n(k[\Delta^\bullet])$ is connected. 
The same is true for $Sp_{2n}(k[\Delta^\bullet])$, or more generally
for the simplicial replacement of any algebraic group whose $k$-points
are generated by unipotent elements, cf. \cite{jardine:homotopy}.

As a consequence, the classifying spaces $BSL_n(k[\Delta^\bullet])$
and $BSp_{2n}(k[\Delta^\bullet])$ are simply connected. 
\end{remark}

\subsection{Stabilization theorems}

We first provide an analogue of the stabilization results of Hutchinson
and Tao \cite{hutchinson:tao} for the homology of $SL_n$ made
$\Ao$-invariant: 

\begin{proposition}
\label{prop:stabsl}
Let $k$ be an infinite field. Then the homomorphisms  
\begin{displaymath}
H_i(BSL_{n-1}(k[\Delta^\bullet]),\mathbb{Z})\rightarrow
H_i(BSL_n(k[\Delta^\bullet]),\mathbb{Z}) 
\end{displaymath}
induced by the standard inclusion $SL_{n-1}(k)\hookrightarrow SL_n(k)$
are isomorphisms for $i\leq n-2$. There is an exact sequence
\begin{displaymath}
H_n(BSL_{n-1}(k[\Delta^\bullet]),\mathbb{Z})\rightarrow
H_n(BSL_n(k[\Delta^\bullet]),\mathbb{Z})  
\rightarrow K^{MW}_n(k)\rightarrow
\end{displaymath}
\begin{displaymath}
\rightarrow 
H_{n-1}(BSL_{n-1}(k[\Delta^\bullet]),\mathbb{Z})\rightarrow
H_{n-1}(BSL_n(k[\Delta^\bullet]),\mathbb{Z})\rightarrow 0.
\end{displaymath}
\end{proposition}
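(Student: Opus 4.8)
The plan is to extract the stabilization statement from the fibre sequence of \prettyref{prop:torsors}(1) together with the identification of the fibre's homotopy type from \prettyref{prop:sphere}. First I would record that by \prettyref{prop:sphere} the fibre $F_n := \Sing(\mathbb{A}^n\setminus\{0\})(k)$ is $(n-2)$-connected with $\pi_{n-1}(F_n)\cong K^{MW}_n(k)$, and by \prettyref{rem:sc} the base and total space $B\Sing(SL_n)(k)$, $B\Sing(SL_{n-1})(k)$ are simply connected (indeed the total space $B\Sing(SL_{n-1})(k)$ is simply connected for $n\geq 3$, and one can treat the base case $n=2$ separately since $\mathbb{A}^1\setminus\{0\}$ has the homotopy type relevant there, or note $SL_1$ is trivial). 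Then I would feed the fibre sequence into the homology Serre spectral sequence (or, more cleanly, the relative Serre spectral sequence / the comparison of the fibre sequence with its induced map on homology), using that $F_n$ is highly connected to control the range in which the fibre contributes.

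The key computation is the following. Since $F_n$ is $(n-2)$-connected and simply connected for $n \geq 3$, the Hurewicz theorem gives $\tilde H_i(F_n) = 0$ for $i \leq n-2$ and $H_{n-1}(F_n)\cong \pi_{n-1}(F_n)\cong K^{MW}_n(k)$. For $n=2$ the fibre $\Sing(\mathbb{A}^1\setminus\{0\})(k) \simeq$ (the simplicial circle-like object) has $H_0 = \mathbb{Z}$, $H_1 \cong K^{MW}_1(k)$, matching the pattern. Because the base is simply connected, the Serre spectral sequence $E^2_{p,q} = H_p(\text{base}; H_q(F_n))$ has trivial local coefficients. In the range $p+q \leq n-1$ the only nonzero rows are $q=0$ and $q=n-1$, so the spectral sequence degenerates enough to yield, via the standard argument for a fibration with $(n-2)$-connected fibre, the five-term-type exact sequence
\begin{displaymath}
H_n(B\Sing(SL_{n-1})(k)) \to H_n(B\Sing(SL_n)(k)) \to H_{n-1}(F_n) \to H_{n-1}(B\Sing(SL_{n-1})(k)) \to H_{n-1}(B\Sing(SL_n)(k)) \to 0
\end{displaymath}
and isomorphisms $H_i(B\Sing(SL_{n-1})(k)) \xrightarrow{\cong} H_i(B\Sing(SL_n)(k))$ for $i \leq n-2$. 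Finally I would translate back using the weak equivalence $(B\Sing(G))(k) \simeq B(\Sing(G)(k)) = BG(k[\Delta^\bullet])$ recalled in \prettyref{sec:hoinvdef}, and substitute $H_{n-1}(F_n)\cong K^{MW}_n(k)$ from \prettyref{prop:sphere}, which gives exactly the asserted sequence and range.

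I would present the edge-map analysis carefully: the map $H_n(\text{total}) \to H_{n-1}(F_n)$ is the transgression-type boundary, and exactness at $H_{n-1}(F_n)$ requires identifying the image of the differential $d_n \colon E^n_{n,0} = H_n(\text{base}) \to E^n_{0,n-1} = H_{n-1}(F_n)$ with the cokernel map; this is the standard "Serre exact sequence" for a fibration whose fibre is $(n-2)$-connected, which one can cite or derive from the spectral sequence by inspecting that all other differentials into or out of the relevant spots vanish for degree reasons in this range. The role of the hypothesis that $k$ is infinite is only to guarantee $SL_n(k) = E_n(k)$ in \prettyref{rem:sc} (so that the classifying spaces are connected/simply connected) and to stay within the hypotheses under which \prettyref{prop:torsors} and \prettyref{prop:sphere} are stated; I would note this explicitly.

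The main obstacle, and the place to be most careful, is the low-dimensional bookkeeping: checking that the base and total spaces are simply connected (so that local coefficients are trivial and Hurewicz applies to the fibre), and handling the small cases $n=2,3$ where "$(n-2)$-connected" is a weak hypothesis — e.g. for $n=2$ the fibre is only connected, not simply connected, so one must verify directly that $\pi_1$ of the fibre is abelian and equals $K^{MW}_1(k)$ and that the five-term sequence still has the claimed form. Everything else is a routine application of the homology Serre spectral sequence to the fibre sequence of \prettyref{prop:torsors}, so the write-up will mostly consist of citing that spectral sequence, invoking \prettyref{prop:sphere} and \prettyref{rem:sc}, and reading off the exact sequence.
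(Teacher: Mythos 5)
Your argument is correct and follows the paper's own route: the same fibre sequence from \prettyref{prop:torsors}(1), the same connectivity and $\pi_{n-1}$ computation of the fibre from \prettyref{prop:sphere}, and simple connectivity from \prettyref{rem:sc}; the paper merely does the homological bookkeeping via the relative Hurewicz theorem and the long exact sequence of the pair $(B\Sing(SL_n)(k),B\Sing(SL_{n-1})(k))$ instead of the Serre spectral sequence, which sidesteps the transgression/edge-map analysis you were planning to write out. One small slip to fix: for $n=2$ the fibre is $\Sing(\mathbb{A}^2\setminus\{0\})(k)$ (with abelian $\pi_1\cong K^{MW}_2(k)$, being a loop space), not $\Sing(\mathbb{A}^1\setminus\{0\})(k)$, though your fallback observation that $SL_1$ is trivial disposes of that case anyway.
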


\begin{proof}
By \prettyref{prop:torsors} (1), we have a fibre sequence 
$$
\Sing(\mathbb{A}^n\setminus\{0\})(k)\to B\Sing(SL_{n-1})(k)\to
B\Sing(SL_n)(k).
$$
By \prettyref{rem:sc}, $B\Sing SL_{n-1}(k)$ is simply-connected. 
Since
$$\pi_{n-1}(\Sing(\mathbb{A}^n\setminus\{0\})(k))=
\pi_n(B\Sing(SL_n)(k),B\Sing(SL_{n-1})(k)),
$$
\prettyref{prop:sphere} implies that the pair
$(B\Sing(SL_n)(k),B\Sing(SL_{n-1})(k))$ is $(n-1)$-connected. By the
relative Hurewicz theorem \cite[Corollary III.3.12]{goerss:jardine} we
have a vanishing of relative homology groups of the pair: 
$$
\tilde{H}_i(B\Sing(SL_n)(k),B\Sing(SL_{n-1})(k))=0, i<n
$$
Moreover, the first non-vanishing relative homology group can be
identified as 
\begin{eqnarray*}
&&H_{n}(B\Sing(SL_n)(k),B\Sing (SL_{n-1})(k))\\
&\cong &
\pi_{n}(B\Sing(SL_n)(k),B\Sing (SL_{n-1})(k))\\
&\cong&\pi_{n-1}(\Sing(\mathbb{A}^n\setminus\{0\})(k)).
\end{eqnarray*}
Therefore, using
\prettyref{prop:sphere}, we find that the above relative homology
group $H_{n}(B\Sing(SL_n)(k),B\Sing (SL_{n-1})(k))$ can be
identified with $K^{MW}_n(k)$. The claims then are direct consequences
of the long exact sequence for relative homology of the pair
$(B\Sing(SL_n)(k),B\Sing(SL_{n-1})(k))$. 
\end{proof}

Next, we consider the symplectic groups. This is the main result we
will use in the later development.

\begin{proposition}
\label{prop:stabsp}
Let $k$ be an infinite field. Then the homomorphisms  
\begin{displaymath}
H_i(BSp_{2n-2}(k[\Delta^\bullet]),\mathbb{Z})\rightarrow
H_i(BSp_{2n}(k[\Delta^\bullet]),\mathbb{Z}) 
\end{displaymath}
induced by the standard inclusion $Sp_{2n-2}(k)\hookrightarrow
Sp_{2n}(k)$ are isomorphisms for $i\leq 2n-2$. There is an exact
sequence 
\begin{displaymath}
H_{2n}(BSp_{2n-2}(k[\Delta^\bullet]),\mathbb{Z})\rightarrow
H_{2n}(BSp_{2n}(k[\Delta^\bullet]),\mathbb{Z}) 
\rightarrow K^{MW}_{2n}(k)\rightarrow
\end{displaymath}
\begin{displaymath}
\rightarrow 
H_{2n-1}(BSp_{2n-2}(k[\Delta^\bullet]),\mathbb{Z})\rightarrow
H_{2n-1}(BSp_{2n}(k[\Delta^\bullet]),\mathbb{Z})\rightarrow 0.
\end{displaymath}
\end{proposition}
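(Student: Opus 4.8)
The plan is to run the same argument as in the proof of \prettyref{prop:stabsl}, but using the symplectic fibre sequence. First I would invoke \prettyref{prop:torsors}~(2), which gives the fibre sequence
$$
\Sing(\mathbb{A}^{2n}\setminus\{0\})(k)\to B\Sing(Sp_{2n-2})(k)\to
B\Sing(Sp_{2n})(k).
$$
By \prettyref{rem:sc}, the base $B\Sing(Sp_{2n})(k)$ and the total space $B\Sing(Sp_{2n-2})(k)$ are both simply connected (the symplectic groups are generated by unipotents), so the fibre sequence and all subsequent applications of the relative Hurewicz theorem are unobstructed by $\pi_1$-actions.

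Next I would use \prettyref{prop:sphere} with the sphere $\mathbb{A}^{2n}\setminus\{0\}$: the simplicial set $\Sing(\mathbb{A}^{2n}\setminus\{0\})(k)$ is $(2n-2)$-connected and $\pi_{2n-1}(\Sing(\mathbb{A}^{2n}\setminus\{0\})(k))\cong K^{MW}_{2n}(k)$. Via the fibre sequence, the connecting isomorphism identifies
$$
\pi_{2n-1}(\Sing(\mathbb{A}^{2n}\setminus\{0\})(k))\cong
\pi_{2n}(B\Sing(Sp_{2n})(k),B\Sing(Sp_{2n-2})(k)),
$$
so the pair $(B\Sing(Sp_{2n})(k),B\Sing(Sp_{2n-2})(k))$ is $(2n-1)$-connected. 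By the relative Hurewicz theorem \cite[Corollary III.3.12]{goerss:jardine}, the reduced relative homology groups $\tilde{H}_i$ of this pair vanish for $i<2n$, and the first non-vanishing one satisfies
$$
H_{2n}(B\Sing(Sp_{2n})(k),B\Sing(Sp_{2n-2})(k))\cong
\pi_{2n}(B\Sing(Sp_{2n})(k),B\Sing(Sp_{2n-2})(k))\cong K^{MW}_{2n}(k).
$$

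Finally, I would read off both assertions from the long exact sequence in relative homology of the pair, together with the weak equivalence $(B\Sing G)(k)\simeq B(\Sing(G)(k))=BG(k[\Delta^\bullet])$ recorded in \prettyref{sec:hoinvdef}: the vanishing of relative homology in degrees $i\le 2n-2$ gives that $H_i(BSp_{2n-2}(k[\Delta^\bullet]),\mathbb{Z})\to H_i(BSp_{2n}(k[\Delta^\bullet]),\mathbb{Z})$ is an isomorphism in that range, while the segment of the long exact sequence around degree $2n$, with the identification of the relative term with $K^{MW}_{2n}(k)$, yields the claimed five-term exact sequence. There is essentially no serious obstacle here since the argument is a verbatim transcription of the $SL_n$ case; the only points requiring care are bookkeeping — that the fibre $\mathbb{A}^{2n}\setminus\{0\}$ has dimension $2n$ (hence $(2n-2)$-connectivity and the range $i\le 2n-2$), and checking that \prettyref{prop:torsors}~(2) is stated with the indexing that makes the connecting map land in the correct degree.
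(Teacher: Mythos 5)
Your proposal is correct and is exactly the paper's argument: the paper's own proof of this proposition simply says it is the same as for \prettyref{prop:stabsl}, using the symplectic versions of \prettyref{prop:torsors} and \prettyref{rem:sc}, which is precisely the transcription you carried out. The bookkeeping (the fibre $\mathbb{A}^{2n}\setminus\{0\}$ being $(2n-2)$-connected with $\pi_{2n-1}\cong K^{MW}_{2n}(k)$, hence a $(2n-1)$-connected pair) and the reading of the long exact sequence are all as intended.
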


\begin{proof}
The proof is the same as for \prettyref{prop:stabsl} above, only using
the symplectic versions of \prettyref{prop:torsors} and
\prettyref{rem:sc}. 
\end{proof}

\begin{remark}
Similar stabilization results can be proved for $GL_n$ and some
inclusions of orthogonal groups. 
\end{remark}

\subsection{ $H_{2n}(Sp_{2n}(k),\Z)$ surjects onto $K_{2n}^{MW}(k)$}
For fields $k$ of characteristic not equal to $2$, it is shown in
\cite[Lemma 3.5, Theorem 3.9]{hutchinson:tao} that there is a  natural
homomorphism of graded $\Z[k^\times]$-algebras  
\[
\sigma_n=T_n\circ \epsilon_n:H_n(SL_n(k),\Z)\to K_n^{MW}(k), \quad n\geq 0.
\] 
Here the algebra structure on $\left(H_n(SL_n(k),\Z)\right)_{n\geq 0}$
comes from the external product 
\[
H_n(SL_n(k),\Z)\otimes H_m(SL_m(k),\Z) \to H_{n+m}(SL_{n+m}(k),\Z)) 
\]
induced by the block matrix homomorphism $SL_n(k)\times SL_n(k)\to
SL_{n+m}(k)$. 

Thus, for any $n\geq 0$, the inclusion $Sp_{2n}(k)\to SL_{2n}(k)$
induces a natural map $H_{2n}(Sp_{2n}(k),\Z)\to K_{2n}^{MW}(k)$.

\begin{lemma} 
\label{lem:sp2nkmw}
For any field $k$ of characteristic not equal to $2$,
  the natural map $H_{2n}(Sp_{2n}(k),\Z)\to K_{2n}^{MW}(k)$ is
  surjective. 
\end{lemma}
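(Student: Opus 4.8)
The plan is to use the block-sum product structure on the homology of the classical groups together with the fact that $\sigma_\bullet$ is a homomorphism of graded algebras in order to reduce the statement to the case $n=1$, which is then the surjectivity of $\sigma_2$.

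First I would set up the multiplicativity. The block-diagonal homomorphisms $Sp_2(k)^{\times n}\to Sp_{2n}(k)$ and $SL_2(k)^{\times n}\to SL_{2n}(k)$ fit into a commutative square with the standard inclusions $Sp_{2m}(k)\hookrightarrow SL_{2m}(k)$, so the map induced on homology by $Sp_{2n}(k)\hookrightarrow SL_{2n}(k)$ is multiplicative for the external products; since $Sp_2=SL_2$, it is moreover the identity in degree $2$. Combining this with the hypothesis that $\sigma_\bullet\colon H_\bullet(SL_\bullet(k),\Z)\to K^{MW}_\bullet(k)$ is an algebra homomorphism shows that the composite
$$
H_2(SL_2(k),\Z)^{\otimes n}\longrightarrow H_{2n}(Sp_{2n}(k),\Z)\longrightarrow H_{2n}(SL_{2n}(k),\Z)\xrightarrow{\ \sigma_{2n}\ }K^{MW}_{2n}(k)
$$
coincides with $\sigma_2^{\otimes n}$ followed by the $n$-fold multiplication $K^{MW}_2(k)^{\otimes n}\to K^{MW}_{2n}(k)$. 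Consequently the image of $H_{2n}(Sp_{2n}(k),\Z)\to K^{MW}_{2n}(k)$ contains every product $x_1\cdots x_n$ with each $x_i$ in the image of $\sigma_2$.

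The second step, which is where the actual work lies, is the case $n=1$: the map $\sigma_2\colon H_2(SL_2(k),\Z)\to K^{MW}_2(k)$ is surjective. I would obtain this from the analysis of $\sigma_\bullet$ in \cite{hutchinson:tao}: the Milnor--Witt Steinberg symbols $[a][b]$, and their $\eta$-multiples $\eta\,[a][b][c],\dots$, are realised by $\sigma_2$ applied to explicit $2$-cycles in $SL_2(k)$ built from the diagonal torus $\diag(a,a^{-1})$ and Weyl elements, along the lines of the Milnor--Witt refinement of Matsumoto's presentation of $K_2$. This is the main obstacle; everything else is formal.

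Granting surjectivity of $\sigma_2$, the image of $H_{2n}(Sp_{2n}(k),\Z)\to K^{MW}_{2n}(k)$ contains all products of $n$ elements of $K^{MW}_2(k)$, so it remains only to observe that such products generate $K^{MW}_{2n}(k)$ as an abelian group. This is elementary from the presentation of Milnor--Witt K-theory: for $n\geq 1$ the group $K^{MW}_{2n}(k)$ is generated by the elements $\eta^{\,q}[u_1]\cdots[u_{2n+q}]$ with $q\geq 0$ and $u_i\in k^\times$, and each such element is the product of the degree-$2$ element $\eta^{\,q}[u_1]\cdots[u_{q+2}]$ with the $n-1$ degree-$2$ elements $[u_{q+3}][u_{q+4}],\ \dots,\ [u_{2n+q-1}][u_{2n+q}]$. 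Hence the image is all of $K^{MW}_{2n}(k)$, which proves the lemma.
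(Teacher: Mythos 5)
Your proposal is correct and follows essentially the same route as the paper: reduce to the case $n=1$ via the block-sum multiplicativity of $\sigma_\bullet$ and the observation that $K^{MW}_{2n}(k)$ is additively generated by products of degree-$2$ elements. The only real difference is that the step you flag as ``the main obstacle'' requires no work: the map $H_2(SL_2(k),\Z)=H_2(Sp_2(k),\Z)\to K^{MW}_2(k)$ is already an \emph{isomorphism} by Theorem~3.10 of the cited Hutchinson--Tao paper, and the paper's proof simply quotes that fact.
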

\begin{proof}
Since $Sp_2(k)=SL_2(k)$, the map $H_2(Sp_2(k),\Z)\to K_2^{MW}(k)$ is
an isomorphism, cf. \cite[Theorem 3.10]{hutchinson:tao}. There are
natural group homomorphisms $Sp_{2n}(k)\times Sp_{2m}(k)\to
Sp_{2n+2m}(k)$, and hence, for any $n$ a homomorphism 
\[
\underbrace{Sp_2(k)\times\cdots\times Sp_2(k)}_n\to Sp_{2n}(k).
\]  
These maps induce a commutative diagram 
\[
\xymatrix{
H_2(Sp_2(k),\Z)\otimes \cdots \otimes H_2(Sp_2(k),\Z)\ar[r]\ar[d]^-{\cong}
&
H_{2n}(Sp_{2n}(k),\Z)\ar[d]\\
H_2(SL_2(k),\Z)\otimes \cdots \otimes H_2(SL_2(k),\Z)\ar[r]\ar[d]^-{\cong}
&
H_{2n}(SL_{2n}(k),\Z)\ar[d]\\
K_2^{MW}(k)\otimes \cdots \otimes K_2^{MW}(k)\ar[r] 
&
K_{2n}^{MW}(k)\\
}
\]

Finally, since $K_n^{MW}(k)$ is additively generated  by
products $[a_1]\cdots[a_n]$, $[a_i]\in K_1^{MW}(k)$, the lowest
horizontal arrow in this diagram is a surjection.  
\end{proof}

\subsection{Injective stabilization for symplectic groups}

We will provide an improvement of the stabilization result
\prettyref{prop:stabsp} for the symplectic groups, using the
surjectivity statement of \prettyref{lem:sp2nkmw}. The main ingredient
is the following comparison between stabilization morphisms for
discrete and simplicial groups. 

\begin{proposition}
\label{prop:surj}
Let $k$ be a field of characteristic $0$. 
\begin{enumerate}
\item 
There is a commutative diagram 
\begin{center}
\begin{minipage}[c]{10cm}
\xymatrix{
H_{n}(SL_{n}(k),\mathbb{Z}) \ar[rr] \ar[rd]_{T_n\circ\epsilon_n} & &
H_{n}(BSL_{n}(k[\Delta^\bullet]),\mathbb{Z}) \ar[ld] \\
& K^{MW}_{n}(k),
}
\end{minipage}
\end{center}
where the top morphism is the change of topology, the left descending
morphism is the one of \cite{hutchinson:tao} and the right descending
morphism is the one from the stabilization sequence of
\prettyref{prop:stabsl}. 
\item There is a commutative diagram 
\begin{center}
\begin{minipage}[c]{10cm}
\xymatrix{
H_{2n}(Sp_{2n}(k),\mathbb{Z}) \ar[rr] \ar[rd]_{T_n\circ\epsilon_n\circ
  \iota} & &
H_{2n}(BSp_{2n}(k[\Delta^\bullet]),\mathbb{Z}) \ar[ld] \\
& K^{MW}_{2n}(k),
}
\end{minipage}
\end{center}
where the top morphism is the change of topology, the left descending
morphism is the one of \prettyref{lem:sp2nkmw} and the right descending
morphism is the one from the stabilization sequence of
\prettyref{prop:stabsp}. 
\end{enumerate}
\end{proposition}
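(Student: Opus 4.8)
The plan is to establish statement (1) directly and then deduce (2) from it by naturality. Throughout, write $X_m:=B\Sing(SL_m)(k)$; by \prettyref{rem:sc} this is a simply connected simplicial set, and it is weakly equivalent to $BSL_m(k[\Delta^\bullet])$. The first point is that both descending maps in the triangle of (1) factor through relative homology of a pair of classifying spaces. On the one hand, by its construction in \cite[Lemma 3.5, Theorem 3.9]{hutchinson:tao} the Hutchinson--Tao map $\sigma_n=T_n\circ\epsilon_n$ is obtained from the $SL_n(k)$-action on the complex of unimodular rows in $k^n$; in particular it factors through the relative homology $H_n(SL_n(k),SL_{n-1}(k);\Z)$ of the pair $\bigl(BSL_n(k),BSL_{n-1}(k)\bigr)$, with $\epsilon_n$ the natural map from the long exact sequence of the pair. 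On the other hand, inspecting the proof of \prettyref{prop:stabsl}, the right descending map $H_n(X_n,\Z)\to K^{MW}_n(k)$ is the composite of the canonical map $H_n(X_n,\Z)\to H_n(X_n,X_{n-1};\Z)$, the relative Hurewicz isomorphism $H_n(X_n,X_{n-1};\Z)\cong\pi_{n-1}(\Sing(\mathbb{A}^n\setminus\{0\})(k))$, and the isomorphism with $K^{MW}_n(k)$ of \prettyref{prop:sphere}.

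The change-of-topology morphisms $BSL_m(k)\to X_m$ assemble into a morphism of pairs $\bigl(BSL_n(k),BSL_{n-1}(k)\bigr)\to(X_n,X_{n-1})$ and hence a map of the associated long exact homology sequences; in particular the square
\[
\xymatrix{
H_n(SL_n(k),\Z) \ar[r] \ar[d] & H_n(SL_n(k),SL_{n-1}(k);\Z) \ar[d] \\
H_n(X_n,\Z) \ar[r] & H_n(X_n,X_{n-1};\Z)
}
\]
commutes, with horizontal maps the canonical ones and vertical maps induced by change of topology. Granting the two factorizations above, proving (1) thus reduces to checking that the composite
\[
H_n(SL_n(k),SL_{n-1}(k);\Z)\longrightarrow H_n(X_n,X_{n-1};\Z)\xrightarrow{\ \cong\ }K^{MW}_n(k)
\]
agrees with $T_n$.

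This is the crux. Both relative homology groups are governed by an action on unimodular rows: in \cite{hutchinson:tao} one computes $H_*(SL_n(k),SL_{n-1}(k);\Z)$ via the spectral sequence of the $SL_n(k)$-action on the highly connected simplicial complex of unimodular rows in $k^n$, and $T_n$ is read off the bottom of that spectral sequence via a symbolic presentation of $K^{MW}_n(k)$; in the $\Ao$-setting the fibre sequence of \prettyref{prop:torsors}(1) together with \prettyref{prop:sphere} identifies $H_n(X_n,X_{n-1};\Z)$ with $\pi_{n-1}(\Sing(\mathbb{A}^n\setminus\{0\})(k))\cong K^{MW}_n(k)$ through Morel's computation \cite{morel:book}. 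The bridge is that the set $(\mathbb{A}^n\setminus\{0\})(k)$ of unimodular rows is exactly the set of $0$-simplices of $\Sing(\mathbb{A}^n\setminus\{0\})(k)$, and this inclusion is equivariant for $SL_n(k)\to SL_n(k[\Delta^\bullet])$; it therefore induces a comparison of the two spectral sequences that realizes the right-hand vertical map of the above square, and the two identifications with $K^{MW}_n(k)$ then coincide because the symbolic presentation used in \cite{hutchinson:tao} is precisely Morel's. Alternatively, one may reduce the comparison to multiplicative generators -- both constructions respect the graded products given by block sum of matrices on one side and by composition of the $\Ao$-fibre sequences on the other -- with the base case $n=2$ (where $SL_1=\{1\}$, so $X_1$ is contractible and $H_2(SL_2(k),\Z)\cong K^{MW}_2(k)\cong H_2(X_2,\Z)$, cf. \cite[Theorem 3.10]{hutchinson:tao}) handled directly.

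Finally, (2) follows from (1). The inclusion $Sp_{2n}\hookrightarrow SL_{2n}$ is equivariant for the standard action on $\mathbb{A}^{2n}\setminus\{0\}$, so the symplectic fibre sequence of \prettyref{prop:torsors}(2) maps to the linear fibre sequence $\Sing(\mathbb{A}^{2n}\setminus\{0\})(k)\to B\Sing(SL_{2n-1})(k)\to B\Sing(SL_{2n})(k)$, inducing the identity on the fibre. Hence the right descending map of (2) equals the composite of $H_{2n}(BSp_{2n}(k[\Delta^\bullet]),\Z)\to H_{2n}(BSL_{2n}(k[\Delta^\bullet]),\Z)$ with the right descending map of (1); and since the left descending map of (2) is by definition $\sigma_{2n}\circ\iota$ with $\iota\colon H_{2n}(Sp_{2n}(k),\Z)\to H_{2n}(SL_{2n}(k),\Z)$ (\prettyref{lem:sp2nkmw}) and change of topology is natural in the group, (2) is obtained by pulling the commutative triangle of (1) for $SL_{2n}$ back along $Sp_{2n}\hookrightarrow SL_{2n}$. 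The main obstacle in all of this is the comparison in the crux step -- reconciling Hutchinson--Tao's combinatorial $T_n$ with Morel's homotopy-theoretic boundary map so as to see that they give the same isomorphism onto $K^{MW}_n(k)$ after change of topology; the remainder is formal diagram chasing with long exact sequences of pairs and naturality.
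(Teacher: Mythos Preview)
Your deduction of (2) from (1) via naturality under $Sp_{2n}\hookrightarrow SL_{2n}$ is correct and matches the paper's argument. The gap is in the ``crux'' step for (1).

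The assertion that ``the two identifications with $K^{MW}_n(k)$ then coincide because the symbolic presentation used in \cite{hutchinson:tao} is precisely Morel's'' is not a proof. Morel's isomorphism $\pi_{n-1}(\mathbb{A}^n\setminus\{0\})\cong K^{MW}_n$ in \cite{morel:book} is built by abstract $\Ao$-homotopical means (contractions of strongly $\Ao$-invariant sheaves, the Hopf element $\eta$), not by writing down generators and relations coming from unimodular rows; that the change-of-topology map carries the combinatorial $T_n$ of \cite{hutchinson:tao} to this isomorphism is exactly what (1) asserts, so you are assuming the conclusion. Your alternative via multiplicativity fares no better: you do not establish that the $\Ao$-stabilization map $H_n(X_n,\Z)\to K^{MW}_n(k)$ is multiplicative for the block-sum product, and even granting that, the subring of $K^{MW}_\bullet(k)$ generated by $K^{MW}_2(k)$ only hits even degrees, so the base case $n=2$ cannot settle odd $n$.

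The paper avoids this direct comparison altogether. It makes a parity case distinction and exploits \emph{two consecutive} stabilization maps: by \cite[Lemma 3.5]{asok:fasel:spheres}, the composite $K^{MW}_{n+1}(k)\to H_n(X_n,\Z)\to K^{MW}_n(k)$ equals $0$ when $n$ is odd and multiplication by $\eta$ when $n$ is even. For odd $n=2i+1$ this vanishing lets one factor the right descending map through $H_{2i+1}(X_{2i+2})$, and since the left vertical stabilization $H_{2i+1}(SL_{2i+1}(k))\to H_{2i+1}(SL_{2i+2}(k))$ is an isomorphism by \cite[Corollary 6.12]{hutchinson:tao}, the question is pushed to the stable range where change of topology is an isomorphism (homotopy invariance of K-theory) and both maps factor through Suslin's homomorphism. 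For even $n=2i$ one first checks agreement on the subgroup $I^{2i+1}(k)\hookrightarrow H_{2i}(SL_{2i}(k))$ using \cite[Corollary 6.13]{hutchinson:tao} and the identification of the connecting composite with $\eta$; then, modulo $I^{2i+1}$, the target becomes $K^M_{2i}(k)$ and the comparison again stabilises to $SL_\infty$, where it is settled by Suslin's map. The point is that the paper never identifies the relative group $H_n(X_n,X_{n-1};\Z)$ with $K^{MW}_n$ \emph{and then} matches it against $T_n$; it reduces everything to the stable range where the comparison is classical.
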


\begin{proof}
We first show that (1) implies (2). 
Consider the following commutative diagram:
\begin{center}
\begin{minipage}[c]{10cm}
\xymatrix{
H_{2n}(Sp_{2n}(k),\mathbb{Z}) \ar[r] \ar[d] &
H_{2n}(BSp_{2n}(k[\Delta^\bullet]),\mathbb{Z}) \ar[r] \ar[d] &
K^{MW}_{2n}(k) \ar[d]^=\\
H_{2n}(SL_{2n}(k),\mathbb{Z}) \ar[r]  &
H_{2n}(BSL_{2n}(k[\Delta^\bullet]),\mathbb{Z}) \ar[r] &
K^{MW}_{2n}(k) 
}
\end{minipage}
\end{center}
The right square is part of a commutative ladder of stabilization
sequences arising as in \cite[Section 3, after Lemma 3.1]{asok:fasel}
from the inclusion $Sp_{2n}\hookrightarrow 
SL_{2n}$ and the subsequent isomorphism of quotients
$SL_{2n}/SL_{2n-1}\cong Sp_{2n}/Sp_{2n-2}$. The left square is simply
induced by the respective inclusions of groups and change-of-topology
maps. Our goal is to show that the top composition
$H_{2n}(Sp_{2n}(k),\mathbb{Z})\rightarrow K^{MW}_{2n}(k)$ is the map
from \prettyref{lem:sp2nkmw}. But this is a consequence of the
commutativity and (1). 

To show (1), we  use the stabilization results above,
in a manner analogous to the arguments in \cite[Section
3]{asok:fasel:spheres}. There is a case distinction, based on the parity of
$n$. In the case $n=2i+1$, we consider the following diagram:
\begin{center}
\begin{minipage}[c]{10cm}
\xymatrix{
& K^{MW}_{2i+2}(k) \ar[d]^\alpha \ar[rd]^0 \\
H_{2i+1}(SL_{2i+1}(k),\mathbb{Z}) \ar[r]_{\iota_{2i+1}} \ar[d]_\cong  & 
H_{2i+1}(BSL_{2i+1}(k[\Delta^\bullet]),\mathbb{Z})\ar[d] 
\ar[r]_{\phantom{sorry}\sigma} & K^{MW}_{2i+1}(k)\\
H_{2i+1}(SL_{2i+2}(k),\mathbb{Z}) \ar[r]_{\iota_{2i+2}} & 
H_{2i+1}(BSL_{2i+2}(k[\Delta^\bullet]),\mathbb{Z}) \ar@{.>}[ru]
}
\end{minipage}
\end{center}
The left vertical map is the stabilization map of
\cite{hutchinson:tao}, which is an isomorphism by \cite[Corollary
6.12]{hutchinson:tao}. The middle column is a part of the
stabilization exact sequence of \prettyref{prop:stabsl}.  The morphism
$\sigma$ is the map we want to compare with $T_n\circ\epsilon_n$. 
Note that the map $\alpha$ is the one from the stabilization
sequence for the inclusion $SL_{2i+1}\hookrightarrow SL_{2i+2}$, while
the morphism $\sigma$ is the one induced from the stabilization
sequence for the inclusion $SL_{2i}\hookrightarrow SL_{2i+1}$. The
composition $\sigma\circ\alpha:K^{MW}_{2i+2}(k)\rightarrow
K^{MW}_{2i+1}(k)$ is then the same as the corresponding composition in
the stabilization sequences for $\Ao$-homotopy groups of \cite[Theorem
6.8]{torsors}. This morphism has been identified as  $0$ in
\cite[Lemma 3.5]{asok:fasel:spheres}, hence $\sigma$ extends through
$H_{2i+1}(BSL_{2i+2}(k[\Delta^\bullet]),\mathbb{Z})$ as claimed. The
question reduces to commutativity of the same diagram for $n=\infty$,
i.e. the stable case.

In the case $n=2i$, we consider the following diagram: 
\begin{center}
\begin{minipage}[c]{10cm}
\xymatrix{
& K^{MW}_{2i+1}(k) \ar[dd]^\alpha \ar@{.>}[rd]^\beta & 0\ar[d]\\
I^{2i+1}(k)\ar@{.>}[rr]^{=\phantom{very sorry}} \ar[d]_\gamma  &  &
I^{2i+1}(k)\ar[d]\\ 
H_{2i}(SL_{2i}(k),\mathbb{Z}) \ar[r]_{\iota_{2i}} \ar[d]  & 
H_{2i}(BSL_{2i}(k[\Delta^\bullet]),\mathbb{Z})\ar[d] 
\ar[r]_{\phantom{sorry}\sigma} & K^{MW}_{2i}(k) \ar[d]\\
H_{2i}(SL_{2i+1}(k),\mathbb{Z}) \ar[r]_{\iota_{2i+1}} \ar[d] & 
H_{2i}(BSL_{2i+1}(k[\Delta^\bullet]),\mathbb{Z}) \ar[d] \ar@{.>}[r]& 
K^{M}_{2i}(k)\ar[d] \\
0&0&0
}
\end{minipage}
\end{center}
The right vertical column is one of the standard exact sequences for
Milnor-Witt K-theory, cf. \cite[Section 5]{morel:puissance}. The left vertical
column is the exact sequence from the Hutchinson-Tao stabilization
theorem \cite[Corollary 6.13]{hutchinson:tao}. The middle column is the
stabilization exact sequence of \prettyref{prop:stabsl}. The diagram
without the dotted arrows is commutative, the only square is obviously
commutative. 

Note that the map $\alpha$ is the one from the stabilization
sequence for the inclusion $SL_{2i}\hookrightarrow SL_{2i+1}$, while
the morphism $\sigma$ is the one induced from the stabilization
sequence for the inclusion $SL_{2i-1}\hookrightarrow SL_{2i}$. The
composition $\sigma\circ\alpha:K^{MW}_{2i+1}(k)\rightarrow
K^{MW}_{2i}(k)$ is then the same as the corresponding composition in
the stabilization sequences for $\Ao$-homotopy groups of \cite[Theorem
6.8]{torsors}. This morphism has been identified as  $\eta$ in
\cite[Lemma 3.5]{asok:fasel:spheres}, hence $\alpha$ factors
through $\beta$ as claimed. 

The composition $\sigma\circ \iota_{2i}\circ\gamma$ has been
identified in \cite[Corollary 6.13]{hutchinson:tao} as the canonical
inclusion $I^{2i+1}(k)\hookrightarrow K^{MW}_{2i}(k)$ arising from
$\eta:K^{MW}_{2i+1}(k)\rightarrow K^{MW}_{2i}(k)$. Hence, we have the
dotted equality arrow in the diagram. This means that on the subgroup
$I^{2i+1}\hookrightarrow H_{2i}(SL_{2i}(k),\mathbb{Z})$, the two maps
- change of topology plus stabilization and $T_{2i}\circ\epsilon_{2i}$
agree. To show that they agree on all of
$H_{2i}(SL_{2i}(k),\mathbb{Z})$, we need to consider the bottom
row. The dotted arrow exists, since $K^{MW}_{2i+1}(k)\rightarrow
K^{MW}_{2i}(k)$ has been identified with $\eta$. 

There is another map $H_{2i}(SL_{2i+1}(k),\mathbb{Z})\rightarrow
K^M_{2i}(k)$ induced from
$T_{2i}\circ\epsilon_{2i}:H_{2i}(SL_{2i}(k),\mathbb{Z})\rightarrow
K^{MW}_{2i}(k)$ modulo $I^{2i+1}$. It now suffices to identify this
map with the bottom composition in the big diagram.

To show that the bottom composition agrees with
$T_{2i}\circ\epsilon_{2i}$ modulo $I^{2i+1}$, it suffices to check
this after stabilization to $SL_\infty$. We have a commutative diagram
\begin{center}
\begin{minipage}[c]{10cm}
\xymatrix{
H_{2i}(SL_{2i+1}(k),\mathbb{Z}) \ar[r] \ar[d]^\cong & 
H_{2i}(BSL_{2i+1}(k[\Delta^\bullet]),\mathbb{Z}) \ar[d]_\cong \\
H_{2i}(SL_\infty(k),\mathbb{Z}) \ar[r]_\cong  & 
H_{2i}(BSL_{\infty}(k[\Delta^\bullet]),\mathbb{Z}) 
}
\end{minipage}
\end{center}
The left isomorphism is a case of \cite[Theorem
1.1]{hutchinson:tao}, the right isomorphism is a case of
\prettyref{prop:stabsl}, and the bottom isomorphism is a consequence
of homotopy invariance of algebraic K-theory.
After stabilization, we see that both maps
$H_{2i}(SL_\infty(k),\mathbb{Z})\rightarrow
K^M_{2i}(k)$  
and $H_{2i}(BSL_{\infty}(k[\Delta^\bullet]),\mathbb{Z})\rightarrow
K^M_{2i}(k)$ factor through Suslin's homomorphism
$H_{2i}(BGL_\infty(k[\Delta^\bullet]),\mathbb{Z})\rightarrow
K^M_{2i}(k)$ and the respective inclusion, cf. also the discussion
after \cite[Lemma 3.1]{asok:fasel}. 
The result is proved.
\end{proof}

The following is now an obvious consequence of
\prettyref{prop:stabsp}, \prettyref{lem:sp2nkmw} and
\prettyref{prop:surj}. 

\begin{corollary}
\label{cor:stabh3}
Let $k$ be a field of characteristic $0$. 
\begin{enumerate}
\item 
In the stabilization sequence for the special linear groups,
cf. \prettyref{prop:stabsl}, the morphism  
$$
H_{2n}(BSL_{2n}(k[\Delta^\bullet]),\mathbb{Z}) 
\rightarrow K^{MW}_{2n}(k)
$$
is surjective. Hence, the standard inclusion $SL_{2n-1}\hookrightarrow
SL_{2n}$ induces an isomorphism 
$$
H_{2n-1}(BSL_{2n-1}(k[\Delta^\bullet]),\mathbb{Z})\stackrel{\cong}{\longrightarrow}
H_{2n-1}(BSL_{2n}(k[\Delta^\bullet]),\mathbb{Z})\cong 
H_{2n-1}(BSL_{\infty}(k)).
$$
\item
In the stabilization sequence for the symplectic groups,
cf. \prettyref{prop:stabsp}, the morphism  
$$
H_{2n}(BSp_{2n}(k[\Delta^\bullet]),\mathbb{Z}) 
\rightarrow K^{MW}_{2n}(k)
$$
is surjective. Hence, the standard inclusion $Sp_{2n-2}\hookrightarrow
Sp_{2n}$ induces an isomorphism 
$$
H_{2n-1}(BSp_{2n-2}(k[\Delta^\bullet]),\mathbb{Z})\stackrel{\cong}{\longrightarrow}
H_{2n-1}(BSp_{2n}(k[\Delta^\bullet]),\mathbb{Z})\cong 
H_{2n-1}(BSp_{\infty}(k)).
$$
\end{enumerate}
\end{corollary}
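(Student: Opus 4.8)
The plan is to derive \prettyref{cor:stabh3} as a formal consequence of the two stabilization exact sequences together with the surjectivity of the maps from discrete homology established in \prettyref{lem:sp2nkmw} and \prettyref{prop:surj}. First I would treat part (2), since part (1) is entirely analogous (in fact slightly easier, as it does not need the detour through the symplectic inclusion). The key observation is that in the commutative triangle of \prettyref{prop:surj}(2), the left descending arrow $H_{2n}(Sp_{2n}(k),\mathbb{Z})\to K^{MW}_{2n}(k)$ is surjective by \prettyref{lem:sp2nkmw}. Since this surjection factors through the change-of-topology map $H_{2n}(Sp_{2n}(k),\mathbb{Z})\to H_{2n}(BSp_{2n}(k[\Delta^\bullet]),\mathbb{Z})$ followed by the stabilization map $H_{2n}(BSp_{2n}(k[\Delta^\bullet]),\mathbb{Z})\to K^{MW}_{2n}(k)$, the latter map must itself be surjective. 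This is the entire content of the first assertion in each part.

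Next I would feed this surjectivity into the five-term exact sequence from \prettyref{prop:stabsp}:
\[
H_{2n}(BSp_{2n-2}(k[\Delta^\bullet]))\to H_{2n}(BSp_{2n}(k[\Delta^\bullet]))\to K^{MW}_{2n}(k)\to H_{2n-1}(BSp_{2n-2}(k[\Delta^\bullet]))\to H_{2n-1}(BSp_{2n}(k[\Delta^\bullet]))\to 0.
\]
Surjectivity of the second arrow forces the connecting map $K^{MW}_{2n}(k)\to H_{2n-1}(BSp_{2n-2}(k[\Delta^\bullet]))$ to be zero, so exactness immediately gives that $H_{2n-1}(BSp_{2n-2}(k[\Delta^\bullet]))\to H_{2n-1}(BSp_{2n}(k[\Delta^\bullet]))$ is both injective and surjective, hence an isomorphism. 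Iterating this isomorphism up the symplectic tower (or invoking that the stabilization range in \prettyref{prop:stabsp} eventually covers degree $2n-1$ for all larger ranks) identifies $H_{2n-1}(BSp_{2n}(k[\Delta^\bullet]))$ with the stable group $H_{2n-1}(BSp_\infty(k[\Delta^\bullet]))$, which by homotopy invariance of Hermitian/symplectic K-theory (or simply because $Sp_\infty(k[\Delta^\bullet])$ computes the same as the discrete $Sp_\infty(k)$ in this range) agrees with $H_{2n-1}(BSp_\infty(k))$ as written. The argument for part (1) is identical, using \prettyref{prop:stabsl}, \prettyref{prop:surj}(1) and the fact that $H_n(SL_n(k),\mathbb{Z})\to K^{MW}_n(k)$ is surjective — which for $n=2n$ even again follows from \prettyref{lem:sp2nkmw} via the inclusion $Sp_{2n}\hookrightarrow SL_{2n}$, or directly from \cite{hutchinson:tao}.

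I do not expect any real obstacle here: the corollary is genuinely a diagram chase, and every nontrivial input — the stabilization sequences, the triangle of \prettyref{prop:surj}, the surjectivity of \prettyref{lem:sp2nkmw} — has already been established. The only point requiring a little care is the passage to the stable group and the identification $H_{2n-1}(BSp_{\infty}(k[\Delta^\bullet]))\cong H_{2n-1}(BSp_\infty(k))$: this needs the homotopy invariance of the relevant stable K-theory, exactly as the analogous statement $H_{2i}(BSL_\infty(k[\Delta^\bullet]))\cong H_{2i}(BSL_\infty(k))$ was used at the end of the proof of \prettyref{prop:surj}. Beyond that, everything is formal, so the write-up is short.
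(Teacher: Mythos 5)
Your argument is correct and is exactly the intended one: the paper derives this corollary as a direct consequence of \prettyref{prop:stabsl}/\prettyref{prop:stabsp}, \prettyref{lem:sp2nkmw} and \prettyref{prop:surj}, precisely via the factorization of the surjection $H_{2n}(Sp_{2n}(k),\mathbb{Z})\to K^{MW}_{2n}(k)$ through the change-of-topology map, which kills the connecting homomorphism in the five-term sequence. Your handling of the remaining stabilization steps (using the isomorphism range for larger rank and homotopy invariance of the stable theories) matches what the paper implicitly uses.
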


\begin{remark}
We recall the computation of the homotopy group $\pi_2^{\Ao}(SL_2)$ from
\cite[Theorem 3]{asok:fasel}. For an infinite perfect field of
characteristic $\neq 2$, there is an exact sequence
$$
0\rightarrow T_4'(k)\rightarrow \pi_2^{\Ao}(SL_2)(\Spec
k)\rightarrow KSp_3(k)\rightarrow 0.
$$
The group $T_4'(k)$ sits in an exact sequence
$$
I^5(k)\rightarrow T_4'(k)\rightarrow S_4'(k)\rightarrow 0,
$$
where $I^5(k)$ is the fifth power of the fundamental ideal of the Witt
ring $W(k)$ and there is a surjection $K_4^M(k)/12\twoheadrightarrow
S_4'(k)$.

Comparing with our stabilization result above, we find that the group
$T_4'$ lies in the kernel of the Hurewicz homomorphism
$$
\pi_3^{\Ao}(BSL_2)(k)\cong \pi_3(BSL_2(k[\Delta^\bullet]))\rightarrow 
H_3(BSL_2(k[\Delta^\bullet]),\mathbb{Z}),
$$
at least if the field $k$ has characteristic $0$. 
\end{remark}

\begin{remark}
We want to point out that a stabilization result like
\prettyref{prop:stabsp} can not be true for the discrete groups over
arbitrary fields. Assume that there were  isomorphisms 
$$
H_i(Sp_{2n-2}(k),\mathbb{Z})\cong
H_i(Sp_{2n}(k),\mathbb{Z}) 
$$
for $i\leq 2n-2$ and exact sequences
$$
H_{2n}(Sp_{2n}(k),\mathbb{Z})\to K^{MW}_{2n}(k)\to
H_{2n-1}(Sp_{2n-2}(k),\mathbb{Z})\to
H_{2n-1}(Sp_{2n}(k),\mathbb{Z})\to 0.
$$
Then  by \prettyref{lem:sp2nkmw} $H_3(SL_2(k),\mathbb{Z})\cong
H_3(Sp_\infty(k),\mathbb{Z})$. Together with the stabilization result 
\prettyref{cor:stabh3} above and homotopy invariance for the infinite
symplectic group \cite{karoubi}, this would imply
$H_3(SL_2(k),\mathbb{Z})\cong
H_3(BSL_2(k[\Delta^\bullet]),\mathbb{Z})$. But there are fields for
which  such an isomorphism can not exist, by our
\prettyref{thm:main} which we will prove in the rest of the paper. The
exact form of the optimal 
stabilization result for the symplectic groups seems not yet known.
\end{remark}

\section{The third homology of \texorpdfstring{$SL_2$}{SL2} and
  indecomposable \texorpdfstring{$K_3$}{K3}}\label{sec:indk3} 
Suslin has shown in \cite[Corollary 5.2]{suslin:k3} that for any field
$k$, the Hurewicz homomorphism  
$K_3(k)\to H_3(GL_\infty(k))$ induces an isomorphism
\[
H_3(SL_\infty(k))\cong \frac{K_3(k)}{\{ -1\} \cdot K_2(k)}.
\]
It follows that there is a natural induced surjective map 
\[
H_3(SL_\infty(k))\to \indk{k}:=\frac{K_3(k)}{K_3^M(k)}.
\]
Let $\gamma$ denote the induced composite map
\[
H_3(SL_2(k))\to H_3(SL_\infty(k))\to \indk{k}.
\]
\begin{lemma} 
\label{lem:alpha}
 Let $k$ be an infinite field.
\begin{enumerate}
\item $\gamma$ is surjective.
\item $\gamma$ induces an isomorphism
\[
H_3(SL_2(k),\Z[1/2])\otimes_{\grsc{k}}\Z\cong\indk{k}\otimes\Z[1/2]
\]
\item If $k^\times=(k^\times)^2$ then $\gamma$ induces an isomorphism 
\[
H_3(SL_2(k),\Z)\cong \indk{k}.
\]
\end{enumerate}
\end{lemma}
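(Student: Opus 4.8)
The plan is to analyze the map $\gamma$ using the known structure of $H_3(SL_2(k),\Z[1/2])$ as a module over the group ring $\grsc{k}$, together with Suslin's stability and computation of $H_3(SL_\infty)$ quoted just above the statement. For part (1), surjectivity of $\gamma$, I would argue as follows: the Hurewicz map $K_3(k)\to H_3(GL_\infty(k))$ and Suslin's isomorphism $H_3(SL_\infty(k))\cong K_3(k)/\{-1\}\cdot K_2(k)$ show that $\indk{k}$ is generated by the image of $H_3(SL_\infty(k))$; so it suffices to see that the stabilization map $H_3(SL_2(k))\to H_3(SL_\infty(k))$ hits everything that survives to $\indk{k}$. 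In fact one expects $H_3(SL_2(k))\to H_3(SL_3(k))$ to already be surjective (this is in the stable range for $SL$ in this degree, by Hutchinson--Tao-type stability, since $SL_3/SL_2\cong \mathbb{A}^3\setminus\{0\}$ is highly connected), and $H_3(SL_3(k))\to H_3(SL_\infty(k))$ is surjective by Suslin's stability theorem. Composing, $\gamma$ is surjective.

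For parts (2) and (3) the strategy is to identify the kernel of $\gamma$ with the ``augmentation-type'' part of the module structure. The key input is that after inverting $2$, $H_3(SL_2(k),\Z[1/2])$ carries a natural $\grsc{k}$-module structure (coming from the conjugation action of diagonal matrices, i.e. the scissors-congruence/Bloch-group formalism of \cite{hutchinson:rb,hutchinson:bw}), and that the composite $H_3(SL_2(k),\Z[1/2])\to H_3(SL_\infty(k))\otimes\Z[1/2]$ kills the action of $\grsc{k}$ because in $H_3(SL_\infty(k))$ all units act trivially — a diagonal matrix $\diag(u,u^{-1})\in SL_2$ becomes, after stabilizing to $SL_4$ (adding a block $\diag(u^{-1},u)$), conjugate to the identity, or more precisely homologous to it via the standard Whitehead-lemma argument. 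Hence $\gamma$ factors through the coinvariants $H_3(SL_2(k),\Z[1/2])\otimes_{\grsc{k}}\Z$. For (2), one then needs the reverse: that the induced map $H_3(SL_2(k),\Z[1/2])\otimes_{\grsc{k}}\Z\to \indk{k}\otimes\Z[1/2]$ is injective. This should follow from the explicit description of $H_3(SL_2(k),\Z[1/2])$ via the refined Bloch group together with Suslin's computation: the refined Bloch group $\rbl{k}$ surjects onto the Bloch group $\rm{B}(k)$, whose coinvariants compute (a $\Z[1/2]$-version of) $\indk{k}$, and the scissors-congruence relations that are quotiented out in passing from the refined to the ordinary Bloch group are exactly the image of the augmentation ideal of $\grsc{k}$. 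So the map on coinvariants is an isomorphism.

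Part (3) is the special case where $k^\times=(k^\times)^2$: then $\grsc{k}=\Z$, the module structure is trivial, coinvariants do nothing, and moreover $2$ is already invertible in the relevant sense is not automatic — but when every unit is a square the group ring is trivial and one checks directly that the $2$-torsion obstruction vanishes, so (2) upgrades to an isomorphism $H_3(SL_2(k),\Z)\cong\indk{k}$ with integral coefficients. Concretely I would invoke that for quadratically closed $k$ the refined Bloch group coincides with the Bloch group and $H_3(SL_2(k),\Z)\cong \rm{B}(k)\cong \indk{k}$ (Sah, Dupont--Sah, Suslin), possibly needing $\mathrm{char}\,k=0$ or some mild hypothesis, and the statement follows.

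\emph{Main obstacle.} The delicate point is part (2): showing that $\gamma$ induces an \emph{isomorphism} on coinvariants, not merely a surjection. Surjectivity of $\gamma$ and the factorization through coinvariants are comparatively soft; the injectivity requires matching up the precise defining relations of the (refined) Bloch group with the augmentation ideal action, and controlling the $2$-primary discrepancy between $H_3(SL_2)$, the Bloch group, and $\indk{k}$ — which is exactly why one must invert $2$. This is where the detailed computations of \cite{hutchinson:rb,hutchinson:bw,hutchinson:loc} (and Suslin's work) are essential, rather than formal homotopy-theoretic input.
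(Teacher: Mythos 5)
The paper disposes of all three parts by citation: (1) is \cite[Lemma 5.1]{hutchinson:h3sl} and (2), (3) are \cite[Proposition 6.4 (ii), (iii)]{mirzaii:h3gl}, so any reconstruction is doing more than the paper asks. Your reconstruction of (1), however, contains a genuine error. Degree $3$ is \emph{not} in the stable range for the inclusion $SL_2\hookrightarrow SL_3$: Hutchinson--Tao stability gives isomorphisms $H_i(SL_{n-1}(k))\to H_i(SL_n(k))$ only for $i\le n-2$ and surjectivity for $i=n-1$, and neither applies to $i=3$, $n=3$. In fact the main theorem of \cite{hutchinson:h3sl} is precisely that the cokernel of $H_3(SL_2(k),\Z)\to H_3(SL_3(k),\Z)\cong H_3(SL_\infty(k),\Z)$ is isomorphic to $K_3^M(k)$, which is nonzero for essentially every infinite field. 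So your argument, if it worked, would prove that $H_3(SL_2(k))$ surjects onto $H_3(SL_\infty(k))=K_3(k)/\{-1\}K_2(k)$, which is false and strictly stronger than what is needed. The correct reason $\gamma$ is surjective is exactly that this cokernel is the Milnor part, which is by definition killed in $\indk{k}=K_3(k)/K_3^M(k)$.

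For (2) and (3) your outline (factorization through $\grsc{k}$-coinvariants via the Whitehead-lemma triviality of the unit action on $H_3(SL_\infty)$, then injectivity via the Bloch-group description) is the right shape, but the injectivity step is precisely where all the content sits, and you leave it at ``this should follow''; the paper offloads it to Mirzaii's Proposition 6.4, whose proof is a nontrivial spectral-sequence computation, not a formal matching of relations. One further slip in (3): for quadratically closed $k$ one does \emph{not} have $B(k)\cong\indk{k}$; these differ by a $\mathrm{Tor}(\mu_k,\mu_k)$-term (cf. the Bloch--Wigner-type sequence in Section 6 of the paper), and the statement that actually holds, and is what Mirzaii proves, is the direct isomorphism $H_3(SL_2(k),\Z)\cong\indk{k}$.
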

\begin{proof}
\begin{enumerate}
\item This is \cite[Lemma 5.1]{hutchinson:h3sl}
\item This is \cite[Proposition 6.4 (ii)]{mirzaii:h3gl}.
\item This is \cite[Proposition 6.4 (iii)]{mirzaii:h3gl}.
\end{enumerate}
\end{proof}

We also note that since the map $\gamma$ factors through the stabilization homomorphism we have
\begin{lemma}
\label{lem:bsltoindk}
Let $k$ be a field of characteristic $0$. 
Then there is a natural surjective homomorphism $\gamma':H_3(BSL_2(k[\Delta^\bullet]),\mathbb{Z})\to
\indk{k}$ giving rise to a commutative triangle
\[
\xymatrix{
H_3(SL_2(k),\Z)
\ar[d]\ar[dr]^-{\gamma}
&\\
H_3(BSL_2(k[\Delta^\bullet]),\mathbb{Z})
\ar[r]^-{\gamma'}
&
\indk{k}.}
\]
\end{lemma}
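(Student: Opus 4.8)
The plan is to factor the surjection $\gamma$ from Lemma~\ref{lem:alpha}(1) through the change-of-topology morphism by using the compatibility of the stabilization maps for discrete and simplicial special linear groups, together with homotopy invariance of algebraic $K$-theory. First I would recall that $\gamma$ is by definition the composite
\[
H_3(SL_2(k),\Z)\to H_3(SL_\infty(k),\Z)\to\indk{k},
\]
where the first arrow is ordinary stabilization and the second is the canonical surjection $H_3(SL_\infty(k),\Z)\cong K_3(k)/\{-1\}K_2(k)\twoheadrightarrow K_3(k)/K_3^M(k)$ coming from Suslin's theorem \cite{suslin:k3}. The key point is that the stabilization map $H_3(SL_2(k),\Z)\to H_3(SL_\infty(k),\Z)$ itself factors through $H_3(BSL_2(k[\Delta^\bullet]),\Z)$.

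To see this, I would invoke \prettyref{cor:stabh3}(1): for a field $k$ of characteristic $0$, the inclusions $SL_3\hookrightarrow SL_4\hookrightarrow\cdots$ induce isomorphisms $H_3(BSL_n(k[\Delta^\bullet]),\Z)\cong H_3(BSL_\infty(k[\Delta^\bullet]),\Z)$ for $n\geq 3$ (applying the corollary with $2n-1=3$, i.e., $n=2$, gives the odd-degree stabilization isomorphism starting from $SL_3$, which by the higher cases of \prettyref{prop:stabsl} continues up to $SL_\infty$). By homotopy invariance of algebraic $K$-theory, $H_3(BSL_\infty(k[\Delta^\bullet]),\Z)\cong H_3(SL_\infty(k),\Z)$, compatibly with the change-of-topology map $H_3(SL_\infty(k),\Z)\to H_3(BSL_\infty(k[\Delta^\bullet]),\Z)$, which is therefore an isomorphism. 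Now consider the commutative square
\[
\xymatrix{
H_3(SL_2(k),\Z)\ar[r]\ar[d] & H_3(BSL_2(k[\Delta^\bullet]),\Z)\ar[d]\\
H_3(SL_\infty(k),\Z)\ar[r]^-{\cong} & H_3(BSL_\infty(k[\Delta^\bullet]),\Z),
}
\]
where the vertical maps are the respective stabilizations and the horizontal maps are change-of-topology. The bottom map being an isomorphism, I define $\gamma'$ as the composite
\[
H_3(BSL_2(k[\Delta^\bullet]),\Z)\to H_3(BSL_\infty(k[\Delta^\bullet]),\Z)\xrightarrow{\cong} H_3(SL_\infty(k),\Z)\twoheadrightarrow\indk{k}.
\]
Commutativity of the square immediately yields $\gamma'\circ(\text{change-of-topology})=\gamma$, which is the asserted triangle. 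Surjectivity of $\gamma'$ follows because $\gamma$ is already surjective by Lemma~\ref{lem:alpha}(1) and $\gamma$ factors through $\gamma'$.

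The only mild subtlety—hardly an obstacle—is bookkeeping the stabilization range: one must check that $H_3$ lies in the stable range for all $SL_n$ with $n\geq 3$, which is immediate from \prettyref{prop:stabsl} (the map $H_i(BSL_{n-1}(k[\Delta^\bullet]),\Z)\to H_i(BSL_n(k[\Delta^\bullet]),\Z)$ is an isomorphism for $i\leq n-2$, so for $i=3$ one needs $n\geq 5$, while for $n=4$ one uses the surjectivity of $H_4(BSL_4\to K_4^{MW})$ from \prettyref{cor:stabh3}(1) to get the isomorphism $H_3(BSL_3)\cong H_3(BSL_4)$), together with homotopy invariance of $K$-theory for the comparison $H_3(BSL_\infty(k[\Delta^\bullet]),\Z)\cong H_3(SL_\infty(k),\Z)$. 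Everything else is formal diagram-chasing.
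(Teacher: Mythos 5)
Your proof is correct and follows essentially the same route as the paper: both factor $\gamma$ through the commutative square relating the stabilization maps $SL_2\to SL_\infty$ for the discrete and simplicial groups, use \prettyref{cor:stabh3} (plus homotopy invariance of algebraic $K$-theory) to identify $H_3(BSL_\infty(k[\Delta^\bullet]),\Z)$ with $H_3(SL_\infty(k),\Z)$, and then compose with Suslin's surjection onto $\indk{k}$. Your additional bookkeeping of the stabilization range is a correct (if slightly more explicit) unpacking of what \prettyref{cor:stabh3} already provides.
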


\begin{proof}
There is a natural commutative square 
\[
\xymatrix{
H_3(SL_2(k),\Z)\ar[r]\ar[d]
&
H_3(SL_\infty(k),\Z)\ar[d]^-{\cong}\\
H_3(BSL_2(k[\Delta^\bullet]),\mathbb{Z})
\ar[r]
&
H_3(BSL_\infty(k[\Delta^\bullet]),\mathbb{Z}).
}
\]
In this diagram, the right-hand vertical map is an isomorphism by \prettyref{cor:stabh3}. 
\end{proof}

Combining \prettyref{lem:alpha} (3) and \prettyref{lem:bsltoindk}, we immediately deduce:

\begin{corollary}\label{cor:quadclosed} Let $k$ be a field of characteristic $0$ satisfying $k^\times=(k^\times)^2$.
Then the change-of-topology morphism
\[
H_3(SL_2(k),\Z)\to H_3(BSL_2(k[\Delta^\bullet]),\mathbb{Z})
\]
is injective. The image of this map is isomorphic to $\indk{k}$ and is a direct summand of 
 $H_3(BSL_2(k[\Delta^\bullet]),\mathbb{Z})$. 
\end{corollary}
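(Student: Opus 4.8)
The plan is to assemble \prettyref{cor:quadclosed} from the two preceding results with essentially no extra work. First I would invoke \prettyref{lem:bsltoindk}, which under the hypothesis $\operatorname{char}k=0$ gives a commutative triangle
\[
\xymatrix{
H_3(SL_2(k),\Z)\ar[d]_-{\iota}\ar[dr]^-{\gamma}&\\
H_3(BSL_2(k[\Delta^\bullet]),\Z)\ar[r]^-{\gamma'}&\indk{k},
}
\]
where $\iota$ is the change-of-topology morphism and $\gamma'$ is surjective. Then I would use the additional hypothesis $k^\times=(k^\times)^2$ together with \prettyref{lem:alpha}~(3): the composite $\gamma$ is in fact an \emph{isomorphism} $H_3(SL_2(k),\Z)\xrightarrow{\cong}\indk{k}$.

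The injectivity of $\iota$ is then immediate from the triangle: since $\gamma'\circ\iota=\gamma$ is injective (indeed bijective), $\iota$ must be injective. For the splitting statement, note that the triangle exhibits $\gamma'$ as a retraction onto $\indk{k}$ of a map whose restriction to the image of $\iota$ is an isomorphism. More precisely, $\gamma^{-1}\circ\gamma'\colon H_3(BSL_2(k[\Delta^\bullet]),\Z)\to H_3(SL_2(k),\Z)$ is a left inverse to $\iota$: one has $(\gamma^{-1}\circ\gamma')\circ\iota=\gamma^{-1}\circ\gamma=\mathrm{id}$. A subgroup inclusion admitting a retraction is a direct summand, so $\im(\iota)$ is a direct summand of $H_3(BSL_2(k[\Delta^\bullet]),\Z)$, and it is isomorphic to $H_3(SL_2(k),\Z)\cong\indk{k}$ via $\gamma$ (equivalently via $\gamma'$ restricted to the image).

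There is no real obstacle here: the statement is a formal consequence of \prettyref{lem:bsltoindk} and \prettyref{lem:alpha}~(3), and the only thing to be slightly careful about is that both $\operatorname{char}k=0$ (needed for \prettyref{lem:bsltoindk}, which relies on the stabilization \prettyref{cor:stabh3}) and $k^\times=(k^\times)^2$ (needed for \prettyref{lem:alpha}~(3) to upgrade $\gamma$ from a surjection to an isomorphism) are genuinely used. One should also remark that all the relevant maps are homomorphisms of abelian groups — no module structure is needed at this point — so the splitting is just a splitting of abelian groups. If one wants, the identification of the summand can be phrased as: $H_3(BSL_2(k[\Delta^\bullet]),\Z)\cong\indk{k}\oplus\ker(\gamma')$, with the first factor being precisely $\im(\iota)$.
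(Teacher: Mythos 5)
Your proposal is correct and follows exactly the paper's route: the paper deduces the corollary "immediately" from \prettyref{lem:alpha}~(3) and \prettyref{lem:bsltoindk}, and your write-up just makes explicit the standard retraction argument ($\gamma'\circ\iota=\gamma$ an isomorphism forces $\iota$ injective and split). Nothing is missing.
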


\begin{corollary}
\label{cor:pos}
Let $k$ be an algebraically closed field of characteristic $0$. Then
weak homotopy invariance holds over $k$ with finite coefficients,
i.e., 
$$
H_3(SL_2(k),\mathbb{Z}/\ell)\cong
H_3(BSL_2(k[\Delta^\bullet]),\mathbb{Z}/\ell)\cong 0.
$$
\end{corollary}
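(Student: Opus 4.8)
The plan is to combine \prettyref{cor:quadclosed} with two classical computations over algebraically closed fields of characteristic $0$. First, note that an algebraically closed field $k$ of characteristic $0$ satisfies $k^\times = (k^\times)^2$, so \prettyref{cor:quadclosed} applies: the change-of-topology map $H_3(SL_2(k),\Z)\to H_3(BSL_2(k[\Delta^\bullet]),\Z)$ is injective with image a direct summand isomorphic to $\indk{k}$. The strategy is therefore to show that both $H_3(SL_2(k),\Z/\ell)$ and $H_3(BSL_2(k[\Delta^\bullet]),\Z/\ell)$ vanish, which I expect to follow from the fact that $\indk{k}$ is divisible (in fact uniquely divisible) for such $k$, so that it has no $\ell$-torsion and no mod-$\ell$ reduction.

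The key steps, in order, are as follows. (1) Recall Suslin's theorem that for an algebraically closed field $k$ of characteristic $0$, the group $\indk{k} = K_3(k)/K_3^M(k)$ is uniquely divisible; indeed $K_3^M(k)$ is uniquely divisible and $K_3^{\operatorname{ind}}(k)$ is divisible, and Suslin's rigidity results identify the torsion of $K_3(k)$ with roots of unity sitting in the Milnor part, cf. \cite{suslin:k3}. (2) By \prettyref{lem:alpha}(3), $H_3(SL_2(k),\Z)\cong \indk{k}$, hence $H_3(SL_2(k),\Z)$ is uniquely divisible; in particular it has trivial $\ell$-torsion and trivial $\ell$-cotorsion, so by the universal coefficient sequence $H_3(SL_2(k),\Z/\ell)=0$. (3) For the $\Ao$-invariant side, use \prettyref{cor:quadclosed}: $H_3(BSL_2(k[\Delta^\bullet]),\Z)\cong \indk{k}\oplus C$ for some complement $C$; one must check that $C$ is also uniquely divisible (or at least has no $\ell$-torsion or $\ell$-cotorsion). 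This follows because $C$ is detected by the stabilization sequence of \prettyref{prop:stabsl} together with \prettyref{cor:stabh3}: we have $H_3(BSL_3(k[\Delta^\bullet]),\Z)\cong H_3(BSL_\infty(k))\cong \indk{k}$ by homotopy invariance of algebraic $K$-theory and Suslin's computation, and the kernel of $H_3(BSL_2(k[\Delta^\bullet]))\to H_3(BSL_3(k[\Delta^\bullet]))$ is a quotient of $K_4^{MW}(k)$, which over an algebraically closed field of characteristic $0$ is a quotient of $K_4^M(k)$ and hence divisible. So $C$ is divisible and torsion-free, hence uniquely divisible, and the universal coefficient theorem gives $H_3(BSL_2(k[\Delta^\bullet]),\Z/\ell)=0$.

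I expect the main obstacle to be step (3): controlling the complement $C$, i.e. the part of $H_3(BSL_2(k[\Delta^\bullet]))$ not coming from $\indk{k}$. One has to be careful about whether $K_4^{MW}(k)$ (or the relevant quotient appearing in the stabilization sequence) really is divisible with no $\ell$-torsion over an algebraically closed field of characteristic $0$; here one uses that $W(k)=\Z/2$ so the fundamental-ideal powers $I^n(k)$ vanish for $n\ge 1$, giving $K_n^{MW}(k)\cong K_n^M(k)$ in the relevant degrees, and $K_n^M(k)$ of an algebraically closed field of characteristic $0$ is uniquely divisible for $n\ge 2$ by Bass–Tate/Suslin. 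Once that is in hand, the mod-$\ell$ vanishing of both sides is immediate from the universal coefficient sequence, and the isomorphism $H_3(SL_2(k),\Z/\ell)\cong H_3(BSL_2(k[\Delta^\bullet]),\Z/\ell)$ holds trivially because both groups are zero. A minor point to address is that \prettyref{cor:quadclosed} is stated for characteristic $0$, which is exactly our hypothesis, so no extension is needed.
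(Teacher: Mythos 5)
Your overall reduction --- invoke \prettyref{cor:quadclosed}, then kill both sides mod $\ell$ via unique divisibility and the universal coefficient theorem --- is the same skeleton as the paper's proof, but your step (3) has a genuine gap. You control the complement $C$ of $\indk{k}$ inside $H_3(BSL_2(k[\Delta^\bullet]),\Z)$ by asserting that the kernel of the special linear stabilization map $H_3(BSL_2(k[\Delta^\bullet]))\to H_3(BSL_3(k[\Delta^\bullet]))$ is a quotient of $K_4^{MW}(k)$. Nothing in \prettyref{prop:stabsl} or \prettyref{cor:stabh3} gives this. The stabilization sequence for $SL_2\hookrightarrow SL_3$ reads
$$
H_3(BSL_2(k[\Delta^\bullet]))\to H_3(BSL_3(k[\Delta^\bullet]))\to K_3^{MW}(k)\to H_2(BSL_2(k[\Delta^\bullet]))\to\cdots,
$$
which controls only the \emph{cokernel} of the first map. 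Its kernel is the image of the relative group $H_4(B\Sing(SL_3)(k),B\Sing(SL_2)(k))$, which is not computed anywhere in the paper: the relative Hurewicz theorem identifies only the first nonvanishing relative homology group ($H_3$ of the pair, $\cong K_3^{MW}(k)$), and the next relative homotopy group is $\pi_3^{\Ao}(\mathbb{A}^3\setminus\{0\})(k)$, a considerably more complicated object (cf. the Asok--Fasel computations) that is not a priori divisible. So your argument does not establish that $C$, hence $H_3(BSL_2(k[\Delta^\bullet]),\Z)$, is divisible.

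The paper sidesteps this by stabilizing symplectically instead of special-linearly: \prettyref{cor:stabh3}(2) with $n=2$ gives an isomorphism $H_3(BSL_2(k[\Delta^\bullet]),\Z)=H_3(BSp_2(k[\Delta^\bullet]),\Z)\cong H_3(BSp_\infty(k[\Delta^\bullet]),\Z)$ --- so there is no kernel to control at all --- and then the Hurewicz surjection $KSp_3(k)\twoheadrightarrow H_3(BSp_\infty(k[\Delta^\bullet]),\Z)$ together with unique divisibility of $KSp_3(k)$ over an algebraically closed field of characteristic $0$ yields the vanishing of $H_3(BSL_2(k[\Delta^\bullet]),\Z)\otimes\Z/\ell$. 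Two smaller corrections: in your step (2) the universal coefficient sequence requires the vanishing of $\operatorname{Tor}(H_2(-,\Z),\Z/\ell)$, not of the $\ell$-torsion of $H_3$; this is supplied by $H_2\cong K_2^{MW}(k)\cong K_2^M(k)$ being uniquely divisible (on both the discrete and the $\Ao$-invariant side, and the paper makes this explicit for the latter). And for the tensor part of the universal coefficient sequence in degree $3$, plain divisibility of $H_3$ suffices; you do not need $C$ to be torsion-free.
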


\begin{proof}
By \prettyref{cor:quadclosed}, it suffices to show that
$H_3(BSL_2(k[\Delta^\bullet]),\mathbb{Z}/\ell)\cong 0$. We have
$H_2(BSL_2(k[\Delta^\bullet]),\mathbb{Z})\cong K^{MW}_2(k)$ by
\cite[p.185]{morel:book}. But since 
$k$ is algebraically closed, the group $K^{MW}_2(k)\cong K^M_2(k)$ is
uniquely 
divisible. The universal coefficient theorem then implies that it
suffices to show that
$
H_3(BSL_2(k[\Delta^\bullet]),\mathbb{Z})\otimes_{\mathbb{Z}}\mathbb{Z}/\ell=0. 
$
By \prettyref{cor:stabh3}, it suffices to show vanishing of
$H_3(BSp_\infty(k[\Delta^\bullet]),\mathbb{Z})\otimes_{\mathbb{Z}}\mathbb{Z}/\ell$. 
But then we have the surjection 
$$
KSp_3(k)\otimes_{\mathbb{Z}}\mathbb{Z}/\ell\to
H_3(BSp_\infty(k[\Delta^\bullet])\otimes_{\mathbb{Z}}\mathbb{Z}/\ell 
$$
and unique divisibility of $KSp_3(k)$ implies the required
vanishing. 
\end{proof}

\section{Actions of multiplicative groups and Grothendieck-Witt rings} 
\label{sec:mod}

In this section, we will discuss the comparison of natural
$\mathbb{Z}[k^\times/(k^\times)^2]$-module structures on the homology
groups $H_\bullet(SL_2(k),\mathbb{Z})$ and
$H_\bullet(BSL_2(k[\Delta^\bullet]),\mathbb{Z})$.  
The two main statements are that the change-of-topology morphism is
equivariant for these additional module structures, and that on the
$\Ao$-homotopy side, the
module structure on $H_\bullet(BSL_2(k[\Delta^\bullet]),\mathbb{Z})$
descends to $GW(k)$-module structure. This is achieved by analysing
the action of units on the $\Ao$-homotopy type of $\mathbb{P}^1$.

\subsection{Review of Grothendieck-Witt rings}

We first recall definition and notation for Grothendieck-Witt rings,
collated from various sources \cite{lam:book},
\cite{hutchinson:tao} and \cite[Section 3]{morel:book}.

For a field $k$, we have the ring $\mathbb{Z}[k^\times/(k^\times)^2]$,
which is the integral group ring over the group
$k^\times/(k^\times)^2$ of square classes of $k$.  For $a\in k^\times$
we will let $\sqc{a}\in \Z[k^\times/(k^\times)^2]$ denote the square
class of $a$. We let  $\mathcal{I}_k$ denote the augmentation ideal
with generators $\pf{a}:=\sqc{a}-1$.  

The Grothendieck-Witt ring $GW(k)$ of the field $k$ is the group
completion of the set of isometry classes of nondegenerate symmetric
bilinear forms over $k$. The addition and multiplication operations
are given by orthogonal sum and tensor product of symmetric bilinear
forms, respectively. For $a\in k$, the $1$-dimensional form
$(x,y)\mapsto axy$ is denoted by $\sqc{a}\in GW(k)$. 
The dimension function provides an augmentation
$\dim:GW(k)\rightarrow \mathbb{Z}$, and the augmentation ideal $I(k)$
is called the \emph{fundamental ideal}. It is generated by the Pfister
forms $\pf{a}:=\sqc{a}-1$. 

There are well-known presentations of the Grothendieck-Witt ring of field, dating back to the original 
work of Witt. For example,  Theorem 4.3 in \cite[Chapter II]{lam:book}
 asserts that, as an abelian group, 
$GW(k)$ is generated by the square classes $\sqc{a}$, subject to the additional family of relations
\[
\sqc{a}+\sqc{b}-\sqc{a+b}+\sqc{ab(a+b)}, a,b,a+b\in k^\times.
\]
However, we note that these relations can be re-written (in the group-ring ) as 
\[
\sqc{b}\pf{c}\pf{1-c}
\]
where $c=a/(a+b)$. Thus the additive subgroup generated by these relations is equal to the \emph{ideal} 
of $\Z[k^\times/(k^\times)^2]$ generated by the expressions $\pf{c}\pf{1-c}$. We deduce:

\begin{proposition}\label{prop:gwk}

There is a homomorphism of augmented rings
$$\mathbb{Z}[k^\times/(k^\times)^2]\to GW(k):\sqc{a}\mapsto \sqc{a}.$$
This homomorphism is surjective, and its kernel is the  
ideal, $\mathcal{J}_k$, generated by the `Steinberg elements'
$\pf{a}\pf{1-a}\in \mathcal{I}_k^2$. 
\end{proposition}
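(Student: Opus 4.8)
The plan is to deduce the statement from the classical Witt presentation of $GW(k)$ quoted just above, by recognizing the additive subgroup generated by the Witt relations as an ideal and identifying it with $\mathcal{J}_k$. First I would observe that there is an obvious ring homomorphism $\mathbb{Z}[k^\times/(k^\times)^2]\to GW(k)$ sending the square class $\sqc{a}$ to the one-dimensional form $\sqc{a}$; this is well defined because the group ring is the free commutative ring on the group $k^\times/(k^\times)^2$ and the target is commutative with the $\sqc{a}$ satisfying $\sqc{a}\sqc{b}=\sqc{ab}$ and $\sqc{a}^2=\sqc{1}=1$. It is augmented since $\dim\sqc{a}=1$ for all $a$. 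Surjectivity is immediate from the Witt presentation, since $GW(k)$ is additively generated by the classes $\sqc{a}$.

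The heart of the argument is the identification of the kernel. By the presentation in \cite[Chapter II, Theorem 4.3]{lam:book}, the kernel — which a priori is only the additive subgroup of $\mathbb{Z}[k^\times/(k^\times)^2]$ generated by all elements $\sqc{a}+\sqc{b}-\sqc{a+b}+\sqc{ab(a+b)}$ with $a,b,a+b\in k^\times$ — actually generates the whole kernel as a subgroup. So it suffices to show that this additive subgroup equals the ideal $\mathcal{J}_k$ generated by the Steinberg elements $\pf{a}\pf{1-a}$. For the computation, set $c=a/(a+b)$, so $1-c=b/(a+b)$, and note that in $k^\times/(k^\times)^2$ one has $\sqc{ab(a+b)}=\sqc{a/(a+b)}\cdot\sqc{b/(a+b)}\cdot\sqc{(a+b)}=\sqc{c}\sqc{1-c}\sqc{a+b}$, using that multiplying an argument by a square is trivial. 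A direct expansion then gives
\[
\sqc{a}+\sqc{b}-\sqc{a+b}+\sqc{ab(a+b)} = \sqc{a+b}\bigl(\sqc{c}+\sqc{1-c}-1+\sqc{c}\sqc{1-c}\bigr) = \sqc{a+b}\,\pf{c}\,\pf{1-c},
\]
since $\pf{c}\pf{1-c}=(\sqc{c}-1)(\sqc{1-c}-1)=\sqc{c}\sqc{1-c}-\sqc{c}-\sqc{1-c}+1$. This shows every Witt relation lies in $\mathcal{J}_k$. Conversely, as $a,b$ range over all pairs with $a,b,a+b\in k^\times$, the parameter $c=a/(a+b)$ ranges over all of $k^\times\setminus\{1\}$ (given any such $c$, take $a=c$, $b=1-c$), while the prefactor $\sqc{a+b}=\sqc{a+b}$ is an arbitrary square class; so the Witt relations additively span $\sqc{u}\,\pf{c}\,\pf{1-c}$ for all units $u$ and all admissible $c$, which is exactly an additive generating set for the ideal $\mathcal{J}_k$ (the $\sqc{u}$ being a $\mathbb{Z}$-basis of the group ring). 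Hence the additive subgroup generated by the Witt relations coincides with $\mathcal{J}_k$, and the kernel is $\mathcal{J}_k$ as claimed. Finally $\pf{a}\pf{1-a}\in\mathcal{I}_k^2$ is clear since each $\pf{a}\in\mathcal{I}_k$.

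The only genuinely delicate point is the appeal to the Witt presentation in the form that the listed relations \emph{generate} the kernel as a subgroup rather than merely lying in it; everything else is the bookkeeping identity above together with the elementary observation that an additive subgroup of a group ring stable under multiplication by the basis elements $\sqc{u}$ is an ideal. I would simply cite \cite[Chapter II, Theorem 4.3]{lam:book} for that, as is already done in the text preceding the statement.
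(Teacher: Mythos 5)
Your proposal follows essentially the same route as the paper: the paper proves this proposition precisely by quoting Lam's presentation and rewriting the Witt relation as a unit multiple of $\pf{c}\pf{1-c}$ with $c=a/(a+b)$, exactly as you do. One small but real inconsistency: your displayed identity cannot be right as written, since you correctly expand $\pf{c}\pf{1-c}=\sqc{c}\sqc{1-c}-\sqc{c}-\sqc{1-c}+1$ but equate $\sqc{a+b}\pf{c}\pf{1-c}$ with $\sqc{a+b}\left(\sqc{c}+\sqc{1-c}-1+\sqc{c}\sqc{1-c}\right)$, and these differ by $2\sqc{a+b}\left(\sqc{c}+\sqc{1-c}-1\right)$. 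The source of the trouble is a sign typo in the paper's quoted relation: the Witt relation is $\sqc{a}+\sqc{b}-\sqc{a+b}-\sqc{ab(a+b)}$, and with that sign one gets $\sqc{a}+\sqc{b}-\sqc{a+b}-\sqc{ab(a+b)}=-\sqc{a+b}\pf{c}\pf{1-c}=\sqc{b}\pf{c}\pf{1-c}$ (the last equality because $\sqc{1-c}\pf{1-c}=-\pf{1-c}$), which is the form the paper uses. Since a set and its negatives generate the same additive subgroup, your identification of the kernel with $\mathcal{J}_k$, including the surjectivity of the parametrization $(a,b)\mapsto(c,\sqc{a+b})$, goes through once the sign is fixed.
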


Recall from \cite[Definition 3.1]{morel:book} that the Milnor-Witt
K-theory $K^{MW}_\bullet(k)$ of the field $k$ is defined to be the
graded associative ring generated by symbols $[a], a\in k^\times$ of
degree $1$ and $\eta$ of degree $-1$ with the following relations:
\begin{enumerate}
\item $[a][1-a]=0$ for each $a\in k^\times\setminus\{1\}$,
\item $[ab]=[a]+[b]+\eta[a][b]$ for each $a,b\in k^\times$, 
\item $[a]\eta=\eta[a]$ for each $a\in k^\times$,
\item $\eta h=0$ for $h=\eta[-1]+2$.
\end{enumerate}
There is an augmentation $K^{MW}_0(k)\rightarrow
K^{MW}_0(k)/\eta\cong\mathbb{Z}$. The morphism 
$$
GW(k)\to K^{MW}_0(k):\sqc{a}\mapsto 1+\eta[a]
$$
induces an isomorphism of augmented rings. 

\subsection{Module structures and equivariance} 
\begin{construction}
\label{const:action}
We start by describing some actions of the multiplicative group on
$SL_2$ and $\mathbb{P}^1$. 
\begin{description}
\item[An action of $\mathbb{G}_m$ on $SL_2$] recall the standard exact
  sequence of algebraic groups 
$$
1\to SL_2\hookrightarrow
GL_2\stackrel{\det}{\longrightarrow}
\mathbb{G}_m\rightarrow 1.
$$
The determinant $\det:GL_2\to\mathbb{G}_m$ has a splitting given by 
$$
\mathbb{G}_m\to GL_2:u\mapsto \left(\begin{array}{cc}
u&0\\0&1
\end{array}\right).
$$
The splitting followed by the conjugation action of $GL_2$ on $SL_2$
induces an action of $\mathbb{G}_m$ on $SL_2$. Evaluating on
$R$-points for $R$ any $k$-algebra, the action of $k^\times$ on
$SL_2(R)$ is evidently described as   
$$
k^\times\times SL_2(R)\rightarrow SL_2(R): (u,A)\mapsto 
\left(\begin{array}{cc}
u&0\\0&1\end{array}\right)
\cdot A\cdot
\left(\begin{array}{cc}
u^{-1}&0\\0&1\end{array}\right)
$$
From the above description, it is clear that for a $k$-algebra homomorphism
$R\rightarrow S$, the induced homomorphism $SL_2(R)\rightarrow
SL_2(S)$ is $k^\times$-equivariant. 
\item[An action of $k^\times$ on $\mathbb{P}^1(k)$] the projection 
$$
GL_2\rightarrow\mathbb{P}^1:\left(\begin{array}{cc}
a&b\\c&d\end{array}\right)\mapsto [c:d].
$$ 
induces an action of $k^\times$ on $\mathbb{P}^1(k)$. This action is
given by  
$$
k^\times\times\mathbb{P}^1(k)\rightarrow\mathbb{P}^1(k):(u,[x,y])\mapsto
[ux:y]. 
$$
\item[An action of $k^\times$ on ${BSL_2(k[\Delta^\bullet])}$] the  action of
$\mathbb{G}_m$ on $SL_2$ above induces an action of 
$k^\times$ on the simplicial sets $SL_2(k[\Delta^\bullet])$. This
follows from the $k^\times$-equivariance of maps induced from ring
homomorphisms.  This action is an action by group automorphisms (as
opposed to just automorphisms as a space), hence it induces an action
of $k^\times$ on $BSL_2(k[\Delta^\bullet])$. 
\end{description}
\end{construction}

From the above constructions, the following is immediate:

\begin{lemma}
\label{lem:mod1}
The following morphisms are $k^\times$-equivariant, for the module
structures defined above:
\begin{enumerate}[(i)]
\item the Hurewicz homomorphism
  $\pi_\bullet(BSL_2(k[\Delta^\bullet]))\rightarrow
  H_\bullet(BSL_2(k[\Delta^\bullet]))$,
\item the identification $\pi_\bullet(BSL_2(k[\Delta^\bullet]))\cong
  \pi_\bullet^{\Ao}(BSL_2)(k)$,
\item the loop-space isomorphism
  $\pi_{\bullet+1}^{\Ao}(BSL_2)(k)\cong\pi_\bullet^{\Ao}(SL_2)(k)$,
\item the projection 
  $\pi_\bullet^{\Ao}(SL_2)(k)\rightarrow\pi_\bullet^{\Ao}(\mathbb{P}^1)(k)$
  (which happens to be an isomorphism for $\bullet\geq 2$).
\end{enumerate}
\end{lemma}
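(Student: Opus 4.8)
The plan is to verify each of the four equivariance statements by tracing through the definitions of the $k^\times$-actions introduced in \prettyref{const:action} and checking that each construction is natural enough to commute with them. For (i), I would observe that the $k^\times$-action on $BSL_2(k[\Delta^\bullet])$ comes from an action by simplicial group automorphisms (as emphasized at the end of \prettyref{const:action}); the Hurewicz homomorphism is natural with respect to maps of pointed simplicial sets, and in particular with respect to automorphisms of $BSL_2(k[\Delta^\bullet])$, so the action on $\pi_\bullet$ and on $H_\bullet$ are intertwined by construction. For (ii), the identification $\pi_\bullet(BSL_2(k[\Delta^\bullet]))\cong\pi_\bullet^{\Ao}(BSL_2)(k)$ ultimately rests on the $\Ao$-locality statements recalled in \prettyref{prop:sphere} and the surrounding discussion ($BSL_2$ is $\Ao$-local, so sections of the $\Ao$-singular resolution compute $\Ao$-homotopy sheaves evaluated at $\Spec k$); one must check that the $\mathbb{G}_m$-action on $SL_2$ as an algebraic group induces the same action on both sides. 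This is a formal consequence of the fact that all the relevant constructions ($\Sing$, Nisnevich sheafification, classifying space, $\Ao$-localization, evaluation at $\Spec k$) are functorial, so the algebraic automorphisms of $SL_2$ coming from $\mathbb{G}_m$-conjugation act compatibly.

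For (iii), the loop-space isomorphism $\pi_{\bullet+1}^{\Ao}(BSL_2)(k)\cong\pi_\bullet^{\Ao}(SL_2)(k)$ is again natural in the algebraic group: an automorphism of $SL_2$ induces an automorphism of $BSL_2$ compatible with the $\Ao$-fibre sequence $SL_2\to ESL_2\to BSL_2$ (or the adjunction $\Omega BG\simeq G$), so the induced maps on $\Ao$-homotopy sheaves commute with the connecting/looping isomorphism. The only subtle point is to confirm that the $k^\times$-action on $\pi_\bullet^{\Ao}(SL_2)(k)$ obtained by looping agrees with the action induced directly by $\mathbb{G}_m$-conjugation on $SL_2$; this follows because $B(-)$ and $\Omega(-)$ are inverse equivalences and both are functorial. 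For (iv), the projection $\pi_\bullet^{\Ao}(SL_2)(k)\to\pi_\bullet^{\Ao}(\mathbb{P}^1)(k)$ is induced by the algebraic map $SL_2\to\mathbb{P}^1$, $\left(\begin{smallmatrix}a&b\\c&d\end{smallmatrix}\right)\mapsto[c:d]$, which is part of the projection $GL_2\to\mathbb{P}^1$ in \prettyref{const:action}. One checks directly that this map is $\mathbb{G}_m$-equivariant for the conjugation action on $SL_2$ and the stated action $[x:y]\mapsto[ux:y]$ on $\mathbb{P}^1$: conjugating $\left(\begin{smallmatrix}a&b\\c&d\end{smallmatrix}\right)$ by $\diag(u,1)$ sends the bottom row $(c,d)$ to $(cu^{-1}\cdot u, d) = (c, d)$ composed appropriately — one has to unwind that the conjugate is $\left(\begin{smallmatrix}a & bu\\ cu^{-1} & d\end{smallmatrix}\right)$, whose image is $[cu^{-1}:d]=[c:du]$, matching the action up to the chosen normalization of signs; and equivariance on $\mathbb{A}^1$-homotopy sheaves then follows by functoriality as before. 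The parenthetical remark that this projection is an isomorphism for $\bullet\geq 2$ is just the statement that $SL_2\to\mathbb{P}^1$ is an $\Ao$-weak equivalence after one suspension / on $\pi_{\geq 2}^{\Ao}$, which is recalled from \cite{morel:book} and not needed for the equivariance assertion.

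The main (and really only) obstacle is a bookkeeping one: there are two a priori different ways the group $k^\times$ can act on each intermediate object — one coming from \prettyref{const:action} directly and one inherited via the chain of identifications — and the content of the lemma is that these coincide at each stage. Since every identification in sight ($\Sing$, sheafification, $B$, $\Omega$, $\Ao$-localization, evaluation at a point, Hurewicz) is a functor or a natural transformation, each square commutes on the nose, and the proof amounts to assembling these functoriality statements in the right order. I expect no genuine difficulty, only the need to be careful that the $\mathbb{G}_m$-action is always the one induced by the algebraic automorphisms of $SL_2$ (equivalently of $\mathbb{P}^1$) so that functoriality applies uniformly; this is exactly why \prettyref{const:action} was phrased in terms of an action of the algebraic group $\mathbb{G}_m$ rather than an ad hoc action on $k$-points.
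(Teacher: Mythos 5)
Your proposal is correct and matches the paper's intent: the paper states the lemma as ``immediate'' from \prettyref{const:action}, and your functoriality/naturality argument (with the explicit check that conjugation by $\diag(u,1)$ is compatible with the projection to $\mathbb{P}^1$, up to the harmless $u\leftrightarrow u^{-1}$ normalization, which is invisible on square classes) is exactly the justification being taken for granted.
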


\begin{proposition}
\label{prop:gw}
Assume that $k$ is an infinite perfect field of characteristic $\neq 2$.
Then the action of $k^\times$ on $SL_2(k[\Delta^\bullet])$ induces a
$GW(k)$-module structure on
$\pi_3(BSL_2(k[\Delta^\bullet]))$ and hence on
$H_3(BSL_2(k[\Delta^\bullet]))$. 
\end{proposition}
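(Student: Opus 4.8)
The plan is to show that the $\Z[k^\times/(k^\times)^2]$-module structure on $\pi_3(BSL_2(k[\Delta^\bullet]))$ coming from \prettyref{const:action} factors through the quotient ring $GW(k)$ of \prettyref{prop:gwk}, i.e.\ that the Steinberg elements $\pf{a}\pf{1-a}$ act as zero. By \prettyref{lem:mod1}, the action of $k^\times$ on $\pi_3(BSL_2(k[\Delta^\bullet]))\cong\pi_3^{\Ao}(BSL_2)(k)\cong\pi_2^{\Ao}(SL_2)(k)$ is identified, via the projection $SL_2\to\mathbb{P}^1$ (which is an $\Ao$-weak equivalence on relevant homotopy sheaves, so induces an isomorphism on $\pi_2$), with the action of $k^\times$ on $\pi_2^{\Ao}(\mathbb{P}^1)(k)$. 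So it suffices to understand the $k^\times$-action on the $\Ao$-homotopy sheaves of $\mathbb{P}^1$, and in particular on $\pi_2^{\Ao}(\mathbb{P}^1)$.

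First I would recall that $\mathbb{P}^1$ is $\Ao$-weakly equivalent to $S^1\wedge\mathbb{G}_m$ (the simplicial suspension of $\mathbb{G}_m$), so that its $\Ao$-homotopy sheaves are, up to the relevant shift, the homotopy sheaves of the motivic sphere, which Morel has computed in terms of Milnor–Witt K-theory: $\pi_2^{\Ao}(\mathbb{P}^1)$ is (a shift of) a contraction of $\mathbf{K}^{MW}$, hence in particular a sheaf of $\mathbf{K}^{MW}_0 = \mathbf{GW}$-modules. The second, and key, step is to identify the $k^\times$-action coming from \prettyref{const:action} — the one induced by the $\mathbb{G}_m$-action $(u,[x:y])\mapsto[ux:y]$ on $\mathbb{P}^1$ — with the natural $\mathbf{GW}$-module structure. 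Concretely, the action of $u\in k^\times$ on $\mathbb{P}^1\simeq S^1\wedge\mathbb{G}_m$ is (up to homotopy) the smash of the identity on $S^1$ with the translation-by-$u$ self-map of $\mathbb{G}_m$; and this self-map of $\mathbb{G}_m$ induces, on $\pi_1^{\Ao}(S^1\wedge\mathbb{G}_m)=\mathbf{K}^{MW}_1$ and on all its contractions, multiplication by $\sqc{u}=1+\eta[u]\in\mathbf{GW}$. This is essentially the content of Morel's analysis of the action of units on the motivic sphere; alternatively one checks it on $\pi_1$ directly, where translation by $u$ on $\mathbb{G}_m$ sends the generating loop to $\sqc{u}$ times itself in $\mathbf{K}^{MW}_1$, and then propagates to $\pi_2$ by the loop/contraction formalism. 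Once this identification is in place, the ring homomorphism $\Z[k^\times/(k^\times)^2]\to GW(k)$, $\sqc{u}\mapsto\sqc{u}$, exhibits the $k^\times$-action on $\pi_2^{\Ao}(SL_2)(k)\cong\pi_3(BSL_2(k[\Delta^\bullet]))$ as factoring through $GW(k)$, giving the asserted $GW(k)$-module structure; transporting along the $k^\times$-equivariant Hurewicz map of \prettyref{lem:mod1}(i) gives the $GW(k)$-module structure on $H_3(BSL_2(k[\Delta^\bullet]))$.

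The main obstacle is the second step: making precise and rigorous the claim that the geometric $\mathbb{G}_m$-action on $\mathbb{P}^1$ from \prettyref{const:action} agrees, after the equivalence $\mathbb{P}^1\simeq S^1\wedge\mathbb{G}_m$ and passage to $\pi_2^{\Ao}$, with multiplication by $\sqc{u}$ in the $\mathbf{GW}$-module structure. This requires care about how the $\Ao$-weak equivalence $\mathbb{P}^1\simeq S^1\wedge\mathbb{G}_m$ intertwines the two $\mathbb{G}_m$-actions (on the source, linear scaling of a coordinate; on the target, translation on the $\mathbb{G}_m$-factor), and about the fact that the relevant homotopy sheaf is a contraction of $\mathbf{K}^{MW}$ on which units act through their image in $\mathbf{GW}$ — this is where Morel's computations in \cite[Section 3]{morel:book} and the hypotheses that $k$ is infinite, perfect, of characteristic $\neq 2$ (needed so that the Milnor–Witt computations and the $\Ao$-locality statements of \prettyref{prop:sphere} apply) are used. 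The remaining steps — the reduction via \prettyref{lem:mod1} and the final transport along the Hurewicz map — are formal.
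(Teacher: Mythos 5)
Your overall strategy (reduce via \prettyref{lem:mod1} to the $k^\times$-action on $\pi_2^{\Ao}(\mathbb{P}^1)(k)$ and show the Steinberg elements $\pf{a}\pf{1-a}$ act trivially) is the right frame, but the load-bearing claim in your argument --- that $\pi_2^{\Ao}(\mathbb{P}^1)$ is (a shift of) a contraction of $\mathbf{K}^{MW}$ and hence automatically a sheaf of $GW$-modules --- is false, and this is where the proof breaks. Morel's computations give only the \emph{first} nonvanishing homotopy sheaf of the relevant spheres, e.g.\ $\pi_1^{\Ao}(\mathbb{A}^2\setminus\{0\})\cong K^{MW}_2$ as in \prettyref{prop:sphere}. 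The sheaf you actually need, $\pi_2^{\Ao}(\mathbb{P}^1)\cong\pi_2^{\Ao}(\mathbb{A}^2\setminus\{0\})\cong\pi_2^{\Ao}(SL_2)$, is the first genuinely unstable homotopy sheaf of that motivic sphere; it is not Milnor--Witt K-theory nor a contraction of it, but the much more complicated object computed by Asok and Fasel, sitting in an extension $0\to T_4'(k)\to\pi_2^{\Ao}(SL_2)(k)\to KSp_3(k)\to 0$ with $T_4'$ built from $I^5$ and a quotient of $K_4^M/12$. (A $GW$-structure on $[\mathbb{G}_m\wedge\mathcal{Y},\mathcal{Z}]$ comes formally from precomposition, i.e.\ from contractions of the homotopy sheaves of the \emph{source}; here the scaling acts by postcomposition on the \emph{target} $\mathbb{P}^1\simeq S^1\wedge\mathbb{G}_m$, and no formal argument applies.) This is precisely why the paper's proof instead cites Asok--Fasel's explicit analysis of the $\mathbb{G}_m$-action on this extension: trivial on $T_4'$, by $\sqc{u}$ on $KSp_3$, and --- crucially --- extending to an action of the sheaf $GW$ on the whole extension. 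Note that even knowing $\pf{a}\pf{1-a}$ kills the sub and the quotient of an extension only yields that its square kills the middle term, so the passage to the extension is itself a nontrivial input from loc.\ cit.

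Your second step --- identifying the geometric scaling action on $S^1\wedge\mathbb{G}_m$ with multiplication by $\sqc{u}$ --- is the correct heuristic and is indeed what happens on the quotient $KSp_3$, but as you yourself acknowledge you do not carry it out, and without a correct description of $\pi_2^{\Ao}(\mathbb{P}^1)$ there is nothing to carry it out on. The final transport of the module structure to $H_3$ along the equivariant Hurewicz map (using simple connectivity from \prettyref{rem:sc}) is fine and agrees with the paper.
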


\begin{proof}
We explain how the first assertion can be deduced from the results of
\cite{asok:fasel}. Consider the action of $\mathbb{G}_m$ on $BSp_{2n}$
defined in \cite{asok:fasel}, Section $4$, just before Lemma
4.7. Evaluating this action on $\Spec k$ for $n=1$ provides exactly
the action of $k^\times$ on $BSL_2(k[\Delta^\bullet])$ described
above. In \cite[Lemma 4.7, Corollary 4.9]{asok:fasel}, the action of
the sheaf $\mathbb{G}_m$ on the space $BSp_2$ is
described more explicitly: in the notation of loc.cit. there is an
exact sequence 
$$
0\to T_4'(k)\to \pi_2^{\Ao}(Sp_2)(k)\to GW^2_{3}(k)\to 0.
$$ 
Note that  $\pi_2^{\Ao}(Sp_2)(k)\cong \pi_3BSL_2(k[\Delta^\bullet])$
by the work of Moser \cite{moser} and $GW^2_3(k)$  is just different
notation for $KSp_3(k)$. The action of $k^\times$ on $T_4'$ is trivial
by \cite[Corollary 4.9]{asok:fasel}, and the action of $k^\times$ on
$KSp_3(k)$ is such that $u\in k^\times$ acts via multiplication by
$\langle u\rangle\in GW(k)$ on $KSp_3$. In particular, the
computations in \cite[Section 4]{asok:fasel} cited above  imply that
the action of the sheaf $\mathbb{G}_m$ on the 
space $BSL_2$ actually extends to an action of the sheaf
$GW$. Evaluation on $\Spec k$ then implies the first claim.

For the second assertion, recall from \prettyref{rem:sc} that
$BSL_2(k[\Delta^\bullet])$ is simply-connected. Therefore, the 
Hurewicz homomorphism 
$$\pi_3(BSL_2(k[\Delta^\bullet]))\rightarrow
H_3(BSL_2(k[\Delta^\bullet]))$$ is surjective, and by
\prettyref{lem:mod1}, it is also equivariant. 
Therefore, the $GW(k)$-module
structure on $\pi_3$ descends to a
$GW(k)$-module structure on
$H_3$. 
\end{proof}

\begin{lemma}
The conjugation action of $k^\times$ on $SL_2(k)$ descends to an
action of $\mathbb{Z}[k^\times/(k^\times)^2]$ on
$H_\bullet(SL_2(k),\mathbb{Z})$. The natural change-of-topology
morphism  
$$
H_3(SL_2(k),\mathbb{Z})\rightarrow
H_3(BSL_2(k[\Delta^\bullet]),\mathbb{Z}) 
$$
is equivariant for the $\mathbb{Z}[k^\times/(k^\times)^2]$-module
structures. 
\end{lemma}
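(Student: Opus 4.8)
The plan is to produce the $\mathbb{Z}[k^\times/(k^\times)^2]$-module structure on $H_\bullet(SL_2(k),\mathbb{Z})$ simply by observing that the conjugation action of any $u\in k^\times$ on $SL_2(k)$ is an inner automorphism of $GL_2(k)$ and therefore acts trivially on $H_\bullet$ whenever $u$ is a square — more precisely, the action factors through $k^\times/(k^\times)^2$ because conjugation by $\diag(u^2,1)$ differs from the identity by conjugation by $\diag(u,u^{-1})\in SL_2(k)$ (indeed $\diag(u^2,1) = \diag(u,u^{-1})\cdot\diag(u,u)$ and the central factor $\diag(u,u)$ is not in $SL_2$, so one must be slightly careful; the correct statement is that $\diag(u^2,1)$ and $\diag(u,u^{-1})$ have the same image in $PGL_2(k)$, hence induce the same map on $H_\bullet(SL_2(k),\mathbb{Z})$, and the latter is inner in $SL_2(k)$ hence trivial on homology). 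Thus the $\mathbb{Z}[k^\times]$-action descends as claimed, giving the first sentence.

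For the equivariance statement, I would go back to \prettyref{const:action}: the action of $k^\times$ on both $SL_2(k)$ and $SL_2(k[\Delta^\bullet])$ is defined uniformly, via the conjugation action of $\diag(u,1)\in GL_2$ on the group scheme $SL_2$, evaluated on $k$-points and on $k[\Delta^\bullet]$-points respectively. The change-of-topology morphism is induced by the inclusion of zero-simplices $SL_2(k)\hookrightarrow SL_2(k[\Delta^\bullet])$, which comes from the $k$-algebra map $k\to k[\Delta^\bullet]$ (inclusion of constants). Since the $\mathbb{G}_m$-action on $SL_2$ is an action by algebraic group automorphisms, it is natural in the coefficient ring, so this inclusion is a $k^\times$-equivariant homomorphism of simplicial groups; applying $B(-)$ and then $H_3(-,\mathbb{Z})$ preserves this equivariance. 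The key point, already recorded in \prettyref{const:action}, is precisely the $k^\times$-equivariance of the map $SL_2(R)\to SL_2(S)$ induced by any $k$-algebra homomorphism $R\to S$, applied here to $R = k$, $S = k[\Delta^n]$ for all $n$.

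I do not expect any serious obstacle here; the statement is essentially a bookkeeping consequence of the constructions already set up. The one point that requires a half-line of care is the reduction from a $\mathbb{Z}[k^\times]$-action to a $\mathbb{Z}[k^\times/(k^\times)^2]$-action on $H_\bullet(SL_2(k),\mathbb{Z})$ — one needs that squares act trivially, which follows from the inner-automorphism argument sketched above rather than from anything about $\Ao$-homotopy theory; on the $\Ao$-invariant side the descent to $k^\times/(k^\times)^2$ (indeed all the way to $GW(k)$) is the content of \prettyref{prop:gw} and \prettyref{lem:mod1}. So the proof reads: (1) conjugation by $\diag(u,1)$ on $SL_2(k)$ agrees on homology with conjugation by $\diag(u,u^{-1})\in SL_2(k)$ when $u$ is replaced by $u^2$, hence is inner, hence trivial, giving the factorization through $k^\times/(k^\times)^2$; (2) the inclusion $SL_2(k)\hookrightarrow SL_2(k[\Delta^\bullet])$ is $k^\times$-equivariant by naturality of the $\mathbb{G}_m$-action in the coefficients, so the induced map on $H_3$ of classifying spaces is equivariant for the $\mathbb{Z}[k^\times/(k^\times)^2]$-module structures, where on the target we restrict the $GW(k)$-structure along $\mathbb{Z}[k^\times/(k^\times)^2]\twoheadrightarrow GW(k)$ of \prettyref{prop:gwk}.
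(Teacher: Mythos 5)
Your proposal is correct and follows essentially the same route as the paper: squares act through conjugation by $\operatorname{diag}(u,u^{-1})\in SL_2(k)$ (since the central factor $\operatorname{diag}(u,u)$ conjugates trivially), hence innerly and so trivially on homology, and equivariance of the change-of-topology map follows from the naturality of the $\mathbb{G}_m$-action in the coefficient ring recorded in \prettyref{const:action}, with the target module structure obtained by restriction along $\mathbb{Z}[k^\times/(k^\times)^2]\to GW(k)$. The small caveat you raise about $\operatorname{diag}(u,u)\notin SL_2$ is resolved exactly as you say, and the paper's own proof glosses it in the same one line.
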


\begin{proof}
For any unit $u\in k^\times$, the conjugation action of $u^2$ is
the same as conjugating with $\operatorname{diag}(u,u^{-1})\in
SL_2(k)$. The squares therefore act via inner automorphisms, hence
trivially on the homology. Homology groups are abelian groups, so the
action of $k^\times/(k^\times)^2$ can be extended to the group ring
linearly. 

The change-of-topology morphism is induced from the inclusion of
bisimplicial sets $BSL_2(k)\rightarrow BSL_2(k[\Delta^\bullet])$,
where the first bisimplicial set is constant in the
$\Delta^\bullet$-direction. The degree-wise morphisms
$BSL_2(k)\rightarrow BSL_2(k[\Delta^n])$ are induced from the
inclusion of the constants $k\hookrightarrow
k[\Delta^n]$. Equivariance for the $k^\times$-module structures is
then clear. The $\mathbb{Z}[k^\times/(k^\times)^2]$-module structure
on $H_3(SL_2(k))$ has been described above. The
$\mathbb{Z}[k^\times/(k^\times)^2]$-module structure on
$H_3(BSL_2(k[\Delta^\bullet]))$ comes from the $GW(k)$-module
structure in the previous proposition composed with
$\mathbb{Z}[k^\times/(k^\times)^2]\rightarrow GW(k)$. 
The corresponding equivariance is then also clear.
\end{proof}

We state an obvious corollary:

\begin{corollary}
\label{cor:factor}
Assume that $k$ is a infinite perfect field of characteristic $\neq 2$.
We have the following factorization of the change-of-topology
morphism: 
$$H_3(SL_2(k),\mathbb{Z})\rightarrow
H_3(SL_2(k),\mathbb{Z})\otimes_{\mathbb{Z}[k^\times/(k^\times)^2]}GW(k)\to 
H_3(BSL_2(k[\Delta^\bullet]),\mathbb{Z}).$$
\end{corollary}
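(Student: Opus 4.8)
The plan is to deduce the factorization formally from the universal property of extension of scalars along the surjection $\grsc{k}\twoheadrightarrow GW(k)$ of \prettyref{prop:gwk}. Write $R=\grsc{k}$, let $\phi\colon R\to GW(k)$ denote that ring homomorphism, and set $M=H_3(SL_2(k),\mathbb{Z})$ and $N=H_3(BSL_2(k[\Delta^\bullet]),\mathbb{Z})$, both regarded as $R$-modules via the conjugation actions described above.

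First I would isolate the two inputs that drive the argument. By \prettyref{prop:gw} the module $N$ carries a $GW(k)$-module structure, and --- as already observed in the proof of the Lemma immediately preceding this corollary --- its $R$-module structure is obtained from this $GW(k)$-module structure by restriction of scalars along $\phi$. By that same Lemma, the change-of-topology morphism $c\colon M\to N$ is a homomorphism of $R$-modules. These are exactly the hypotheses needed to invoke extension of scalars.

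Given this, the factorization is immediate: there is a canonical $R$-linear map $\eta\colon M\to M\otimes_R GW(k)$, $m\mapsto m\otimes 1$, and the universal property produces a unique $GW(k)$-linear (in particular $R$-linear) map $\bar c\colon M\otimes_R GW(k)\to N$ with $\bar c\circ\eta=c$, namely $\bar c(m\otimes\xi)=\xi\cdot c(m)$. The composite $M\xrightarrow{\ \eta\ }M\otimes_R GW(k)\xrightarrow{\ \bar c\ }N$ is then the change-of-topology morphism, which is the asserted factorization. There is no genuine obstacle here --- the corollary is, as flagged in the text, immediate --- since all the substantive content lies in \prettyref{prop:gw} (that the target module structure is pulled back from $GW(k)$) and in the preceding Lemma ($R$-equivariance of the change-of-topology map). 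It is worth noting for later use that surjectivity of $\phi$ is not needed for the factorization itself, but it does identify $M\otimes_R GW(k)$ with the quotient $M/\mathcal{J}_kM$ by the submodule generated by the Steinberg elements $\pf{a}\pf{1-a}$, which is the shape in which the corollary is applied in the subsequent sections.
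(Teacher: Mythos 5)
Your argument is correct and is exactly the (implicit) argument of the paper, which states this corollary without proof as an ``obvious'' consequence of \prettyref{prop:gw} and the preceding equivariance lemma: the change-of-topology map is $\grsc{k}$-linear, its target's module structure is restricted along $\grsc{k}\to GW(k)$, so the universal property of $-\otimes_{\grsc{k}}GW(k)$ yields the factorization. Your closing remark identifying $M\otimes_{\grsc{k}}GW(k)$ with $M/\mathcal{J}_kM$ is also accurate and is indeed how the corollary is used later.
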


\section{Bloch groups and specialization homomorphisms}
\label{sec:bloch}

We review the relationship between $H_3(SL_2(k),\Z)$ and the refined
Bloch group, $\mathcal{RB}(k)$ and we use this relationship to compute
lower bounds for the kernel of the map  
\[
H_3(SL_2(k),\Z[1/2]) \to H_3(SL_2(k),\Z[1/2])\otimes_{\Z[k^\times/(k^\times)^2]}GW(k).
\]

We begin by recalling that for any infinite field $k$ there is a natural surjective homomorphism 
$H_3(SL_2(k),\Z)\to \indk{k}$ (\cite[Lemma 5.1]{hutchinson:h3sl}) which induces an isomorphism 
\[
H_3(SL_2(k),\Z[1/2])\otimes_{\grsc{k}}\Z\cong \indk{k}\otimes\Z[1/2]
\]
(\cite{mirzaii:h3gl}).

Recall from \prettyref{sec:mod} that
$\mathcal{I}_k\subseteq\mathbb{Z}[k^\times/(k^\times)^2]$ is the
augmentation ideal and
$\mathcal{J}_k\subseteq\mathbb{Z}[k^\times/(k^\times)^2]$ is the
kernel of $\mathbb{Z}[k^\times/(k^\times)^2]\to GW(k)$. 
Thus, if $M$ is a $\grsc{k}$-module we have 
\[
\mathcal{I}_kM=\ker(M\to M\otimes_{\grsc{k}}\Z)
\]
and
\[
\mathcal{J}_kM=\ker(M\to M\otimes_{\grsc{k}}GW(k)).
\]
In particular, the natural map 
\[
M\otimes_{\grsc{k}}GW(k)\to M\otimes_{\grsc{k}}\Z
\]
(induced by $GW(k)\to \Z$) is an isomorphism if and only if
$\mathcal{J}_kM=\mathcal{I}_kM$. 

For a field $k$ with at least $4$ elements, the \emph{scissors congruence group} or 
\emph{pre-Bloch group}, $\pb{k}$, is the $\Z$-module with generators 
$[a]$, $a\in k^\times$, subject to the relations 
\begin{enumerate}
\item $[1]=0$, and 
\item 
\[
[x]-[y]+\left[\frac{y}{x}\right]-\left[\frac{1-x^{-1}}{1-y^{-1}}\right]+\left[\frac{1-x}{1-y}\right] \mbox{ for }
x,y\not= 1.
\]
\end{enumerate}

The \emph{refined pre-Bloch group}, $\rpb{k}$, is the 
$\Z[k^\times/(k^\times)^2]$-module with generators 
$[a]$, $a\in k^\times$, subject to the relations 
\begin{enumerate}
\item $[1]=0$, and 
\item 
\[
[x]-[y]+\sqc{x}\left[\frac{y}{x}\right]-\sqc{x^{-1}-1}\left[\frac{1-x^{-1}}{1-y^{-1}}\right]+
\sqc{1-x}\left[\frac{1-x}{1-y}\right] \mbox{ for }
x,y\not= 1.
\]
\end{enumerate}

We let 
\[
S_2(k):=\frac{k^\times\otimes_\Z k^\times}{\langle \{ x\otimes y+y\otimes x| x,y\in k^\times\}\rangle},
\]
the second (graded) symmetric power. We let $x\circ y$ denote the image of $x\otimes y$ in $S_2(k)$. We endow 
$S_2(k)$ with the trivial $\Z[k^\times/(k^\times)^2]$-module structure. 

The \emph{refined Bloch group}, $\rbl{k}$, of the field $k$ (with at least $4$ elements) is the kernel of the 
$\grsc{k}$-module homorphism $\Lambda$:
\begin{eqnarray*}
\Lambda=(\lambda_1,\lambda_2):\rpb{k}& \to &\mathcal{I}_k^2\oplus S_2(k),\\
\ [a] & \mapsto & \left( \pf{a}\pf{1-a},a\circ (1-a)\right).\\
\end{eqnarray*}
 
The following is main result (Theorem 4.3 (1)) of \cite{hutchinson:bw}:
\begin{proposition} 
For an infinite field $k$ there is a natural complex 
\[
0\to \mathrm{Tor}(\mu_k,\mu_k)\to H_3(SL_2(k),\Z)\to \rbl{k}\to 0
\]
of $\grsc{k}$-modules which is exact except possibly at the middle term where the homology is annihilated by $4$. 
\end{proposition}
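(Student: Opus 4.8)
The plan is to reconstruct the argument of \cite{hutchinson:bw} via a \emph{refined Bloch--Wigner complex}. I would let $SL_2(k)$ act on the configurations of distinct points of $\mathbb{P}^1(k)$ (equivalently, of nonzero vectors in $k^2$ with any two linearly independent), and form the complex $C_\bullet$ of permutation modules $C_p=\mathbb{Z}\bigl[\{\text{ordered }(p{+}1)\text{-tuples of distinct points}\}\bigr]$ with the alternating-sum differential, augmented over $\mathbb{Z}$. Since $k$ is infinite, this ``complex of injective words'' is exact in all the degrees that matter here, so it yields a first-quadrant hyperhomology spectral sequence
\[
E^1_{p,q}=H_q(SL_2(k),C_p)\;\Longrightarrow\;H_{p+q}(SL_2(k),\mathbb{Z}).
\]
The conjugation action of $k^\times$ on $SL_2(k)$ from \prettyref{const:action} is compatible with the permutation action of $k^\times$ on $\mathbb{P}^1(k)$ via $[x:y]\mapsto[ux:y]$, so the whole spectral sequence — and hence $H_3(SL_2(k),\mathbb{Z})$ — carries a compatible $\grsc{k}$-module structure.

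First I would decompose each $C_p$ into $SL_2(k)$-orbits and apply Shapiro's lemma. The stabilizers are the Borel subgroup for a single point, the torus $\mathbb{G}_m(k)=k^\times$ for an ordered pair, and the centre $\mu_2=\{\pm I\}$ for a generic ordered triple or quadruple. The structural input I would use is that $SL_2(k)\to PGL_2(k)$ has cokernel $k^\times/(k^\times)^2$: the orbit sets of triples and quadruples are then naturally torsors, respectively free modules, over $k^\times/(k^\times)^2$, which is precisely the source of the $\grsc{k}$-structure. Parametrising the fourth point of a quadruple by a cross-ratio $a\in k^\times$ and recording the square-class data, the relevant columns of $E^1$ in total degree $3$, modulo the image of $d^1$ coming from quintuples (the five-term relation), reproduce exactly the refined pre-Bloch group $\rpb{k}$; the refined Bloch group $\rbl{k}=\ker\Lambda$ then appears by comparison with the neighbouring columns, the conditions $\lambda_1,\lambda_2$ valued in $\mathcal{I}_k^2$ and $S_2(k)$ being the remaining $d^1$-differentials, which are built from $H_1$ and $H_0$ of the torus $k^\times$. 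I would also simplify the $p=0$ column by a weight argument: the unipotent radical of the Borel contributes nothing to its homology, so $E^1_{0,q}\cong H_q(k^\times,\mathbb{Z})\cong E^1_{1,q}$.

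Next I would carry out the low-degree bookkeeping: compute $E^1_{p,q}$ for $p+q\leq 3$, identify the $d^1$- and $d^2$-differentials, and read off the associated graded of $H_3(SL_2(k),\mathbb{Z})$. I expect this to produce a two-step filtration with top quotient $\rbl{k}$ and sub the torsion term, which one identifies with $\mathrm{Tor}(\mu_k,\mu_k)$ by locating it inside $H_3(k^\times,\mathbb{Z})$: the torsion subgroup of $H_3$ of an abelian group $A$ is $\mathrm{Tor}(A_{\mathrm{tors}},A_{\mathrm{tors}})$, and $(k^\times)_{\mathrm{tors}}=\mu_k$. This is the integral refinement of Suslin's identification of the Bloch-group sequence for $H_3(SL_2)$, and naturality in $k$ is automatic since configurations, orbits and stabilizers are all functorial in the field.

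The hard part will be the $2$-primary analysis, which is exactly why the middle homology is only annihilated by $4$ rather than zero. The $\mu_2$-stabilizers of triples and quadruples contribute terms $H_q(\mathbb{Z}/2,\mathbb{Z})$ that do not vanish integrally, and the central element $-I\in SL_2(k)$ interacts with the differentials in ways invisible after inverting $2$. I expect the main work to lie in bounding the resulting indeterminacy in the middle position of the complex and showing it is killed by $4$ — morally two powers of $2$, one from each of the two $\mu_2$-columns involved. After inverting $2$ these contributions disappear and the spectral sequence degenerates enough to give the genuinely exact sequence used in the rest of the paper.
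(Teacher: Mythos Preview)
The paper does not prove this proposition at all: it simply cites it as ``the main result (Theorem 4.3 (1)) of \cite{hutchinson:bw}''. So there is no proof in the paper to compare against; your sketch is essentially an outline of the argument of the cited reference, and as such it is on the right track.

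Your strategy --- the hyperhomology spectral sequence for the $SL_2(k)$-action on the complex of distinct tuples in $\mathbb{P}^1(k)$, Shapiro's lemma with stabilizers Borel/torus/centre, orbit parametrization via square classes and cross-ratio, identification of $\rbl{k}$ on the $E^2$-page, and the appearance of $\mathrm{Tor}(\mu_k,\mu_k)$ from the torus column --- is indeed the skeleton of the proof in \cite{hutchinson:bw}. A couple of points worth sharpening if you want to turn this into an actual proof rather than a plan: first, the reference works with the complex of tuples of nonzero vectors in $k^2$ in general position (not directly points of $\mathbb{P}^1(k)$), which makes the $\grsc{k}$-module structure and the refined symbols $\sqc{a}[b]$ emerge more transparently; the two complexes are related but not identical, and some of the $2$-torsion bookkeeping depends on which one you use. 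Second, your heuristic that ``two powers of $2$, one from each $\mu_2$-column'' explains the bound $4$ is morally right but not a proof; in \cite{hutchinson:bw} the bound comes from an explicit analysis of specific elements (essentially the $\psi_1(x)$ and related classes) and how they interact with the differentials, and this is genuinely the delicate part of the argument. Finally, your claim that the unipotent radical contributes nothing to $E^1_{0,q}$ needs the centre-kills argument to handle the action of $-I$, not just a weight argument, since the coefficients are integral.
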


For $1\not= x\in k^\times$, the element $[x]+[1-x]\in \pb{k}$ is independent of $x$ and has order dividing $6$ 
(\cite[Lemma 1.3, Lemma 1.5]{suslin:k3}). We denote this constant
element $[x]+[1-x]$ by $\mathcal{C}_k$. Furthermore, by 
\cite[Lemma 1.2]{suslin:k3}, for 
$x\in k^\times$ the elements $\psi(x):=[x]+[x^{-1}]\in \pb{k}$ satisfy $2\psi(x)=0$ and $\psi(xy)=\psi(x)+\psi(y)$. 
We denote by $\mathcal{S}_k$  the group $\{ \psi(x) | x\in k^\times\}\subset \pb{k}$ and by $\tilde{\mathcal{P}}(k)$ 
the group $\pb{k}/\mathcal{S}_k$. Observe that the natural map $\pb{k}\to \tilde{\mathcal{P}}(k)$ induces an 
isomorphism $\pb{k}\otimes\Z[1/2]\cong \tilde{\mathcal{P}}(k)\otimes\Z[1/2]$.
 
Similarly, we let $\psi_1(x)$ denote the element
$[x]+\sqc{-1}[x^{-1}]\in \rpb{k}$. Note that these elements are not generally 
of finite order. Let $\rrpb{k}$ denote the $\grsc{k}$-module obtained
by taking the quotient of $\rpb{k}$ modulo the  
$\grsc{k}$-module generated by the set $\{ \psi_1(x)| x\in k^\times\}$
and let $\rrbl{k}$ denote the image of the  
composite map $\rbl{k}\to\rpb{k}\to \rrpb{k}$. Then (\cite[Lemma
4.1]{hutchinson:rb}) we have: 
\begin{lemma}\label{lem:rrbl}
The natural map $\rbl{k}\to \rrbl{k}$ is surjective with kernel annihilated by $4$. In particular, 
it induces an isomorphism $\rbl{k}\otimes\Z[1/2]\cong\rrbl{k}\otimes\Z[1/2]$.
\end{lemma}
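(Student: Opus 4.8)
The plan is to reduce everything to a computation inside $\rpb{k}$. Surjectivity of $\rbl{k}\to\rrbl{k}$ is formal: by definition $\rrbl{k}$ is the image of the composite $\rbl{k}\to\rpb{k}\to\rrpb{k}$. So write $N\subseteq\rpb{k}$ for the $\grsc{k}$-submodule generated by the elements $\psi_1(x)=[x]+\sqc{-1}[x^{-1}]$, so that $\rrpb{k}=\rpb{k}/N$ and the kernel of $\rbl{k}\to\rrbl{k}$ is precisely $\rbl{k}\cap N=\ker(\Lambda|_N)$, where $\Lambda=(\lambda_1,\lambda_2)$ is the map defining $\rbl{k}$. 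It therefore suffices to show $4\cdot\ker(\Lambda|_N)=0$; the ``in particular'' then drops out by tensoring the short exact sequence $0\to\ker(\Lambda|_N)\to\rbl{k}\to\rrbl{k}\to0$ with the flat $\Z$-algebra $\Z[1/2]$.

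Next I would compute $\Lambda$ on the generators of $N$. Starting from $\Lambda([a])=\bigl(\pf{a}\pf{1-a},\,a\circ(1-a)\bigr)$ and using $\sqc{x^{-1}}=\sqc{x}$, $\sqc{1-x^{-1}}=\sqc{-1}\sqc{x}\sqc{1-x}$, the identity $\pf{a}^2=-2\pf{a}$ in $\grsc{k}$, and the graded-symmetry relations of $S_2(k)$ (so that $a\circ a$ and $a\circ(-1)$ are $2$-torsion), one obtains
$$
\lambda_1(\psi_1(x))=-(1+\sqc{-1})\pf{x},\qquad \lambda_2(\psi_1(x))=x\circ(-x).
$$
Two consequences matter. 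First, since $\lambda_2$ is $\grsc{k}$-linear into the trivial module $S_2(k)$, $\lambda_2|_N$ factors through $N\otimes_{\grsc{k}}\Z$, and $x\mapsto x\circ(-x)$ is an \emph{additive} homomorphism $k^\times\to S_2(k)$ that kills squares. Second, $\lambda_1(\psi_1(x))$ lies in the ideal $(1+\sqc{-1})\mathcal{I}_k\subseteq\mathcal{I}_k^2$, and one checks, using $\pf a^2=-2\pf a$ repeatedly, that $\lambda_1\circ\psi_1$ is ``additive modulo a correction annihilated by a bounded power of $2$''.

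The heart of the argument is then to show that $N$ is, up to a subgroup annihilated by $4$, a quotient of $k^\times/(k^\times)^2$ (tensored over $\Z$ with a suitable quotient of $\grsc{k}$) on which $\Lambda$ is injective. For this I would compare with the unrefined picture via the augmentation $\grsc{k}\to\Z$: it carries $\rpb{k}$ onto $\pb{k}$, $N$ onto $\mathcal{S}_k$, and $\psi_1(x)$ onto $\psi(x)$, and Suslin's relations $2\psi(x)=0$, $\psi(xy)=\psi(x)+\psi(y)$ hold in $\pb{k}$. Lifting each such relation to $\rpb{k}$ produces an element of $N$; using the two features recorded above one checks that $\Lambda$ annihilates these lifts (the $\lambda_2$-component vanishes by additivity, the $\lambda_1$-component lies in the $(1+\sqc{-1})$-ideal and is $2$-bounded on the relevant submodule). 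A relation chase — specializing the defining five-term relation of $\rpb{k}$ to the ``constant'' configurations that witness $2\psi_1(x)=0$ and $\psi_1(xy)-\psi_1(x)-\psi_1(y)=0$ modulo torsion — then shows that these lifts generate all of $\ker(\Lambda|_N)$ and are annihilated by $4$.

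The main obstacle is exactly this last step. The difficulty is not the algebra of $\Lambda$, which is a finite computation, but proving that \emph{every} relation among the $\psi_1(x)$ inside $\rpb{k}$ is, up to $4$-torsion, forced by additivity together with the relations already visible in $\mathcal{I}_k^2\oplus S_2(k)$; this is where the precise defining relations of the refined pre-Bloch group must be exploited, and where the bound $4$ (rather than $2$) appears, in parallel with the $4$-torsion occurring in the comparison of $H_3(SL_2(k),\Z)$ with $\rbl{k}$ recalled above.
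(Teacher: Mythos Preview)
The paper does not prove this lemma at all: it is simply quoted as \cite[Lemma 4.1]{hutchinson:rb}. So there is no ``paper's own proof'' to compare against, only the argument in that reference.

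Your proposal is not a proof but an outline, and you say so yourself. The reductions are fine: surjectivity is tautological, the kernel is $\rbl{k}\cap N$ with $N$ the $\grsc{k}$-submodule generated by the $\psi_1(x)$, and your formulas
\[
\lambda_1(\psi_1(x))=-(1+\sqc{-1})\pf{x},\qquad \lambda_2(\psi_1(x))=x\circ(-x)
\]
are correct. But the entire content of the lemma is the assertion $4\cdot(\rbl{k}\cap N)=0$, and for this you offer only the phrase ``a relation chase'' together with a plan to lift Suslin's identities $2\psi(x)=0$ and $\psi(xy)=\psi(x)+\psi(y)$ to $\rpb{k}$ and hope they generate $\ker(\Lambda|_N)$. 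You then explicitly flag this as ``the main obstacle.'' That is the gap: nothing you have written forces an arbitrary element $\sum_i\sqc{a_i}\psi_1(x_i)$ in $\ker\Lambda$ to be $4$-torsion. Knowing $\Lambda$ on generators of $N$ tells you what $\Lambda(N)$ looks like, not what $N\cap\ker\Lambda$ looks like; the latter requires control over the relations among the $\psi_1(x)$ \emph{inside} $\rpb{k}$, which in turn requires the refined five-term relation in a nontrivial way. Until that step is actually carried out (as in \cite{hutchinson:rb}), the argument is incomplete.
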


Now suppose that $k$ is an infinite field with  
surjective valuation $v:k^\times \to \Gamma$, where $\Gamma$ is a totally 
ordered additive abelian group,  and corresponding 
residue field $\bar{k}$. Let $\phi:\Gamma\to \Z/2$ be a group homomorphism. For an abelian 
group $A$, we let $A[\phi]$ denote $A$ endowed with the $\Z[k^\times/(k^\times)^2]$-module structure 
\[
\sqc{x}\cdot a:= (-1)^{\phi(v(x))} a \quad \mbox{for all } x\in k^\times, a\in A.
\]

Then we have (\cite[section 4.3]{hutchinson:bw}): 
\begin{proposition}\label{prop:sv}
There is a natural surjective $\grsc{k}$-module 
homomorphism $S_{v,\phi}=S_{\phi}:\rrpb{k}\to \redpb{\bar{k}}[\phi]$ determined by the formula
\[
S_{\phi}([a])=
\left\{
\begin{array}{ll}
[\bar{a}],& v(a)=0\\
\mathcal{C}_{\bar{k}},& v(a)>0\\
-\mathcal{C}_{\bar{k}},& v(a)<0\\
\end{array}
\right.
\] 

Furthermore, \emph{if $\phi\not=0$} the image of the induced composite homomorphism 
\[
H_3(SL_2(k),\Z)\to  \rbl{k}\to \redpb{\bar{k}}[\phi]
\]
 contains $4\cdot\redpb{\bar{k}}$.
\end{proposition}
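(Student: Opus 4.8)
The plan is to define $S_\phi$ on generators, verify the defining relations of $\rrpb{k}$, and then — for the final assertion — exhibit explicit cycles in $\rbl{k}=\ker\Lambda$ whose $S_\phi$-images generate $4\cdot\redpb{\bar k}$. First, define a $\grsc k$-linear map from the free module on the symbols $[a]$, $a\in k^\times$, to $\redpb{\bar k}[\phi]$ by the displayed formula, where $\sqc x$ acts on $\redpb{\bar k}[\phi]$ by $(-1)^{\phi(v(x))}$ and $\bar a$ denotes the residue of a $v$-unit $a$. One checks that this map kills $[1]$, kills each $\psi_1(x)=[x]+\sqc{-1}[x^{-1}]$ — immediate, since $\phi(v(-1))=0$, so the two terms contribute $[\bar x]+[\bar x^{-1}]=\psi(\bar x)\equiv 0$ when $v(x)=0$, and $\mathcal C_{\bar k}-\mathcal C_{\bar k}=0$ when $v(x)\neq 0$ — and kills the five-term relator; it therefore descends to the asserted $\grsc k$-module homomorphism $S_\phi\colon\rrpb{k}\to\redpb{\bar k}[\phi]$. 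Surjectivity is then immediate, because $S_\phi([a])=[\bar a]$ for every $v$-unit $a$ and these classes generate $\redpb{\bar k}$, and naturality in the valued field $k$ is formal.

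The verification for the five-term relator is the main point of the first part. One fixes $x,y\in k^\times\setminus\{1\}$ and argues by cases on $v(x),v(y)$ and, when these vanish, on the residues $\bar x,\bar y$. If $x,y$ and all five arguments of the relator are $v$-units — the generic case — then every square class occurring acts trivially on $\redpb{\bar k}[\phi]$, every symbol reduces to a symbol in $\pb{\bar k}$, and the relator maps to the five-term relator of $\bar x,\bar y$, hence to $0$. In each remaining (degenerate) case some arguments acquire a nonzero valuation and are sent to $\pm\mathcal C_{\bar k}$; one then checks cancellation of the five images using $[\bar a]+[1-\bar a]=\mathcal C_{\bar k}$, $[\bar a]+[\bar a^{-1}]\equiv 0$ in $\redpb{\bar k}$, and the constancy of $\mathcal C_{\bar k}$. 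It is precisely here that the twist by $\phi$ is indispensable: when an argument such as $1-x$ or $x^{-1}-1$ has valuation outside $\ker\phi$, its square class contributes a sign $(-1)^{\phi(v(\cdot))}$, and this is exactly the sign needed to make the $\pm\mathcal C_{\bar k}$ contributions cancel — for instance, with $x=u\pi$, $y=\pi$ ($u$ a $v$-unit, $v(\pi)>0$) the five images are $\mathcal C_{\bar k},\,-\mathcal C_{\bar k},\,-[\bar u^{-1}],\,[\bar u^{-1}],\,0$, summing to $0$. Organizing this sign bookkeeping across the various subcases is the only genuine difficulty in the first part; alternatively one might try to factor $S_\phi$ through classical specialization maps on $K_2$ and $\pb{}$, but the direct check seems cleanest.

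For the final assertion: $H_3(SL_2(k),\Z)\to\rbl{k}$ is surjective by the preceding proposition, $\rbl{k}\to\rrbl{k}$ is surjective by construction, and the composite in the statement factors as $H_3(SL_2(k),\Z)\twoheadrightarrow\rbl{k}\twoheadrightarrow\rrbl{k}\hookrightarrow\rrpb{k}\xrightarrow{S_\phi}\redpb{\bar k}[\phi]$, so its image equals $S_\phi(\rrbl k)$ and it suffices to realize $4[\bar u]$ for each $\bar u\in\bar k^\times\setminus\{1\}$. Since $\phi\neq 0$ one chooses $\pi\in k^\times$ with $v(\pi)>0$ and $\phi(v(\pi))=1$, so that $\sqc\pi$ acts on $\redpb{\bar k}[\phi]$ by $-1$; fix a $v$-unit $u$ lifting $\bar u$ and put
\[
\xi_1=(1-\sqc\pi)\bigl([u]-[1-u]\bigr),\qquad \xi_2=(1-\sqc\pi)\bigl([u^{-1}]-[1-u^{-1}]\bigr).
\]
Because $S_2(k)$ carries the trivial $\grsc k$-action, $\lambda_2$ kills every $(1-\sqc\pi)$-multiple, and because $\lambda_1([a])=\pf a\pf{1-a}=\lambda_1([1-a])$ (commutativity of $\grsc k$) one gets $\lambda_1(\xi_1)=\lambda_1(\xi_2)=0$; hence $\xi_1,\xi_2\in\rbl k$. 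Using $v(1-u)=v(1-u^{-1})=0$ (valid since $\bar u\neq 1$), $[1-\bar u]=\mathcal C_{\bar k}-[\bar u]$ and $[\bar u^{-1}]\equiv-[\bar u]\pmod{\mathcal S_{\bar k}}$, one computes $S_\phi(\xi_1)=4[\bar u]-2\mathcal C_{\bar k}$ and $S_\phi(\xi_2)=-4[\bar u]-2\mathcal C_{\bar k}$ in $\redpb{\bar k}[\phi]$; since $\mathcal C_{\bar k}$ has order dividing $6$, $S_\phi(2\xi_1+\xi_2)=4[\bar u]-6\mathcal C_{\bar k}=4[\bar u]$. As $\bar u$ varies these classes generate $4\cdot\redpb{\bar k}$, which therefore lies in the image. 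The main obstacle in this part is spotting the cycles $\xi_i$ and checking $\Lambda(\xi_i)=0$; once one notices that twisting by $1-\sqc\pi$ both annihilates the (untwisted) $S_2$-component and leaves the symmetric $\lambda_1$-component unchanged, the remaining computation is short.
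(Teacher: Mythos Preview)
The paper does not supply its own proof of this proposition; it is quoted from \cite[Section~4.3]{hutchinson:bw} without argument. There is therefore no in-paper proof to compare against, and I can only assess your argument on its own merits.

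Your proof is correct. The well-definedness check --- killing $[1]$, $\psi_1(x)$, and the five-term relator --- is the natural approach, and your sample degenerate case ($x=u\pi$, $y=\pi$) is computed correctly; the remaining cases are, as you say, routine bookkeeping with $\pm\mathcal C_{\bar k}$ and the identities $[\bar a]+[1-\bar a]=\mathcal C_{\bar k}$, $[\bar a]+[\bar a^{-1}]\equiv 0$. Surjectivity is immediate from the unit case.

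For the second assertion your construction is clean: the key observations that $\lambda_1([u])=\pf{u}\pf{1-u}=\pf{1-u}\pf{u}=\lambda_1([1-u])$ (commutativity of $\grsc{k}$) and that $(1-\sqc{\pi})$ annihilates the trivially-twisted $S_2(k)$ together force $\Lambda(\xi_i)=0$, so $\xi_1,\xi_2\in\rbl{k}$ genuinely. The arithmetic $S_\phi(2\xi_1+\xi_2)=4[\bar u]-6\mathcal C_{\bar k}=4[\bar u]$ is correct, and since the map $H_3(SL_2(k),\Z)\to\rbl{k}$ is surjective by the preceding proposition (the complex is exact at $\rbl{k}$), the image of the composite is exactly $S_\phi(\rrbl{k})$ as you use. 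One small point worth making explicit: you need $\bar u\neq 1$ to ensure $v(1-u)=v(1-u^{-1})=0$, but since $[1]=0$ in $\redpb{\bar k}$ this costs nothing.
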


The following corollary, which follows from the case $\phi=0$ in \ref{prop:sv}, will be needed below:

\begin{corollary}\label{cor:pb}
Let $k$ be a field with valuation and corresponding residue field $\bar{k}$. 
There is a natural surjective homomorphism $\pb{k}\to \redpb{\bar{k}}$.
\end{corollary}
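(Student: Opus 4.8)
The plan is to specialize \prettyref{prop:sv} to $\phi=0$ and then observe that over trivial coefficients the refined pre-Bloch group degenerates to the ordinary one. First I would record that for $\phi=0$ one has $(-1)^{\phi(v(x))}=1$ for every $x\in k^\times$, so that $\redpb{\bar{k}}[\phi]$ is simply $\redpb{\bar{k}}$ equipped with the trivial $k^\times/(k^\times)^2$-action, i.e.\ the action through the augmentation $\grsc{k}\to\Z$. \prettyref{prop:sv} then provides a surjective $\grsc{k}$-module homomorphism $S_{v,0}\colon\rrpb{k}\to\redpb{\bar{k}}$, and precomposing with the canonical quotient $\rpb{k}\twoheadrightarrow\rrpb{k}$ yields a surjective $\grsc{k}$-linear map $g\colon\rpb{k}\to\redpb{\bar{k}}$.

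Next I would use that the target of $g$ carries the trivial $k^\times/(k^\times)^2$-action: for every $x\in k^\times$ and every generator $[b]$ we get $g(\sqc{x}[b])=\sqc{x}\cdot g([b])=g([b])$, so $g$ annihilates the submodule $\mathcal{I}_k\rpb{k}$ and hence factors through $\rpb{k}\otimes_{\grsc{k}}\Z$. The one point that is not purely formal is the identification $\rpb{k}\otimes_{\grsc{k}}\Z\cong\pb{k}$: comparing presentations, the generators $[a]$ are the same, the relation $[1]=0$ is common to both, and applying the augmentation $\sqc{a}\mapsto 1$ to the refined five-term relation of $\rpb{k}$ produces precisely the five-term relation defining $\pb{k}$, with no further relations created. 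Thus $g$ descends to a homomorphism $\pb{k}\to\redpb{\bar{k}}$, which is surjective because $g$ is a composite of surjections.

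For naturality I would simply note that the resulting map is given on generators by $[a]\mapsto[\bar{a}]$ when $v(a)=0$ and by $[a]\mapsto\mathcal{C}_{\bar{k}}$ or $-\mathcal{C}_{\bar{k}}$ according as $v(a)>0$ or $v(a)<0$, so all compatibility statements (with field extensions, with the valuation) are inherited directly from the corresponding assertion in \prettyref{prop:sv}. I do not expect a genuine obstacle in this argument; the only step demanding a little care is making the degeneration $\rpb{k}\otimes_{\grsc{k}}\Z\cong\pb{k}$ precise, namely checking that setting all square-class coefficients equal to $1$ turns each refined relation into its unrefined counterpart and introduces nothing new. One could sidestep even this by verifying directly that the displayed formula for $S_{v,0}$ respects the defining relations of $\pb{k}$, which once more is just the $\phi=0$ case of \prettyref{prop:sv}.
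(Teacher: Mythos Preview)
Your proof is correct and follows essentially the same approach as the paper: set $\phi=0$ in \prettyref{prop:sv}, observe that the target then carries the trivial $\grsc{k}$-action, and conclude that the map factors through the coinvariants $\rpb{k}_{k^\times}=\pb{k}$. The paper compresses this into one sentence, whereas you spell out the identification $\rpb{k}\otimes_{\grsc{k}}\Z\cong\pb{k}$ and the naturality, but the argument is the same.
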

\begin{proof}
When $\phi=0$, $\redpb{\bar{k}}$ has the trivial $\Z[k^\times/(k^\times)^2]$-module structure and hence the 
homomorphism $S_{\phi}$ factors through $\rpb{k}_{k^\times}=\pb{k}$.  
\end{proof}

\begin{corollary}\label{cor:val}
Let $k$ be a field with surjective valuation $v:k^\times\to \Gamma$ 
and residue field $\bar{k}$.  Suppose that
\begin{enumerate}
\item $\Gamma/2\Gamma\not = 0$ and 
\item $16\cdot\redpb{\bar{k}}\not=0$.
\end{enumerate}
Then $\mathcal{J}_kH_3(SL_2(k),\Z)\not=0$.
\end{corollary}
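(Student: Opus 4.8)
The plan is to transport hypothesis (2) backwards along the specialization homomorphism of \prettyref{prop:sv}, exhibiting a nonzero subgroup of $\mathcal{J}_kH_3(SL_2(k),\Z)$. Observe first that $\Gamma\neq 0$ (since $\Gamma/2\Gamma\neq 0$), so $\Gamma$ is an infinite ordered abelian group and $k$ is infinite; thus the results of \prettyref{sec:bloch} apply. Since $\Gamma/2\Gamma$ is a nonzero $\F{2}$-vector space we may choose a homomorphism $\phi\colon\Gamma\to\Z/2$ with $\phi\neq 0$. By \prettyref{prop:sv} in the case $\phi\neq 0$, the composite
$$
f\colon H_3(SL_2(k),\Z)\to\rbl{k}\to\redpb{\bar k}[\phi]
$$
is a $\grsc{k}$-module homomorphism (a composite of module maps) whose image contains $4\cdot\redpb{\bar k}$.

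Next I would exhibit a single Steinberg element acting as multiplication by $4$ on the module $\redpb{\bar k}[\phi]$. Because $\phi\neq 0$ there is $\gamma\in\Gamma$ with $\phi(\gamma)=1$, and since $\phi(-\gamma)=\phi(\gamma)$ in $\Z/2$ we may take $\gamma<0$; choose $a\in k^\times$ with $v(a)=\gamma$, using surjectivity of $v$. As $v(1)=0>v(a)$, the ultrametric inequality forces $v(1-a)=v(a)<0$. Hence in $\redpb{\bar k}[\phi]$ both $\sqc{a}$ and $\sqc{1-a}$ act by $(-1)^{\phi(v(a))}=-1$, so $\pf{a}$ and $\pf{1-a}$ each act by $-2$, and the Steinberg element $\pf{a}\pf{1-a}\in\mathcal{J}_k$ (cf. \prettyref{prop:gwk}) acts by $4$.

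Finally I would combine the two steps. For any $m\in\redpb{\bar k}$ write $4m=f(h)$ with $h\in H_3(SL_2(k),\Z)$; then $\grsc{k}$-equivariance of $f$ gives
$$
16m=(\pf{a}\pf{1-a})\cdot f(h)=f\bigl((\pf{a}\pf{1-a})\cdot h\bigr)\in f\bigl(\mathcal{J}_kH_3(SL_2(k),\Z)\bigr).
$$
Therefore $16\cdot\redpb{\bar k}\subseteq f\bigl(\mathcal{J}_kH_3(SL_2(k),\Z)\bigr)$, and by hypothesis (2) the left-hand group is nonzero, so $\mathcal{J}_kH_3(SL_2(k),\Z)\neq 0$. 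The only delicate point — and the place where both hypotheses really enter — is the middle step: the condition $\Gamma/2\Gamma\neq 0$ is exactly what produces a square class $\sqc{a}$ with $a$ \emph{and} $1-a$ of negative value, equivalently a Steinberg generator of $\mathcal{J}_k$ on which the $\grsc{k}$-action on $\redpb{\bar k}[\phi]$ genuinely fails to descend to $GW(k)$; the resulting factor $16=4\cdot 4$ is simply the product of the "image $\supseteq 4\cdot\redpb{\bar k}$" statement of \prettyref{prop:sv} with this multiplication-by-$4$ action.
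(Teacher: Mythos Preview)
Your proof is correct and follows essentially the same route as the paper's: choose a nonzero character $\phi$, pick a unit $a$ with $\phi(v(a))=1$ and $v(a)<0$ so that both $a$ and $1-a$ have the same negative value, observe that the Steinberg element $\pf{a}\pf{1-a}$ therefore acts as multiplication by $4$ on $\redpb{\bar k}[\phi]$, and combine with the ``image contains $4\cdot\redpb{\bar k}$'' clause of \prettyref{prop:sv} to produce a nonzero element of $\mathcal{J}_kH_3(SL_2(k),\Z)$. The paper phrases this by first choosing $\pi\in\mathcal{O}_v$ with $\phi(v(\pi))=1$ and then working with $\pi^{-1}$, which is exactly your $a$; your direct choice of a negative-value element is a cosmetic simplification, and your remark that hypothesis~(1) forces $k$ to be infinite is a useful point the paper leaves implicit.
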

\begin{proof} Let $\phi:\Gamma\to \Z/2$ be a non-zero homomorphism. Let $y\in \redpb{\bar{k}}$ with $16y\not= 0$. 
There exists $x\in H_3(SL_2(k),\Z)$ with $S_\phi(x)=4y$. 

Choose $\pi \in \mathcal{O}_v$ with $\phi(v(\pi))=1$. 
So $\sqc{\pi}y=-y$ and hence $\pf{\pi}y=\pf{\pi^{-1}}y=-2y$. 

But $v(1-\pi)=0=v(\pi-1)$ and hence 
$v(1-\pi^{-1})=v(\pi^{-1})$ and $\phi(v(1-\pi^{-1}))=1$ also.  Thus $\pf{1-\pi^{-1}}y=-2y$ also. It follows that 
\[
S_\phi(\pf{\pi^{-1}}\pf{1-\pi^{-1}}x)=\pf{\pi^{-1}}\pf{1-\pi^{-1}}S_\phi(x)=(-2)\cdot(-2)\cdot 4y=16y\not=0
\]
and hence $0\not= \pf{\pi^{-1}}\pf{1-\pi^{-1}}x\in \mathcal{J}_kH_3(SL_2(k),\Z)$.
\end{proof}

The following is Theorem 6.19 in \cite{hutchinson:rb}:

\begin{proposition}\label{prop:loc}
Let $k$ be a local field with finite residue field $\bar{k}$ of odd order. If $\mathbb{Q}_3\subset k$, suppose that 
$[k:\mathbb{Q}_3]$ is odd. Then there is an isomorphism of $\grsc{k}$-modules 
\[
H_3(SL_2(k),\Z[1/2])\cong \left( \indk{k}\otimes\Z[1/2]\right)\oplus \left( \pb{\bar{k}}\otimes\Z[1/2]\right).
\]

In this isomorphism, the map from $H_3(SL_2(k),\Z[1/2])$ to the second factor is induced by $S_\phi$ where 
$\phi$ is the nontrivial homomorphism $\Gamma=\Z\to \Z/2$.  

\end{proposition}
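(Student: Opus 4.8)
The plan is to build the splitting of $H_3(SL_2(k),\Z[1/2])$ as a $\grsc{k}$-module from the structural results already assembled, treating the two factors separately. First I would note that by \prettyref{lem:alpha}(2) the coinvariants $H_3(SL_2(k),\Z[1/2])_{k^\times}\cong\indk{k}\otimes\Z[1/2]$, so the first factor is the ``trivial-action part''. For a local field $k$ with finite residue field of odd order, the square-class group $k^\times/(k^\times)^2$ is small — of order $4$ (or $2$ in characteristic $2$, but that is excluded by the odd residue characteristic and the $\Q_3$ hypothesis) — so $\grsc{k}$ after inverting $2$ is a product of copies of $\Z[1/2]$ indexed by characters $k^\times/(k^\times)^2\to\{\pm1\}$, and the module decomposes into isotypic pieces. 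The trivial-isotypic piece is the image of $H_3\to H_3\otimes_{\grsc{k}}\Z$, i.e. $\indk{k}\otimes\Z[1/2]$; what remains is to identify the sum of the non-trivial isotypic pieces with $\pb{\bar{k}}\otimes\Z[1/2]$.

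Next I would bring in the refined Bloch group machinery. By the main result of \cite{hutchinson:bw} quoted above, $H_3(SL_2(k),\Z[1/2])\cong\rbl{k}\otimes\Z[1/2]$, and by \prettyref{lem:rrbl} this is $\rrbl{k}\otimes\Z[1/2]$. I would then analyse $\rrbl{k}\otimes\Z[1/2]$ using the valuation $v\colon k^\times\to\Z$: the nontrivial character $\phi\colon\Z\to\Z/2$ gives the surjection $S_\phi\colon\rrpb{k}\to\redpb{\bar{k}}[\phi]$ of \prettyref{prop:sv}, and restricted to $\rrbl{k}$ its image contains $4\cdot\redpb{\bar{k}}$; since we invert $2$, the induced map $\rrbl{k}\otimes\Z[1/2]\to\pb{\bar{k}}\otimes\Z[1/2]$ is surjective. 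The key point is that this $S_\phi$-component carries exactly the $\phi$-isotypic part, while the unit-valuation part (where $\phi$ vanishes) carries the residue-field-unit information feeding into $\indk{k}$; the other nontrivial character (and the uniformizer-times-unit one) should contribute nothing after inverting $2$ because the relevant residue-field invariants have been accounted for, or more precisely one shows that $\mathcal{J}_k H_3(SL_2(k),\Z[1/2])$ is exactly the kernel and that the complementary summand is $\redpb{\bar{k}}\otimes\Z[1/2]$, which by \cite[\S 6]{hutchinson:rb} is cyclic (this is the $(q+1)'$-computation appearing in \prettyref{thm:main}).

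The main obstacle, and the part I would expect the original proof to handle with a genuine computation rather than formal nonsense, is establishing that the decomposition is a \emph{direct sum} — i.e. producing a splitting of $H_3(SL_2(k),\Z[1/2])\to\pb{\bar{k}}\otimes\Z[1/2]$ and checking the two pieces intersect trivially. Here one cannot avoid understanding $\rrbl{k}$ fairly explicitly over a local field: one needs the precise structure of $\redpb{\bar{k}}$ for $\bar{k}=\F{q}$, the behaviour of the specialization map $S_\phi$ on generators $[a]$ with $v(a)=0$ versus $v(a)\neq0$, and the fact that after $\otimes\Z[1/2]$ the ``wild'' contributions (torsion killed by $4$, the $\Tor(\mu_k,\mu_k)$ term, the ambiguity in \prettyref{lem:rrbl}) all vanish. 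I would organize this by first reducing to $\rrbl{k}\otimes\Z[1/2]$, then using the uniformizer $\pi$ to write down an explicit idempotent in (a localization of) $\grsc{k}$ projecting onto the $\phi$-part, and finally invoking the local computation of \cite[Theorem 6.19]{hutchinson:rb} — whose statement this proposition essentially \emph{is} — so in practice the proof is a citation plus the identification of the second-factor map with $S_\phi$, which follows by tracking the construction through \prettyref{prop:sv}.
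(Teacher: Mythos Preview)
Your final sentence is exactly right: in the paper this proposition carries no proof at all --- it is introduced with the words ``The following is Theorem 6.19 in \cite{hutchinson:rb}'' and simply quoted. Everything preceding that in your proposal (the character decomposition of $\grsc{k}\otimes\Z[1/2]$, the reduction to $\rrbl{k}\otimes\Z[1/2]$, the analysis of $S_\phi$ on isotypic pieces) is a reasonable outline of what the actual argument in \cite{hutchinson:rb} looks like, but none of it appears in the present paper, and you should not present it as the paper's proof. The identification of the second-factor map with $S_\phi$ is likewise part of the statement of the cited theorem rather than something established here.

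One small caution about your sketch itself: you write that the odd residue characteristic together with the $\mathbb{Q}_3$ hypothesis excludes characteristic $2$ and forces $|k^\times/(k^\times)^2|=4$. The $\mathbb{Q}_3$ clause is not about excluding characteristic $2$ (the odd residue order already does that); it is a technical restriction needed in \cite{hutchinson:rb} to control a primitive cube root of unity issue in the $3$-adic case. So that parenthetical is slightly off, though it does not affect the overall shape of the argument.
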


\begin{remark}\label{rem:pbfin} 
If $k$ is a finite field with $q$ elements, then $\pb{k}$ has order $q+1$ and $\pb{k}\otimes\Z[1/2]$ 
is cyclic (\cite[Lemma 7.4]{hutchinson:bw}) of order $(q+1)'$. Here, $n'$ denotes the odd part of the integer 
$n$: $n=2^an'$ with $a\geq 0$ and $n'$ odd. 
\end{remark}

More generally we have the following (\cite[Theorem 6.47]{hutchinson:loc}):

\begin{proposition}\label{prop:hloc}
Let $k_0,k_1,\ldots,k_n=k$ be a sequence of fields satisfying:
\begin{enumerate}
\item For each $i\in \{1,\ldots, n\}$ there is a complete discrete value $v_i$ on $k_i$ with residue field
$k_{i-1}$.
\item $k_0$ is either finite or real-closed or quadratically closed.
\item $\mathrm{char}(k_0)\not= 2$.
\item Either $\mathrm{char}(k)=3$ or $\mathrm{char}(k_0)\not=3$ or $k$ contains 
a primitive cube root of unity. 
\end{enumerate}
Then there is a natural split short exact sequence
\[
0\to 
\bigoplus_{i=0}^{n-1}(\pb{k_i}\otimes\Z[1/2])^{\oplus 2^{n-i-1}}
\to
H_3(SL_2(k),\Z[1/2])
\to
\indk{k}\otimes\Z[1/2]
\to 0.
\]
\end{proposition}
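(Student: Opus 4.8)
The statement to be proved is \prettyref{prop:hloc}, the iterated version of \prettyref{prop:loc}. Since the conclusion is already stated as a theorem of \cite{hutchinson:loc}, what I would actually do here is reduce it to \prettyref{prop:loc} and \prettyref{prop:sv} by induction on the length $n$ of the chain of fields, using the splitting principle coming from the $\grsc{k}$-module structures. The base case $n=0$ is exactly \prettyref{lem:alpha}(2)--(3) interpreted via the decomposition $H_3(SL_2(k_0),\Z[1/2]) = \mathcal{I}_{k_0}H_3 \oplus (H_3 \otimes_{\grsc{k_0}}\Z)$, where the first summand is $\pb{k_0}\otimes\Z[1/2]$ (for $k_0$ finite this uses \prettyref{rem:pbfin} and the refined Bloch group description; for $k_0$ real-closed or quadratically closed one uses \prettyref{lem:alpha}(3) and the fact that $\pb{k_0}\otimes\Z[1/2]$ then vanishes or is accounted for directly) and the second summand is $\indk{k_0}\otimes\Z[1/2]$.

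\emph{Inductive step.} Suppose the result holds for the chain $k_0,\dots,k_{n-1}$, and let $v=v_n$ be the complete discrete valuation on $k=k_n$ with residue field $k_{n-1}$. The key tool is that $k$ carries a $\Z$-valued valuation, so $k^\times/(k^\times)^2$ contains the class of a uniformizer $\pi$, and the nonzero homomorphism $\phi\colon\Gamma=\Z\to\Z/2$ produces via \prettyref{prop:sv} a surjective $\grsc{k}$-module map
\[
S_\phi\colon H_3(SL_2(k),\Z[1/2]) \longrightarrow \pb{k_{n-1}}\otimes\Z[1/2],
\]
whose target, on which $\sqc{\pi}$ acts by $-1$, splits off the part of $H_3$ detected by the valuation. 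Dually, the composite $H_3(SL_2(k),\Z[1/2])\to H_3(SL_2(\mathcal{O}_v/\mathfrak{m}),\dots)$ — or more precisely the residue/specialization machinery of \cite{hutchinson:rb,hutchinson:loc} — decomposes $H_3$ into a ``unramified'' piece mapping isomorphically (after inverting $2$) onto $H_3(SL_2(k_{n-1}),\Z[1/2])$ and a ``ramified'' piece isomorphic to another copy of $H_3(SL_2(k_{n-1}),\Z[1/2])$ twisted by $\sqc{\pi}$; the odd-torsion part of the latter, after extracting the $\indk{}$ contribution which is uniquely divisible-ish in the relevant range, contributes $\pb{k_{n-1}}\otimes\Z[1/2]$ plus a recursive copy of the lower direct sum. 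Feeding the inductive hypothesis for $k_{n-1}$ into both pieces and collecting terms, the multiplicities double according to the rule $2^{n-i-1} = 2\cdot 2^{(n-1)-i-1}$ for $i\le n-2$, with the new index $i=n-1$ contributing a single copy of $\pb{k_{n-1}}\otimes\Z[1/2]=(\pb{k_{n-1}}\otimes\Z[1/2])^{\oplus 2^{0}}$. The quotient is $\indk{k}\otimes\Z[1/2]$ by \prettyref{lem:alpha}(2), since all of $\mathcal{I}_k H_3$ is accounted for by the direct sum, and the sequence splits because each $S_\phi$ and each residue map is split by an explicit section (choice of uniformizer and of a coefficient field), exactly as in \prettyref{prop:loc}.

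\emph{The main obstacle.} The delicate point is keeping careful track of \emph{why} the recursion is exactly a doubling, and in particular ensuring that the ``ramified'' and ``unramified'' copies of $H_3(SL_2(k_{n-1}),\Z[1/2])$ really are independent summands and that no extra $2$-torsion glue appears — this is where hypotheses (3) and (4) (on characteristics and cube roots of unity) enter, controlling the $\indk{}$ contributions and the behaviour of $\mathrm{Tor}(\mu,\mu)$ terms across the residue maps. Verifying that the iterated specialization homomorphisms are compatible with the $\grsc{k_i}$-module structures at each stage, so that the splittings assemble coherently, is the technical heart; once that bookkeeping is in place, the statement follows formally from \prettyref{prop:loc}, \prettyref{prop:sv}, \prettyref{lem:alpha} and \prettyref{lem:rrbl} by induction. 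I would not reproduce the full argument of \cite[Theorem 6.47]{hutchinson:loc} here but rather indicate this reduction and refer to loc. cit. for the detailed compatibility checks.
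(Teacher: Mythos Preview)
The paper does not give a proof of this proposition at all: it is stated as a direct quotation of \cite[Theorem~6.47]{hutchinson:loc}, with no argument supplied. So there is nothing to compare your proposal against in the paper itself; you are attempting to sketch the proof of the cited external result.

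Your inductive outline is broadly the right shape, but two points are off. First, the base case $n=0$ is muddled: when $n=0$ the direct sum $\bigoplus_{i=0}^{-1}$ is empty, so the assertion is simply that $H_3(SL_2(k_0),\Z[1/2])\cong\indk{k_0}\otimes\Z[1/2]$, i.e.\ that $\mathcal{I}_{k_0}H_3=0$ after inverting~$2$. Your invocation of \prettyref{lem:alpha} does not cover this: part~(3) handles only the quadratically closed case, part~(2) gives only the coinvariants, and the lemma explicitly requires $k$ infinite, so the finite-field case is not addressed. For finite and real-closed $k_0$ one needs separate input (computations of $H_3(SL_2(\F{q}))$ and of $H_3(SL_2(\mathbb{R}))$, or the refined Bloch group results of \cite{hutchinson:bw,hutchinson:rb}) which you do not supply.

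Second, the inductive step rests on a decomposition of $H_3(SL_2(k),\Z[1/2])$ into an ``unramified'' and a ``ramified'' copy of $H_3(SL_2(k_{n-1}),\Z[1/2])$. This is precisely the substantive content of \cite[\S6]{hutchinson:loc} and is not a consequence of \prettyref{prop:sv} or \prettyref{prop:loc} as stated here: \prettyref{prop:sv} only gives a surjection onto $\redpb{\bar{k}}$, not a splitting of the full $H_3$, and \prettyref{prop:loc} treats only the case where the residue field is already finite (i.e.\ the bottom of the tower). Your sketch correctly identifies where the difficulty lies, but since you ultimately defer to \cite{hutchinson:loc} for exactly this point, what remains is a citation---which is all the paper provides anyway.
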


\begin{remark}
The direct sum decomposition occurring here is the eigenspace decomposition for the group of characters 
on $k^\times/(k^\times)^2$ which restrict to the trivial character on $k_0^\times/(k_0^\times)^2$ (see 
\cite[section 6]{hutchinson:loc}).

To spell this out, let $k$ be complete with respect to a discrete
valuation $v$ with residue field $\bar{k}$ of  
characteristic not equal to $2$. Let $U:=\{ a\in k^\times|\ v(a)=0\}$. By Hensel's Lemma $u\in U$ is square if 
and only if $\bar{u}\in \bar{k}^\times$ is a square. Thus $U/U^2\cong \bar{k}^\times/(\bar{k}^\times)^2$, and if 
$\pi\in k^\times$ is a uniformizer there is a natural (split) short exact sequence 
 \[
1\to \bar{k}^\times/(\bar{k}^\times)^2\to k^\times/(k^\times)^2\to \pi^{\Z/2}\to 1
\]
where the first injection is obtained by choosing an inverse image $x$ in $U$ of a given element $\bar{x}\in \bar{k}$.

Now let $\mathcal{X}_k:=\mathrm{Hom}(k^\times/(k^\times)^2,\mu_2)$. As noted, the conditions on $k$ in the proposition 
(completeness of $v_i$ and  $\mathrm{char}(k_0)\not=2$) ensure that there are natural injective maps 
\[
k_{i-1}^\times/(k_{i-1}^\times)^2\to k_{i}^\times/(k_{i}^\times)^2
\] 
and hence there are surjective restriction homomorphisms 
\[
\mathcal{X}_{k_i} \to \mathcal{X}_{k_{i-1}}.
\]
For each $i\leq n$, let 
\[
W_i:=\ker(\mathcal{X}_{k} \to \mathcal{X}_{k_{i}})=\{ \chi\in \mathcal{X}_k |\  \chi|_{k_i^\times}=1\}.
\]
 For each $i <n$ and $\chi\in W_i\setminus W_{i+1}$, 
the $\chi$-eigenspace of $H_3(SL_2(k),\Z[1/2])$ -- which we will denote $H_3(SL_2(k),\Z[1/2])_\chi$ -- 
is isomorphic to $\pb{k_i}\otimes\Z[1/2]$, and, by definition, the square class 
$\sqc{a}$ acts as multiplication by $\chi(a)$ on this factor. 
\end{remark}
\begin{lemma}\label{lem:mchi}
Let $k$ be as in Proposition \ref{prop:hloc}. Let $M$ be a $\grsc{k}\otimes\Z[1/2]$-module. Let $i<n$ and 
$\chi\in W_i\setminus W_{i+1}$. Then $\mathcal{J}_kM_\chi=M_\chi$. 
\end{lemma}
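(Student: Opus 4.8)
\textbf{Proof proposal for Lemma \ref{lem:mchi}.}

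The plan is to reduce the statement to a pointwise computation in the group ring after localizing at $2$. Recall that for a $\grsc{k}\otimes\Z[1/2]$-module $M$, we have $\mathcal{J}_kM_\chi=\ker(M_\chi\to M_\chi\otimes_{\grsc{k}}GW(k))$, so the equality $\mathcal{J}_kM_\chi=M_\chi$ is equivalent to the vanishing $M_\chi\otimes_{\grsc{k}}GW(k)=0$. By Proposition \ref{prop:gwk}, $\mathcal{J}_k$ is the ideal of $\grsc{k}$ generated by the Steinberg elements $\pf{a}\pf{1-a}$, so it suffices to exhibit, for each $\chi\in W_i\setminus W_{i+1}$, a single Steinberg element $\pf{a}\pf{1-a}$ which acts as a unit (after inverting $2$) on the $\chi$-eigenspace.

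First I would unwind the module structure on $M_\chi$: by definition of the eigenspace decomposition, the square class $\sqc{a}$ acts on $M_\chi$ as multiplication by the scalar $\chi(a)\in\{\pm 1\}$, so $\pf{a}$ acts as $\chi(a)-1$, which is $0$ if $\chi(a)=1$ and $-2$ if $\chi(a)=-1$. Consequently a Steinberg element $\pf{a}\pf{1-a}$ acts on $M_\chi$ as the scalar $(\chi(a)-1)(\chi(1-a)-1)$, which equals $4$ precisely when $\chi(a)=\chi(1-a)=-1$, and $0$ otherwise. Since $4$ is a unit in $\Z[1/2]$, the whole lemma therefore comes down to the following claim: for every $\chi\in W_i\setminus W_{i+1}$ there exists $a\in k^\times$, $a\neq 1$, with $\chi(a)=\chi(1-a)=-1$.

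Next I would prove this claim using the tower structure. Since $\chi\in W_i\setminus W_{i+1}$, the character $\chi$ is trivial on $k_i^\times$ but nontrivial on $k_{i+1}^\times$; pick $\pi\in k_{i+1}^\times$ with $\chi(\pi)=-1$, where $\pi$ can be taken to be a uniformizer for $v_{i+1}$ (so $v_{i+1}(\pi)=1$ and $\pi$ lies over a unit situation in the lower field). The natural candidate is then a suitable translate such as $a=1+\pi$ or $a=1-\pi$: one of these has $v_{i+1}(a)=0$ and reduces to $1$ in $k_i$, while $1-a=\mp\pi$ has $v_{i+1}(1-a)=1$. Because $\chi$ is trivial on $k_i^\times$ and $\bar a=1$ in $k_i^\times$, the element $a$ — being a unit congruent to $1$ modulo the maximal ideal — is a square in $k_{i+1}^\times$ by Hensel's Lemma (using $\mathrm{char}(k_i)\neq 2$), hence $\chi(a)=1$; that is the wrong sign, so I would instead arrange $a$ so that $a\equiv$ a prescribed non-square unit, or pass to $a=\pi$ itself noting $1-\pi$ is such a unit. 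The correct bookkeeping: take $a=\pi$, so $\chi(a)=-1$ by choice; then $1-a=1-\pi$ is a unit for $v_{i+1}$ with $\overline{1-\pi}=1\in k_i^\times$, so $1-\pi$ is a square and $\chi(1-\pi)=1$ — again the wrong sign. The fix is to use the two-dimensionality of $\mathcal{X}_{k_{i+1}}/\mathcal{X}_{k_i}$-part combined with a second square class: choose $u\in U_{i+1}$ a unit whose residue $\bar u\in k_i^\times$ is a non-square if $\chi$ restricted to the unit part is nontrivial, or else take $a=\pi u$ for appropriate $u$ so that simultaneously $\chi(\pi u)=-1$ and $\chi(1-\pi u)=-1$; since $1-\pi u$ again reduces to $1$ this forces using a different normalization, namely writing $a=c$ with $c,1-c$ both uniformizer-times-units.

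\textbf{The main obstacle} is precisely pinning down the element $a$ in the previous paragraph: one must juggle the short exact sequence $1\to \bar k_i^\times/(\bar k_i^\times)^2\to k_{i+1}^\times/(k_{i+1}^\times)^2\to\pi^{\Z/2}\to 1$ and the fact that $\chi$ is trivial on the lower field to guarantee that \emph{both} $a$ and $1-a$ can be made non-squares for $\chi$. I expect the clean solution is to invoke that $k_{i+1}$ (being complete discretely valued with residue characteristic $\neq 2$, hence having more than $4$ elements and in fact being a field over which $\pb{\cdot}$ is defined) admits, for any nontrivial quadratic character $\chi$ on it that is trivial on a dense/lower subfield, an element $a$ with $\chi(a)=\chi(1-a)=-1$ — this is a standard ``combined non-square'' statement provable by a counting/pigeonhole argument on the quotient $k_{i+1}^\times/(k_{i+1}^\times)^2$, exactly as in the finite-field case underlying Remark \ref{rem:pbfin}. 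Once that element is produced, the Steinberg element $\pf{a}\pf{1-a}$ acts as the unit $4$ on $M_\chi$, giving $\mathcal{J}_kM_\chi=M_\chi$ as desired.
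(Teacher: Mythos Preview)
Your overall strategy is exactly the paper's: reduce to finding $a\in k^\times$ with $\chi(a)=\chi(1-a)=-1$, so that the Steinberg element $\pf{a}\pf{1-a}$ acts as the unit $4$ on $M_\chi$. The gap is that you never actually produce such an $a$, and your proposed fixes cannot work. Since $\chi\in W_i$, the character $\chi$ is trivial on the image of $k_i^\times$ in $k_{i+1}^\times/(k_{i+1}^\times)^2$; by the split exact sequence you quote, this forces $\chi(u)=1$ for \emph{every} unit $u$ of $k_{i+1}$ (equivalently, $\chi$ restricted to $k_{i+1}^\times$ is just $a\mapsto(-1)^{v_{i+1}(a)}$). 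So no choice of ``unit with non-square residue'' or pigeonhole argument on units will ever give $\chi=-1$, and your candidate $a=\pi u$ still has $1-\pi u$ a unit with residue $1$, hence $\chi(1-\pi u)=1$.

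The missing idea, which the paper uses, is simply to invert the uniformizer. Take $x\in k$ lifting a uniformizer of $k_{i+1}$ (so $v_{i+1}(\overline{x})=1$) and set $a=x^{-1}$. Then $\chi(a)=\chi(x^{-1})=-1$, while $1-a=1-x^{-1}=x^{-1}(x-1)$; since $x-1$ has $v_{i+1}$-valuation $0$ we get $\chi(x-1)=1$, so $\chi(1-a)=\chi(x^{-1})\chi(x-1)=(-1)(1)=-1$. (Equivalently: if $v_{i+1}(a)<0$ then $v_{i+1}(1-a)=v_{i+1}(a)$, so taking $a$ of odd negative valuation makes both valuations odd.) This is precisely the trick already used in the proof of Corollary~\ref{cor:val}; once you write it down your argument is complete and matches the paper's.
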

\begin{proof}
Let $\mathcal{O}_n=\{ a\in k=k_n| v_n(a)\geq 0\}$ and for $j<n$ define recursively 
$\mathcal{O}_{j}=\{ a\in \mathcal{O}_{j+1}| v_j(\pi_{j}(a))\geq 0\}$, where $\pi_j:\mathcal{O}_{j+1}\to k_j$ is the 
natural surjection.   Let $x\in \mathcal{O}_{i+1}$ with $v_{i+1}(\pi_{i+1}(x))=1$. Then the group (of order $2$) 
$k_{i+1}^\times/\left((k_{i+1}^\times)^2\cdot k_i^\times\right)$ is generated by the class of $x$ and hence 
$\chi(\sqc{x})=\chi(\sqc{x^{-1}})=-1$. Since 
\[
v_{i+1}(\pi_{i+1}(x-1))=0,
\]
 it follows that the class of 
$x-1$ represents an element - possibly trivial - of $k_i^\times/(k_i^\times)^2$ and 
hence $\chi(\sqc{x-1})=1$. Hence 
\[
\chi(\sqc{1-x^{-1}})=\chi(\sqc{x^{-1}})\chi(\sqc{x-1})=-1\cdot 1 =-1.
\]
Thus $\pf{x^{-1}}$ and $\pf{1-x^{-1}}$ both act on $M_\chi$ as multiplication by $-2$, and hence 
$\pf{x^{-1}}\pf{1-x^{-1}}\in \mathcal{J}_k$ acts on $M_\chi$ as multiplication by $4$. 
 \end{proof}
\begin{corollary}
\label{cor:hloc}
If $k$ is as in Proposition \ref{prop:hloc} then 
\begin{enumerate}
\item
\begin{eqnarray*}
H_3(SL_2(k),\Z[1/2])\otimes_{\grsc{k}}GW(k)&\cong&
H_3(SL_2(k),\Z[1/2])\otimes_{\grsc{k}}\Z\\&\cong&\indk{k}\otimes\Z[1/2] 
\end{eqnarray*}
and 
\item
there is a natural short exact sequence 
\[
0\to 
\bigoplus_{i=0}^{n-1}(\pb{k_i}\otimes\Z[1/2])^{\oplus 2^{n-i-1}}
\to
H_3(SL_2(k),\Z[1/2])\to\phantom{bitofspace}
\]
\[
\phantom{some more space}
\to
H_3(SL_2(k),\Z[1/2])\otimes_{\grsc{k}}GW(k)
\to 0.
\]
\item $\mathcal{J}_kH_3(SL_2(k),\Z[1/2])\cong \bigoplus_{i=0}^{n-1}(\pb{k_i}\otimes\Z[1/2])^{\oplus 2^{n-i-1}}$.
\end{enumerate}
\end{corollary}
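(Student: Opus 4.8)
The plan is to derive all three assertions from the eigenspace decomposition of $M:=H_3(SL_2(k),\Z[1/2])$ recalled in the remark following \prettyref{prop:hloc}, combined with \prettyref{lem:mchi}. Writing $M=\bigoplus_{\chi\in W_0}M_\chi$, the summand $M_1$ (for the trivial character) is the copy of $\indk{k}\otimes\Z[1/2]$, on which $\grsc{k}$ acts trivially, while for $0\leq i<n$ and $\chi\in W_i\setminus W_{i+1}$ one has $M_\chi\cong\pb{k_i}\otimes\Z[1/2]$ with $\sqc{a}$ acting by the sign $\chi(a)$. Since the inclusions $k_{i}^\times/(k_i^\times)^2\hookrightarrow k_{i+1}^\times/(k_{i+1}^\times)^2$ have index $2$, there are $|W_{i+1}|=2^{n-i-1}$ characters in $W_i\setminus W_{i+1}$, so the $\pb{k_i}\otimes\Z[1/2]$-isotypic part of $M$ is exactly $(\pb{k_i}\otimes\Z[1/2])^{\oplus 2^{n-i-1}}$, consistently with \prettyref{prop:hloc}.

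First I would establish (3). The ideals $\mathcal{I}_k$ and $\mathcal{J}_k$ preserve the eigenspace decomposition, so $\mathcal{J}_kM=\bigoplus_{\chi\in W_0}\mathcal{J}_kM_\chi$. On $M_1$ the $\grsc{k}$-action is trivial, hence $\mathcal{J}_kM_1\subseteq\mathcal{I}_kM_1=0$; on each $M_\chi$ with $\chi\in W_i\setminus W_{i+1}$, $i<n$, \prettyref{lem:mchi} gives $\mathcal{J}_kM_\chi=M_\chi$. Summing yields $\mathcal{J}_kM\cong\bigoplus_{i=0}^{n-1}(\pb{k_i}\otimes\Z[1/2])^{\oplus 2^{n-i-1}}$, which is (3).

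Next I would obtain (2) and (1). Recall from the discussion preceding the pre-Bloch group definitions that $\mathcal{J}_kM=\ker(M\to M\otimes_{\grsc{k}}GW(k))$, and that $m\mapsto m\otimes 1$ is surjective since $\grsc{k}\to GW(k)$ is surjective (\prettyref{prop:gwk}); combining this with (3) gives the short exact sequence of (2). For (1), \prettyref{lem:alpha} (2) identifies $M\otimes_{\grsc{k}}\Z$ with $\indk{k}\otimes\Z[1/2]$, and the canonical surjection $M\otimes_{\grsc{k}}GW(k)\to M\otimes_{\grsc{k}}\Z$ is an isomorphism precisely when $\mathcal{J}_kM=\mathcal{I}_kM$. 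The latter equality holds eigenspace by eigenspace: on $M_1$ both ideals act by $0$, and for $\chi\neq 1$ one has $\mathcal{I}_kM_\chi=M_\chi=\mathcal{J}_kM_\chi$, because $\pf{a}$ acts on $M_\chi$ as $0$ or $-2$ with $-2$ invertible after inverting $2$, while $\mathcal{J}_kM_\chi=M_\chi$ already by \prettyref{lem:mchi}. Hence $M\otimes_{\grsc{k}}GW(k)\cong M\otimes_{\grsc{k}}\Z\cong\indk{k}\otimes\Z[1/2]$, giving (1).

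The argument is largely bookkeeping, so I do not anticipate a genuinely hard step; the one point that needs care is the compatibility of $\mathcal{I}_k$ and $\mathcal{J}_k$ with the eigenspace decomposition and the observation that, after inverting $2$, these two ideals act identically on every non-trivial eigenspace. Everything else is immediate from \prettyref{prop:hloc}, \prettyref{lem:mchi}, \prettyref{prop:gwk} and \prettyref{lem:alpha}.
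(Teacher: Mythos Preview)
Your proof is correct and uses the same essential ingredients as the paper's proof: the eigenspace decomposition from the remark after \prettyref{prop:hloc} together with \prettyref{lem:mchi} to compare $\mathcal{I}_kM$ and $\mathcal{J}_kM$. The only difference is the order of derivation---the paper proves (1) first (showing $\mathcal{J}_kM=\mathcal{I}_kM$ via the eigenspace decomposition and \prettyref{lem:mchi}), then deduces (2) from (1) combined with \prettyref{prop:hloc}, and reads off (3) from (2); you instead establish (3) directly, then (2), then (1)---but the substance is identical.
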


\begin{proof}
The second statement follows from the first by Proposition \ref{prop:hloc}, and the third is an immediate consequence 
of the second. 

To prove the first isomorphism of  statement (1), we must show that 
$$
\mathcal{J}_kH_3(SL_2(k),\Z[1/2])=\mathcal{I}_kH_3(SL_2(k),\Z[1/2]).
$$
Now 
\begin{eqnarray*}
\mathcal{I}_kH_3(SL_2(k),\Z[1/2]) &=&
\bigoplus_{i=0}^{n-1}(\pb{k_i}\otimes\Z[1/2])^{\oplus 2^{n-i-1}}\\
&=& \bigoplus_{i=0}^{n-1}\left(\bigoplus_{\chi\in W_i\setminus W_{i+1}}H_3(SL_2(k),\Z[1/2])_\chi\right)\\
\end{eqnarray*}
by Proposition \ref{prop:hloc} and the remark which follows it. The result follows by Lemma \ref{lem:mchi}.
\end{proof}

\begin{remark} 
Let $k$ satisfy the hypotheses of Proposition \ref{prop:hloc}. If
$k_0$ is finite or quadratically closed then the Witt ring $W(k_0)$ of
$k_0$ is $2$-torsion. An easy induction using Springer's Theorem on
Witt rings of fields complete with respect to a discrete valuation
(\cite[Chapter VI, Theorem 1.4]{lam:book}) implies that $W(k)$ is
$2$-torsion and hence that $GW(k)\otimes \Z[1/2]=\Z[1/2]$.   

However, in the case that $k_0$ is real closed, then -- by Springer's
Theorem again -- the fundamental ideal of $W(k)$ contains a free
abelian group of rank $2^n$ and the map $GW(k)\otimes \Z[1/2]\to
\Z[1/2]$ has a large kernel.   
\end{remark}

The following is a special case of \cite[Theorem 5.1]{hutchinson:rb}:

\begin{proposition} \label{prop:ufd}
Let $\mathcal{O}$ be a unique factorization domain with field of
fractions $k$. Let $P$ be a set of representatives of the 
orbits  of prime elements of $k$ under the action, by multiplication, of $\mathcal{O}^\times$. 
For each $p\in P$ there is a
discrete valuation $v_p:k^\times\to \Z$ with corresponding residue
field $\bar{k}_p$. Let $\phi:\Z\to \Z/2$ be the non-zero homomorphism.   

Then the specialization homomorphisms induce a well-defined
\emph{surjective} map of $\grsc{k}$-modules  
\[
S=\sum_{p\in P}S_{p,\phi}: 
H_3(SL_2(k),\Z[1/2])\to \bigoplus_{p\in P}\pb{\bar{k}_{{p}}}\otimes\Z[1/2].
\]
\end{proposition}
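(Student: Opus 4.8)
The plan is to assemble the map $S$ from the individual specialization homomorphisms $S_{p,\phi}$ of \prettyref{prop:sv}, and then prove surjectivity by a degree-counting argument over the set $P$. First I would recall that for each prime $p\in P$ the valuation $v_p:k^\times\to\Z$ gives, via \prettyref{prop:sv} composed with the natural map $H_3(SL_2(k),\Z)\to\rbl{k}\to\rrbl{k}$, a surjective $\grsc{k}$-module homomorphism $H_3(SL_2(k),\Z)\to\redpb{\bar{k}_p}[\phi]$ whose image (after inverting $2$) is all of $\pb{\bar{k}_p}\otimes\Z[1/2]$, using \prettyref{lem:rrbl} to pass between $\rbl{k}$ and $\rrbl{k}$ with $2$ inverted. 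So each component $S_{p,\phi}$ is individually surjective onto $\pb{\bar{k}_p}\otimes\Z[1/2]$; the content of the proposition is that their product lands surjectively on the full direct sum, i.e. that one can simultaneously hit a given target component without disturbing the others.

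The key step is an ``independence of specializations'' argument. For a fixed $p_0\in P$, I would show that an element of $H_3(SL_2(k),\Z[1/2])$ mapping to a prescribed class in $\pb{\bar{k}_{p_0}}\otimes\Z[1/2]$ can be chosen to lie in the kernel of $S_{p,\phi}$ for all $p\neq p_0$. The natural way to do this is to work with explicit generators $[a]\in\rpb{k}$ with $a\in\mathcal{O}^\times\cdot p_0$-style support: choose lifts of the generators $[\bar{a}]$ of $\pb{\bar{k}_{p_0}}$ by elements $a\in k^\times$ which are $v_{p_0}$-units but have the property that for every other $p\in P$ the value $v_p(a)$ and $v_p(1-a)$ are controlled — ideally $v_p(a)=0$ and $v_p(1-a)=0$, so that $S_{p,\phi}([a])=[\bar{a}\bmod p]$ contributes to that component too. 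Here is where the UFD hypothesis does the work: using the Chinese-remainder/strong-approximation flavour available in a UFD (or rather the fact that one only needs to control finitely many primes at a time, since any given element of $k^\times$ has finite support), one adjusts $a$ by units and by elements congruent to $1$ modulo high powers of the finitely many ``bad'' primes so that the unwanted components $S_{p,\phi}([a])$ become trivial or cancel. Since the target is a direct sum, only finitely many components are nonzero for any fixed element, so this finite adjustment suffices.

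The main obstacle I anticipate is precisely this simultaneous control: making an element of $\rrpb{k}$ whose specialization at $p_0$ is a given generator while its specializations at the other relevant primes are zero. The subtlety is that changing $a$ to fix its behaviour at one prime $p$ can spoil its behaviour at $p_0$ or at a third prime $p'$; one needs the relations defining $\pb{\bar{k}_p}$ (the five-term relation) to absorb the discrepancies, together with the fact that the constant $\mathcal{C}_{\bar{k}_p}$ has order dividing $6$, hence is $3$-torsion after inverting $2$, so contributions of the form $\pm\mathcal{C}_{\bar{k}_p}$ coming from $v_p(a)\neq 0$ are well-understood and can be cleared. I would also lean on \prettyref{cor:pb} — the existence of a surjection $\pb{k}\to\redpb{\bar{k}_p}$ for the $\phi=0$ case — and on the already-established surjectivity of each individual $S_{p,\phi}$ from \prettyref{prop:sv}, so that the remaining issue is purely the combinatorics of the supports. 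Finally, one checks $\grsc{k}$-linearity of $S$, which is automatic since each $S_{p,\phi}$ is a $\grsc{k}$-module homomorphism and the target carries the module structure via the characters $\phi\circ v_p$; well-definedness follows because only finitely many $S_{p,\phi}$ are nonzero on any given class, as the divisor of any element of $k^\times$ is finitely supported.
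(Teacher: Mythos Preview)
The paper itself does not prove this proposition; it simply records it as a special case of \cite[Theorem~5.1]{hutchinson:rb}. So there is no in-paper argument to compare against, and your task is really to supply an independent proof.

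There is a genuine gap in your well-definedness argument. You assert that ``only finitely many $S_{p,\phi}$ are nonzero on any given class, as the divisor of any element of $k^\times$ is finitely supported.'' But look again at the formula in \prettyref{prop:sv}: when $v_p(a)=0$ (which happens for all but finitely many $p$) the specialization is $S_{p,\phi}([a])=[\bar a]\in\redpb{\bar{k}_p}$, and there is no reason for $[\bar a]$ to vanish. So a single generator $[a]\in\rrpb{k}$ typically has \emph{nonzero} image in cofinitely many components, and the product map $\prod_{p}S_{p,\phi}$ on $\rrpb{k}$ does \emph{not} land in the direct sum. The nontrivial content of the proposition is precisely that, after restricting to the refined Bloch group $\rrbl{k}$ (equivalently to the image of $H_3(SL_2(k),\Z[1/2])$), these cofinitely many contributions do vanish. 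Proving that requires more than divisor-support bookkeeping: one has to exploit the relations $\lambda_1=\lambda_2=0$ satisfied by elements of $\rbl{k}$, and this is exactly what \cite[Theorem~5.1]{hutchinson:rb} establishes. Your proposal does not address this point.

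This gap also undermines your surjectivity strategy. Your reduction to a ``finite adjustment'' (``since the target is a direct sum, only finitely many components are nonzero for any fixed element'') presupposes the well-definedness you have not proved: without it you are trying to kill infinitely many unwanted components simultaneously, and the Chinese-remainder style modification you sketch only handles finitely many primes at once. If you want to give a self-contained argument, you should first prove directly that for any $\xi\in\rrbl{k}$ one has $S_{p,\phi}(\xi)=0$ for almost all $p\in P$; once that is in hand, the individual surjectivity of each $S_{p,\phi}$ from \prettyref{prop:sv} together with a finite independence-of-primes argument along the lines you describe would finish the job.
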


\begin{corollary}\label{cor:ufd}
 Let $\mathcal{O}$ be a unique factorization domain with field of fractions $k$. 
  Then the map induced by $S$
\[
\mathcal{J}_kH_3(SL_2(k),\Z[1/2])\to \bigoplus_{p\in P}\pb{\bar{k}_{{p}}}\otimes\Z[1/2]
\]
is surjective.

More generally, the map
\[
\mathcal{J}_k(H_3(SL_2(k),\Z)\otimes A)\to \bigoplus_{p\in P}(\pb{\bar{k}_{{p}}}\otimes A)
\]
is surjective for any commutative $\Z[1/2]$-algebra $A$.
\end{corollary}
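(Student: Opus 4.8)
The plan is to deduce both statements from \prettyref{prop:ufd}. For the first assertion, I would show that the image of the restriction $S|_{\mathcal{J}_kH_3(SL_2(k),\Z[1/2])}$ is all of $\bigoplus_{p\in P}\pb{\bar{k}_p}\otimes\Z[1/2]$. The idea is that for each prime $p$, the target factor $\pb{\bar k_p}\otimes\Z[1/2]$ carries the $\grsc{k}$-module structure $\pb{\bar k_p}[\phi_p]$, where $\phi_p$ is the composite of $v_p$ with the nonzero map $\Z\to\Z/2$; on this factor, picking a uniformizer $\pi$ with $\phi_p(v_p(\pi))=1$ and noting (as in the proof of \prettyref{cor:val}) that $v_p(1-\pi^{-1})=v_p(\pi^{-1})$, one gets $\sqc{\pi}$, $\sqc{\pi^{-1}}$ and $\sqc{1-\pi^{-1}}$ all acting by $-1$, while on the \emph{other} summands $\pb{\bar k_q}\otimes\Z[1/2]$ with $q\neq p$ the element $\pi$ is a unit with residue a square (up to adjusting by a unit of $\mathcal O$), so $\sqc{\pi}$ acts trivially there. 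Hence $\pf{\pi^{-1}}\pf{1-\pi^{-1}}\in\mathcal{J}_k$ acts as multiplication by $(-2)(-2)=4$ on the $p$-th factor and by $0$ on all others. Since $4$ is invertible in $\Z[1/2]$, applying this Steinberg element to a preimage (under the surjection $S$ of \prettyref{prop:ufd}) of $y\in\pb{\bar k_p}\otimes\Z[1/2]$ produces an element of $\mathcal{J}_kH_3(SL_2(k),\Z[1/2])$ whose image under $S$ is $4y$ in the $p$-th slot and zero elsewhere. As $p$ and $y$ vary, these generate the whole direct sum, proving surjectivity.

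For the more general statement with an arbitrary commutative $\Z[1/2]$-algebra $A$, I would first observe that $H_3(SL_2(k),\Z)\otimes A\cong H_3(SL_2(k),\Z[1/2])\otimes_{\Z[1/2]}A$ and that, since $A$ is a $\Z[1/2]$-algebra, the maps of \prettyref{prop:ufd} and the action of the Steinberg elements $\pf{\pi^{-1}}\pf{1-\pi^{-1}}$ are all compatible with $-\otimes_{\Z[1/2]}A$. Thus $S\otimes\mathrm{id}_A$ remains surjective onto $\bigoplus_{p\in P}\pb{\bar k_p}\otimes A$, and the very same argument — multiply a preimage by $\pf{\pi^{-1}}\pf{1-\pi^{-1}}$, which acts by the unit $4\in A^\times$ on the $p$-th factor and by $0$ on the rest — shows the restriction of $S\otimes\mathrm{id}_A$ to $\mathcal{J}_k(H_3(SL_2(k),\Z)\otimes A)$ is surjective.

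The main obstacle is the bookkeeping showing that a single Steinberg element $\pf{\pi^{-1}}\pf{1-\pi^{-1}}$ isolates the $p$-th summand, i.e.\ that the chosen uniformizer at $p$ behaves as a square-class-trivial unit at every other prime $q\in P$. This rests on the fact that $P$ is a set of orbit representatives of primes under $\mathcal O^\times$, so $\pi$ can be taken to be an actual prime element $p$ of $\mathcal O$, which is then a unit in the localization at any $q\neq p$ — but one must be slightly careful that its image in $\bar k_q$ need not be a square, only that $\sqc{\pi}$ acts by a \emph{sign} there, and that sign is $+1$ precisely because $\phi_q(v_q(\pi))=\phi_q(0)=0$. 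Once this is pinned down, the rest is the routine argument already rehearsed in \prettyref{cor:val}.
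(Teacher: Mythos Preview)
Your proposal is correct and uses the same core idea as the paper: by \prettyref{prop:ufd} the map $S$ is surjective, so it suffices to show $\mathcal{J}_k\mathcal{P}(\mathcal{O})=\mathcal{P}(\mathcal{O})$ for $\mathcal{P}(\mathcal{O}):=\bigoplus_{p\in P}\pb{\bar k_p}\otimes\Z[1/2]$, and this follows because a well-chosen Steinberg element $\pf{b}\pf{1-b}$ acts as multiplication by the unit $4$. The paper's execution is marginally simpler: given $x=\sum_{i=1}^t x_i$ supported on primes $p_1,\dots,p_t$, it picks a single $a\in\mathcal{O}$ with $v_{p_i}(a)$ odd for all $i$ (e.g.\ $a=p_1\cdots p_t$), sets $b=1/a$, and obtains $x=\pf{b}\pf{1-b}(x/4)$ directly---there is no need to isolate one summand at a time or to verify what the Steinberg element does on the \emph{other} summands, since $x$ is already zero there.
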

\begin{proof}
We will use the notation  
$$\mathcal{P}(\mathcal{O}):=\bigoplus_{p\in
  P}\pb{\bar{k}_{{p}}}\otimes\Z[1/2].
$$
 By \prettyref{prop:ufd},  
it is enough to show that $\mathcal{J}_k\mathcal{P}(\mathcal{O})=\mathcal{P}(\mathcal{O})$. 

Let $x\in \mathcal{P}(\mathcal{O})$. There exist 
primes ${p}_1,\ldots,{p}_t\in P$  such that $x=\sum_{i=1}^tx_i$ with 
$x_i\in \pb{\bar{k}_{{p_i}}}\otimes\Z[1/2]$. 

Recall that $\sqc{a}\in \grsc{k}$ acts as multiplication by $(-1)^{v_{\mathfrak{p}_i}(a)}$ on $\pb{\bar{k}_{{p_i}}}$. 
Choose $a\in \mathcal{O}$ with the property that $v_{{p}_i}(a)$ is odd for $1\leq i\leq t$. 
Let $b=1/a$. 
Then $v_{{p}_i}(b)=v_{{p}_i}(1-b)$ is odd for all $i$. It follows that 
$\pf{b}\pf{1-b}x_i=(-2)^2x_i=4x_i$ for all $i$ and hence that 
\[
x=\pf{b}\pf{1-b}\left(\frac{x}{4}\right)\in \mathcal{J}_k\mathcal{P}(\mathcal{O}).
\]
\end{proof}

\begin{remark} With a little more care, one can show that the image of the map
\[
\mathcal{J}_kH_3(SL_2(k),\Z)\to \bigoplus_{p\in P}\redpb{\bar{k}_{{p}}}
\]
contains $\bigoplus_{p\in P}16\cdot\redpb{\bar{k}_{{p}}}$.
\end{remark}

\section{On the failure of weak homotopy invariance}
\label{sec:hoinv}

This section sums up our insights into the failure of weak
homotopy invariance for the third homology of $SL_2$.

\subsection{Module structures}
We show how the failure of weak homotopy invariance derives from the
fact that the square class module structure on $\Ao$-invariant group
homology descends to a Grothendieck-Witt module structure, by the results of
\prettyref{sec:mod}, while the one on group homology typically does
not, by the results of  \prettyref{sec:bloch}.

We begin by noting that the kernel of the change-of-topology morphism is nontrivial for a quite 
general class of fields.
\begin{theorem}
\label{thm:main2a}
Let $k$ be a field of characteristic $\neq 2$, with surjective valuation
$v:k^\times\to \Gamma$ and residue field $\bar{k}$.  Suppose that 
\begin{enumerate}
\item $\Gamma/2\Gamma\not = 0$ and 
\item $16\cdot\redpb{\bar{k}}\not=0$.
\end{enumerate}
Then the kernel of 
the natural
change-of-topology  morphism 
$$
H_3(SL_2(k),\mathbb{Z})\rightarrow
H_3(BSL_2(k[\Delta^\bullet]),\mathbb{Z})
$$ 
is not trivial.

Furthermore, if $\ell$ is an odd prime and if $\pb{\bar{k}}\otimes \Z/\ell\not= 0$, then the  
same statement holds with $\Z$ replaced by $\Z/\ell$. 
\end{theorem}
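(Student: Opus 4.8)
The plan is to exploit the factorization of the change-of-topology morphism through the Grothendieck-Witt tensor product, established in \prettyref{cor:factor}, and then to use the refined Bloch group machinery of \prettyref{sec:bloch} to show that this factorization genuinely loses information. First I would recall that by \prettyref{cor:factor} the change-of-topology morphism $H_3(SL_2(k),\Z)\to H_3(BSL_2(k[\Delta^\bullet]),\Z)$ factors through $H_3(SL_2(k),\Z)\otimes_{\grsc{k}}GW(k)$. Hence the kernel of the change-of-topology morphism contains the kernel of the natural surjection $H_3(SL_2(k),\Z)\to H_3(SL_2(k),\Z)\otimes_{\grsc{k}}GW(k)$, which is exactly $\mathcal{J}_kH_3(SL_2(k),\Z)$ in the notation of \prettyref{sec:bloch}. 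So it suffices to prove $\mathcal{J}_kH_3(SL_2(k),\Z)\neq 0$ under hypotheses (1) and (2). But this is precisely the content of \prettyref{cor:val}: the hypotheses $\Gamma/2\Gamma\neq 0$ and $16\cdot\redpb{\bar{k}}\neq 0$ are exactly its assumptions. This gives the first assertion immediately.

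For the finite-coefficient statement, the plan is to run the same argument with $\Z$ replaced by $\Z/\ell$, using the version of \prettyref{cor:factor} with $\Z/\ell$-coefficients (the module-structure discussion in \prettyref{sec:mod} applies verbatim to homology with any trivial coefficients, since the $GW(k)$-action on $\pi_3$ and the Hurewicz map are defined integrally and base-change along $\Z\to\Z/\ell$ is compatible with all the maps). Thus the kernel of the mod-$\ell$ change-of-topology morphism contains $\mathcal{J}_kH_3(SL_2(k),\Z/\ell)$, and I need to show this is nonzero when $\ell$ is odd and $\pb{\bar{k}}\otimes\Z/\ell\neq 0$. Here I would invoke \prettyref{prop:sv}: for $\phi\neq 0$ the composite $H_3(SL_2(k),\Z)\to\rbl{k}\to\redpb{\bar{k}}[\phi]$ has image containing $4\cdot\redpb{\bar{k}}$, and since $\ell$ is odd, $4$ is invertible mod $\ell$, so the induced map $H_3(SL_2(k),\Z/\ell)\to\redpb{\bar{k}}[\phi]\otimes\Z/\ell$ is surjective onto $\redpb{\bar{k}}\otimes\Z/\ell$. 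Since $\pb{\bar{k}}\otimes\Z/\ell\neq 0$ and (for $\ell$ odd) $\pb{\bar{k}}\otimes\Z/\ell\cong\redpb{\bar{k}}\otimes\Z/\ell$ via the map $\pb{\bar{k}}\to\redpb{\bar{k}}$ (whose kernel $\mathcal{S}_{\bar{k}}$ is $2$-torsion), the target is nonzero.

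It then remains to check, exactly as in the proof of \prettyref{cor:val}, that this nonzero class lies in $\mathcal{J}_k$: choosing a uniformizer-type element $\pi$ with $\phi(v(\pi))=1$, the Steinberg element $\pf{\pi^{-1}}\pf{1-\pi^{-1}}\in\mathcal{J}_k$ acts on the $\phi$-twisted target $\redpb{\bar{k}}[\phi]\otimes\Z/\ell$ as multiplication by $(-2)(-2)=4$, which is a unit mod $\ell$; so a preimage $x\in H_3(SL_2(k),\Z/\ell)$ of a nonzero element can be moved by this Steinberg element into $\mathcal{J}_kH_3(SL_2(k),\Z/\ell)$ while staying nonzero, because its image under $S_\phi$ (which is $\grsc{k}$-equivariant) remains nonzero. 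The main obstacle I anticipate is purely bookkeeping: making sure the $GW(k)$-module structure and the factorization \prettyref{cor:factor} are valid with $\Z/\ell$-coefficients (rather than re-deriving \prettyref{prop:gw} mod $\ell$), and confirming that the torsion estimates ``annihilated by $4$'' appearing in \prettyref{prop:sv} and \prettyref{lem:rrbl} do not obstruct surjectivity once $\ell$ is odd — both of which are routine once one tracks the coefficients carefully.
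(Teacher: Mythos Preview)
Your argument for the first statement is identical to the paper's: it is precisely the combination of \prettyref{cor:factor} and \prettyref{cor:val}.

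For the mod-$\ell$ statement your approach is correct in outline but the paper's is cleaner, and the difference is exactly at the point you flag as ``routine bookkeeping''. You want to establish a mod-$\ell$ version of \prettyref{cor:factor}, i.e.\ that the $\grsc{k}$-action on $H_3(BSL_2(k[\Delta^\bullet]),\Z/\ell)$ factors through $GW(k)$. Your justification --- that the $GW(k)$-action on $\pi_3$ and the Hurewicz map are integral and one then base-changes along $\Z\to\Z/\ell$ --- only yields the $GW(k)$-structure on $H_3(\cdot,\Z)\otimes\Z/\ell$, not on $H_3(\cdot,\Z/\ell)$; the Hurewicz map lands in integral $H_3$, and $H_3(\cdot,\Z)\otimes\Z/\ell\to H_3(\cdot,\Z/\ell)$ is only injective, not surjective. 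One can close this gap (e.g.\ by also checking that the $k^\times$-action on $H_2\cong K_2^{MW}(k)$ factors through $GW(k)$ and then using the universal coefficient sequence or the low-degree Postnikov tower), but it is extra work.

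The paper sidesteps this entirely. It uses the universal coefficient theorem to embed $H_3(SL_2(k),\Z)\otimes\Z/\ell$ into $H_3(SL_2(k),\Z/\ell)$ (and likewise on the $\Ao$-invariant side), so that the kernel of the map
\[
\phi:H_3(SL_2(k),\Z)\otimes\Z/\ell\longrightarrow H_3(BSL_2(k[\Delta^\bullet]),\Z)\otimes\Z/\ell
\]
injects into the kernel of the mod-$\ell$ change-of-topology map. Since $\phi$ is obtained from the integral change-of-topology map by tensoring with $\Z/\ell$, the \emph{integral} \prettyref{cor:factor} applies directly and shows $\mathcal{J}_k\bigl(H_3(SL_2(k),\Z)\otimes\Z/\ell\bigr)\subset\ker\phi$. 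The remaining step --- that this submodule surjects onto $\pb{\bar{k}}\otimes\Z/\ell\cong\redpb{\bar{k}}\otimes\Z/\ell\neq 0$ via $S_\phi$, because the Steinberg element $\pf{\pi^{-1}}\pf{1-\pi^{-1}}$ acts as multiplication by $4$, a unit mod $\ell$ --- is exactly what you wrote. So once your bookkeeping is filled in, your argument converges to the paper's; the paper's route simply avoids reproving \prettyref{prop:gw} with finite coefficients.
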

\begin{proof}
The first statement is a direct consequence of Corollaries \ref{cor:factor} and \ref{cor:val} above. 

For the second statement, the universal coefficient theorem together
with the snake lemma
implies a commutative diagram with exact rows
\begin{center}
\begin{minipage}[c]{10cm}
\xymatrix{
 0 \ar[r] & \ker\phi \ar[r] \ar[d] & \ker\psi \ar[d] \\
0 \ar[r] & H_3(SL_2(k),\mathbb{Z})\otimes \Z/\ell \ar[r] \ar[d]^\phi &
H_3(SL_2(k),\mathbb{Z}/\ell) \ar[d]^\psi \\
0 \ar[r] & H_3(BSL_2(k[\Delta^\bullet]),\mathbb{Z})\otimes\Z/\ell
\ar[r] & H_3(BSL_2(k[\Delta^\bullet]),\mathbb{Z}/\ell)
}
\end{minipage}
\end{center}
But the group $\ker \phi$ contains
$\mathcal{J}_k(H_3(SL_2(k),\Z)\otimes\Z/\ell)$, since  
$GW(k)\otimes \Z/\ell= (\grsc{k}\otimes\Z/\ell)/\mathrm{Image}(\mathcal{J}_k)$. 
This, in turn,  maps onto the nonzero group 
$\pb{\bar{k}}\otimes\Z/\ell=\redpb{\bar{k}}\otimes\Z/\ell$, thus
proving the second claim.
\end{proof}

Next, we observe (\prettyref{thm:main2b} below) 
that when the field $k$ is small, the kernel of the change-of-topology morphism is always large.

\begin{lemma}\label{lem:cheb}
Let $k$ be a global field. Let $\ell$ be an odd prime such that
$[k(\zeta_\ell):k]$ is even, where $\zeta_\ell$ denotes a  primitive
$\ell$-th root of unity. Then there are infinitely many finite places
$v$ of $k$ satisying $\ell|q_v+1$, where  $q_v$ is the cardinality of
the residue field, $\bar{k}_v$, at $v$.  
\end{lemma}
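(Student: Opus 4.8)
The plan is to deduce this from the Chebotarev density theorem applied to the cyclotomic extension $k(\zeta_\ell)/k$. First I would record the standard dictionary: since $\ell$ differs from the characteristic (if $\operatorname{char}(k)=\ell$ then $k(\zeta_\ell)=k$ has odd degree $1$ over $k$, contradicting the hypothesis), all but finitely many finite places $v$ of $k$ are unramified in $k(\zeta_\ell)$, and the cyclotomic character identifies $G:=\operatorname{Gal}(k(\zeta_\ell)/k)$ with a subgroup of $(\Z/\ell)^\times$ in such a way that $\operatorname{Frob}_v\in G$ corresponds to the class of $q_v$ modulo $\ell$. Concretely, for a place $w$ of $k(\zeta_\ell)$ above such a $v$, the residue extension $\bar{k}_w/\bar{k}_v$ is generated by the reduction of $\zeta_\ell$, which is still a primitive $\ell$-th root of unity (as $v\nmid\ell$), and $\operatorname{Frob}_v$ acts on it by $x\mapsto x^{q_v}$.

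Next I would observe that, $\ell$ being odd, the group $(\Z/\ell)^\times$ is cyclic of even order $\ell-1$, hence contains exactly one element of order $2$, namely $-1$. By hypothesis $|G|=[k(\zeta_\ell):k]$ is even, so $G$ contains an element of order $2$; being a subgroup of $(\Z/\ell)^\times$, this element must be $-1\in(\Z/\ell)^\times$. Thus $-1\in G$.

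Finally I would invoke the Chebotarev density theorem. Since $G$ is abelian, conjugacy classes in $G$ are singletons, so the set of finite places $v$ of $k$ that are unramified in $k(\zeta_\ell)$ and satisfy $\operatorname{Frob}_v=-1$ has density $1/|G|>0$; in particular it is infinite. For every such $v$, the identification above gives $q_v\equiv -1\pmod{\ell}$, i.e.\ $\ell\mid q_v+1$, which is the claim. There is essentially no serious obstacle: the only point deserving care is the identification of the unique order-two element of $(\Z/\ell)^\times$ with $-1$, which is precisely what turns the evenness hypothesis on $[k(\zeta_\ell):k]$ into the congruence $q_v\equiv-1$ rather than some other condition.
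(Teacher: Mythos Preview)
Your proof is correct and follows essentially the same approach as the paper: both invoke Chebotarev for the extension $k(\zeta_\ell)/k$ to produce infinitely many places whose Frobenius has order $2$ in the Galois group, and then translate this into the congruence $q_v\equiv -1\pmod{\ell}$. The only cosmetic difference is that you phrase the translation via the cyclotomic character (identifying $\operatorname{Frob}_v$ with $q_v\bmod\ell$ and the order-$2$ element with $-1$), whereas the paper phrases it via residue fields (order $2$ means $\zeta_\ell\notin\mathbb{F}_{q_v}$ but $\zeta_\ell\in\mathbb{F}_{q_v^2}$, so $\ell\nmid q_v-1$ while $\ell\mid q_v^2-1$).
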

\begin{proof}
Let $L=k(\zeta_\ell)$. By the Chebotarev density theorem 
(see \cite[Chapter VII, Theorem 13.4]{neukirch:book} for number fields and 
\cite[Theorem 9.13A]{rosen:book} for 
global function fields) there are
infinitely many primes of $k$ (not dividing $\ell$) whose  
Frobenius in $\mathrm{Gal}(L/k)$ has order $2$.  It follows that, for such a prime $v$, 
$\zeta_\ell\not\in \bar{k}_v=\F{q_v}$, but $\zeta_\ell\in \F{q_v^2}$. Thus, for such $v$, 
 $\ell\nmid q_v-1$, but $\ell|q_v^2-1$. 
\end{proof}
\begin{remark} Of course, for any given number field $k$, $[k(\zeta_\ell):k]=\ell-1$ for all but finitely many 
odd primes $\ell$. 

If $k$ is a global field of positive characteristic, there are
infinitely many odd primes $\ell$ such that $[k(\zeta_\ell):k]$ is even.  
\end{remark}
\begin{theorem}
\label{thm:main2b}
Let $k$ be an infinite perfect field which is finitely generated over its
prime field, and assume that $\operatorname{char}k\neq 2$. Then the
kernel of the natural change-of-topology  morphism 
$$
H_3(SL_2(k),\mathbb{Z}[1/2])\rightarrow
H_3(BSL_2(k[\Delta^\bullet]),\mathbb{Z}[1/2])
$$ 
is not finitely-generated.

Furthermore, if $\ell$ is an odd prime for which $[k(\zeta_\ell):k]$ is even, then the  
same statement holds with $\Z[1/2]$ replaced by $\Z/\ell$. 
\end{theorem}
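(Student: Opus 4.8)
The plan is to combine the local structural results of \prettyref{sec:bloch} with the module-theoretic factorization of \prettyref{cor:factor}. Since $k$ is finitely generated over its prime field and infinite, it admits many discrete valuations with finite (or at least finitely generated) residue fields. The key point is that the kernel of the change-of-topology morphism contains $\mathcal{J}_kH_3(SL_2(k),\mathbb{Z}[1/2])$, because by \prettyref{cor:factor} the change-of-topology morphism factors through $H_3(SL_2(k),\mathbb{Z})\otimes_{\grsc{k}}GW(k)$, and $\mathcal{J}_k$ is precisely the kernel of $\grsc{k}\to GW(k)$, so $\mathcal{J}_kH_3(SL_2(k),\mathbb{Z}[1/2])=\ker\bigl(H_3(SL_2(k),\mathbb{Z}[1/2])\to H_3(SL_2(k),\mathbb{Z}[1/2])\otimes_{\grsc{k}}GW(k)\bigr)$. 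So it suffices to show that $\mathcal{J}_kH_3(SL_2(k),\mathbb{Z}[1/2])$ is not finitely generated.

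First I would realize $k$ as the fraction field of a suitable finitely generated domain; since $k$ is finitely generated over its prime field, one can arrange a subring $\mathcal{O}$ which is a unique factorization domain (for instance a localization of a polynomial ring, or the ring of integers of a global field after inverting a suitable element) with infinitely many pairwise non-associate primes $p$, each having residue field $\bar{k}_p$ which is again finitely generated, hence with $\pb{\bar{k}_p}$ a nonzero finite group whenever the residue field is finite — and one can always arrange infinitely many primes with finite residue field of odd characteristic. Then \prettyref{cor:ufd} applies: the specialization map gives a surjection $\mathcal{J}_kH_3(SL_2(k),\mathbb{Z}[1/2])\twoheadrightarrow\bigoplus_{p\in P}\pb{\bar{k}_p}\otimes\mathbb{Z}[1/2]$. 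The target is an infinite direct sum of nonzero finite groups (as long as infinitely many of the $\pb{\bar{k}_p}\otimes\mathbb{Z}[1/2]$ are nonzero, which one arranges by choosing primes whose residue field has order $q$ with $q+1$ not a power of $2$ — e.g. by Dirichlet/Chebotarev there are infinitely many such), hence not finitely generated, and therefore neither is $\mathcal{J}_kH_3(SL_2(k),\mathbb{Z}[1/2])$, proving the first statement.

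For the second statement, with $\mathbb{Z}[1/2]$ replaced by $\mathbb{Z}/\ell$ for an odd prime $\ell$ with $[k(\zeta_\ell):k]$ even, the argument is parallel but uses \prettyref{lem:cheb} and the more flexible form of \prettyref{cor:ufd}. Since $k$ is a global field (when the prime field is $\mathbb{Q}$ or $\mathbb{F}_p$ and $k$ has transcendence degree zero, this is the number field / global function field case) one invokes \prettyref{lem:cheb} to get infinitely many finite places $v$ with $\ell\mid q_v+1$, hence $\pb{\bar{k}_v}\otimes\mathbb{Z}/\ell\neq 0$ by \prettyref{rem:pbfin}. In the higher transcendence degree case one produces infinitely many primes of a UFD $\mathcal{O}$ whose residue fields are themselves global fields for which $[\bar{k}_p(\zeta_\ell):\bar{k}_p]$ is even, again by a Chebotarev argument, so that $\pb{\bar{k}_p}\otimes\mathbb{Z}/\ell\neq 0$. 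Then the second part of \prettyref{cor:ufd} (with $A=\mathbb{Z}/\ell$, a $\mathbb{Z}[1/2]$-algebra since $\ell$ is odd) gives a surjection from $\mathcal{J}_k(H_3(SL_2(k),\mathbb{Z})\otimes\mathbb{Z}/\ell)$ onto an infinite direct sum of nonzero $\mathbb{Z}/\ell$-modules, and the universal-coefficient / snake-lemma diagram of the proof of \prettyref{thm:main2a} identifies this as sitting inside the kernel of the change-of-topology morphism with $\mathbb{Z}/\ell$-coefficients.

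The main obstacle I anticipate is the purely arithmetic bookkeeping in the higher-transcendence-degree case: verifying that a finitely generated field really is the fraction field of a UFD with infinitely many primes whose residue fields are finite (resp. satisfy the parity condition on $\zeta_\ell$). For the core cases — number fields and global function fields — this is classical (Chebotarev, as in \prettyref{lem:cheb}), and for general finitely generated fields one reduces to these by choosing a transcendence basis and specializing, but making the specialization argument airtight so that infinitely many residue fields are finite of the required type requires some care. Everything else — the factorization through $GW(k)$, the identification of $\mathcal{J}_k$ as the relevant kernel, and the surjectivity of the specialization maps — is already packaged in \prettyref{cor:factor}, \prettyref{cor:ufd}, and \prettyref{prop:ufd}.
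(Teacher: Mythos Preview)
Your overall strategy matches the paper's: reduce via \prettyref{cor:factor} to showing that $\mathcal{J}_kH_3(SL_2(k),A)$ is not finitely generated, then use the UFD surjection of \prettyref{cor:ufd} onto $\bigoplus_p \pb{\bar{k}_p}\otimes A$. For global fields your argument is essentially the paper's.

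The gap is in the higher-transcendence-degree case, and it is not mere bookkeeping. If $k$ has transcendence degree $d>0$ over its prime field and $\mathcal{O}$ is a UFD with fraction field $k$, then the residue fields $\bar{k}_p$ at height-one primes have transcendence degree $d-1$; they are \emph{never} finite. So the sentence ``one can always arrange infinitely many primes with finite residue field'' is simply false, and with it the claim that each $\pb{\bar{k}_p}\otimes\Z[1/2]$ is a nonzero finite group. Likewise, in your $\Z/\ell$ argument you assert $\pb{\bar{k}_p}\otimes\Z/\ell\neq 0$ for a global residue field $\bar{k}_p$ without justification, and for $d\geq 2$ the residue fields are not even global.

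The paper fixes this by running an induction on the transcendence degree that proves two statements simultaneously: the non-finite-generation of the change-of-topology kernel \emph{and} the non-finite-generation of $\pb{k}\otimes A$. The second statement is established in the global base case via the exact sequence $\pb{k}\to S_2(k)\to K_2(k)\to 0$ (for a global field $K_2(k)$ is torsion while $S_2(k)$ has infinite free rank, forcing $\pb{k}$ to surject onto a free abelian group of infinite rank). In the induction step, a single discrete valuation on $k$ already has residue field $\bar{k}$ of smaller transcendence degree, and the kernel surjects onto $\pb{\bar{k}}\otimes A$, which is not finitely generated by the inductive hypothesis. This auxiliary statement about $\pb{k}$ is the missing ingredient in your argument.
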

\begin{proof} Let $A=\Z[1/2]$ or
$A=\Z/\ell$ for an odd prime $\ell$.

Now the field must contain a subfield, $k_0$ say, isomorphic either to $\mathbb{Q}$ or to $\mathbb{F}_p(x)$ where 
$p=\mathrm{char}(k)>0$.  Let $d$ denote the transcendence degree of $k$ over $k_0$. 
We will prove the result, together with the statement that $\pb{k}\otimes A$ is not finitely generated, by induction on 
$d$.

Suppose first that $d=0$. Then $k$ is a global field. In this case
(fixing an infinite prime in the function field case), the ring of
integers $\mathcal{O}_k$ is a Dedekind domain with finite class group.
Hence there exists $a\in \mathcal{O}_k$ for which
$\mathcal{O}:=\mathcal{O}_k[a^{-1}]$ is a  unique factorization
domain. Now for any prime $p$ of $\mathcal{O}$, $\pb{\bar{k}_p}\otimes
\Z[1/2]$ is cyclic of order $(q_p+1)'$ where $n'$ denotes the
prime-to-$2$ part of the number $n$ (Remark \ref{rem:pbfin}). This,  
together with Lemma \ref{lem:cheb} implies that $\pb{\bar{k}_p}\otimes
A$ is nonzero for infinitely many primes $p$. The claim then follows
from \prettyref{cor:factor} and \prettyref{cor:ufd}.

We also observe that for any field $k$, there is an exact sequence 
\[
\pb{k}\to S_2(k)\to K_2(k)\to 0
\]
where the first map sends $[a]$ to $a\circ (1-a)$ and the second sends $a\circ b$ to 
$\{ a,b\}$.
However when $k$ is a global field $K_2(k)$ is a torsion group while $S_2(k)$ modulo torsion is a 
free abelian group of infinite rank. It follows that 
$\pb{k}$ maps onto a free abelian group of infinite rank, 
and hence $\pb{k}\otimes A$ is not finitely generated. 
 
Now suppose $d>0$ and the result is known for fields of smaller
transcendence degree over $k_0$. Then for any discrete valuation $p$ on $k$, $\bar{k}_p$ is
an infinite finitely-generated field of smaller transcendence
degree. By the proof of \prettyref{cor:val} the kernel of the change-of-topology morphism surjects onto 
 $\pb{\bar{k}_p}\otimes A$. By induction, $\pb{\bar{k}_p}\otimes A$ is already not
finitely generated. Also by  Corollary \ref{cor:pb} we have the
surjection $\pb{k}\otimes A\to\pb{\bar{k}_p}\otimes A$  and the result 
follows.   
 \end{proof}

\begin{remark}
The induction step could also be proved by noting that the conditions
on the field $k$ in the theorem imply that there is a subring
$\mathcal{O}$ of $k$ with the following properties:   
$\mathcal{O}$ is a unique factorization domain with field of fractions
$k$ and for which the set $P$ of association classes of prime elements
is infinite.  
\end{remark}

Finally, we note that in the case of local fields, we can describe the exact structure of the change-of-topology 
kernel over $\Z[1/2]$.

\begin{theorem}
\label{thm:main2c}
Let $k$ be a field satisfying the conditions of  \prettyref{prop:hloc}. Then the
change-of-topology morphism factors through 
$K_3^{\operatorname{ind}}(k)\otimes\mathbb{Z}[1/2]$. Its kernel is isomorphic to 
$$
\bigoplus_{i=0}^{n-1}(\pb{k_i}\otimes\Z[1/2])^{\oplus 2^{n-i-1}}.
$$

In particular, if $k$ is complete with respect to a discrete valuation with residue field $k_0$ which is either 
finite of odd order or real-closed or quadratically closed  
then this kernel is isomorphic to $\pb{k_0}\otimes \Z[1/2]$. 
\end{theorem}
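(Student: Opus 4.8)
The plan is to deduce everything from three results already at our disposal: the factorisation of the change-of-topology morphism through the Grothendieck--Witt coinvariants (\prettyref{cor:factor}), the computation of these coinvariants together with the $\mathcal{J}_k$-torsion of $H_3(SL_2(k),\Z[1/2])$ (\prettyref{cor:hloc}), and the natural retraction onto $\indk{k}$ available on the $\Ao$-invariant side (\prettyref{lem:bsltoindk}).

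First I would apply \prettyref{cor:factor}, tensored with the flat $\Z$-module $\Z[1/2]$: the change-of-topology morphism $c\colon H_3(SL_2(k),\Z[1/2])\to H_3(BSL_2(k[\Delta^\bullet]),\Z[1/2])$ then factors as $c=\beta\circ\alpha$, where $\alpha$ is the canonical surjection onto $H_3(SL_2(k),\Z[1/2])\otimes_{\grsc{k}}GW(k)$, with kernel $\mathcal{J}_kH_3(SL_2(k),\Z[1/2])$, and $\beta$ is the induced map. By \prettyref{cor:hloc}(1) the middle term is isomorphic to $\indk{k}\otimes\Z[1/2]$, and since that corollary also gives $\mathcal{I}_kH_3(SL_2(k),\Z[1/2])=\mathcal{J}_kH_3(SL_2(k),\Z[1/2])$, the map $\alpha$ is identified, under this isomorphism, with the surjection $\gamma_{\Z[1/2]}$ of \prettyref{lem:alpha} (compose the iso $H_3(SL_2(k),\Z[1/2])\otimes_{\grsc{k}}GW(k)\to H_3(SL_2(k),\Z[1/2])\otimes_{\grsc{k}}\Z$ with the iso of \prettyref{lem:alpha}(2)). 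In particular $c$ factors through $\indk{k}\otimes\Z[1/2]$, which is the first claim.

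It remains to pin down $\ker c$. Since $\alpha$ is surjective, $\ker c=\alpha^{-1}(\ker\beta)$ equals $\ker\alpha=\mathcal{J}_kH_3(SL_2(k),\Z[1/2])$ as soon as $\beta$ is injective, and then \prettyref{cor:hloc}(3) identifies this kernel with $\bigoplus_{i=0}^{n-1}(\pb{k_i}\otimes\Z[1/2])^{\oplus 2^{n-i-1}}$. To prove $\beta$ injective I would invoke \prettyref{lem:bsltoindk}, which supplies a natural surjection $\gamma'\colon H_3(BSL_2(k[\Delta^\bullet]),\mathbb{Z})\to\indk{k}$ with $\gamma'\circ c=\gamma$; tensoring with $\Z[1/2]$ and substituting $c=\beta\circ\alpha$ gives $\gamma'_{\Z[1/2]}\circ\beta\circ\alpha=\gamma_{\Z[1/2]}$, which we just identified with an isomorphism composed with $\alpha$. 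As $\alpha$ is onto, $\gamma'_{\Z[1/2]}\circ\beta$ is an isomorphism, so $\beta$ is a split injection, as desired. The ``in particular'' statement is then the case $n=1$ of \prettyref{prop:hloc}, with $k_1=k$ and $k_0$ the residue field, for which the direct sum collapses to $(\pb{k_0}\otimes\Z[1/2])^{\oplus 2^{0}}=\pb{k_0}\otimes\Z[1/2]$.

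The body of the argument is a formal diagram chase; the one external input that carries real weight is \prettyref{lem:bsltoindk}, which rests on the stabilisation isomorphism \prettyref{cor:stabh3} and hence on $\operatorname{char}k=0$. I therefore expect the main (and essentially only) obstacle to be the characteristic hypothesis: for positive-characteristic complete discretely valued fields such as $\F{}((t))$ one would first need a substitute for the stabilisation step feeding into the retraction $\gamma'$, after which the rest of the chase goes through verbatim.
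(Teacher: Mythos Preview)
Your proposal is correct and follows essentially the same route as the paper: both arguments combine \prettyref{cor:factor} (giving $\mathcal{J}_kH_3\subseteq\ker c$), \prettyref{cor:hloc} (identifying $\mathcal{J}_kH_3$ with $\ker\gamma$ and with the displayed direct sum), and \prettyref{lem:bsltoindk} (giving $\ker c\subseteq\ker\gamma$) to conclude $\ker c=\mathcal{J}_kH_3$. The paper phrases the last step as a direct double inclusion of kernels, whereas you reformulate it as injectivity of the induced map $\beta$; this is only a packaging difference, though your version has the small bonus of exhibiting $\beta$ as a split injection. Your closing remark on the characteristic-zero hypothesis hidden in \prettyref{lem:bsltoindk} (via \prettyref{cor:stabh3}) is a pertinent observation that the paper leaves implicit.
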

\begin{proof}
Recall that $\mathcal{J}_kH_3(SL_2(k),\Z[1/2])$ is contained in the change-of-topology morphism by Corollary 
\ref{cor:factor}. 
But 
\[
\mathcal{J}_kH_3(SL_2(k),\Z[1/2])=\ker\left(H_3(SL_2(k),\Z[1/2])\to\indk{k}\otimes\Z[1/2]\right) 
\]
by Corollary \ref{cor:hloc}. Since the map to $\indk{k}$ factors through the change-of-topology morphism by Lemma
\ref{lem:bsltoindk}, it follows that $\mathcal{J}_kH_3(SL_2(k),\Z[1/2])$ is equal to the change-of-topology kernel. 

Finally, 
\[
\mathcal{J}_kH_3(SL_2(k),\Z[1/2])  \cong \bigoplus_{i=0}^{n-1}(\pb{k_i}\otimes\Z[1/2])^{\oplus 2^{n-i-1}}
\]
by Corollary \ref{cor:hloc} (3).
\end{proof}

\subsection{Number fields: finite generation}
In the case when $k$ is a number field, we can deduce the failure of weak homotopy invariance in another 
way, which is of independent interest: It follows from
simple size considerations - the group $H_3(SL_2(k),\mathbb{Z})$ is not finitely-generated
while $H_3(BSL_2(k[\Delta^\bullet]),\mathbb{Z})$ is. This last fact is  a consequence of
finite-generation results in symplectic K-theory: 

\begin{proposition}
\label{prop:h3fin}
\begin{enumerate}[(i)]
\item Let $k$ be a non-archimedean local field of characteristic $\neq
  2$, and let $\ell$ be an odd prime different from the
  characteristic. Then the group
  $H_3(BSL_2(k[\Delta^\bullet]),\mathbb{Z}/\ell)$ is finite. 
\item 
Let $k$ be a number field. Then the homology group
$H_3(BSL_2(k[\Delta^\bullet]),\mathbb{Z}[1/2])$ is a finitely
generated $\mathbb{Z}[1/2]$-module. 
\end{enumerate}
\end{proposition}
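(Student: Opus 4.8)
The plan is to reduce both statements to finite-generation (respectively finiteness) results for the symplectic $K$-theory groups $KSp_3$ via the stabilization results of \prettyref{sec:stabil}. First I would use \prettyref{cor:stabh3}(1) to identify $H_3(BSL_2(k[\Delta^\bullet]),\mathbb{Z})$ with $H_3(BSL_\infty(k[\Delta^\bullet]),\mathbb{Z})$: indeed, the stabilization sequence of \prettyref{prop:stabsl} together with the surjectivity statement of \prettyref{cor:stabh3}(1) shows $H_3(BSL_2(k[\Delta^\bullet]),\mathbb{Z})\cong H_3(BSL_3(k[\Delta^\bullet]),\mathbb{Z})\cong H_3(BSL_4(k[\Delta^\bullet]),\mathbb{Z})\cong\cdots$, so the group stabilizes. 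By homotopy invariance of algebraic $K$-theory for the regular ring $k$, the homology $H_3(BSL_\infty(k[\Delta^\bullet]),\mathbb{Z})$ agrees with $H_3(SL_\infty(k),\mathbb{Z})$, which by Suslin's theorem (quoted at the start of \prettyref{sec:indk3}) is $K_3(k)/\{-1\}\cdot K_2(k)$. Alternatively, and this is the route I would actually take since it also handles the symplectic side, I would use \prettyref{cor:stabh3}(2) to identify the group with $H_3(BSp_\infty(k[\Delta^\bullet]),\mathbb{Z})$, which by Karoubi's homotopy invariance for the infinite symplectic group is $H_3(Sp_\infty(k),\mathbb{Z})$, and this receives a surjection from $KSp_3(k)$ (with kernel controlled by Milnor-type $K$-theory in the relevant degree).

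For part (i), after reducing mod $\ell$ and passing to the stable group, it suffices to show $H_3(Sp_\infty(k),\mathbb{Z}/\ell)$, equivalently $K_3(k)/(\{-1\}K_2(k))$ with $\mathbb{Z}/\ell$-coefficients, is finite for $k$ a non-archimedean local field and $\ell$ an odd prime distinct from the residue characteristic. This follows from the known structure of the $K$-theory of local fields: $K_3(k)$ is the direct sum of a finite group and a uniquely divisible group (this is classical; the finite part is governed by the roots of unity and the étale cohomology of $k$, which is finite), and the divisible part dies mod $\ell$. One must also check the contribution of $\{-1\}\cdot K_2(k)$ and of $K_3^M(k)$ doesn't spoil finiteness mod $\ell$, but $K_2(k)$ of a local field is the direct sum of a finite cyclic group (the group of roots of unity) and a uniquely divisible group, and $K_3^M(k)$ is torsion of bounded exponent, so these pose no difficulty.

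For part (ii), reducing to $H_3(Sp_\infty(\mathcal{O}_k[1/2]),\mathbb{Z}[1/2])$ I would invoke Borel-type finiteness: the symplectic $K$-groups $KSp_n(\mathcal{O}_k)$ of the ring of integers of a number field are finitely generated (this is a consequence of Borel's work on arithmetic groups, or of Quillen-type finite-generation theorems for $K$-theory of rings of integers, combined with the fact that $BSp_\infty$ is an infinite loop space whose homotopy groups are these finitely generated $KSp$-groups). Since $H_3$ of an $H$-space with finitely generated homotopy groups is finitely generated, and we have inverted $2$ throughout, the claim follows. The step I expect to be the main obstacle is making the identification with stable symplectic $K$-theory completely rigorous — in particular pinning down that the homotopy-invariance isomorphism and the stabilization isomorphisms are compatible over $\mathbb{Z}[1/2]$, and correctly citing the finite-generation input for $KSp_\ast$ of rings of $S$-integers in the exact form needed (one may need to pass from $\mathcal{O}_k$ to $\mathcal{O}_k[1/2]$, which only changes things by finitely generated localization-sequence terms). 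Once the translation to $KSp_3$ is in place, the finiteness and finite-generation statements are standard consequences of the arithmetic of $K$-theory.
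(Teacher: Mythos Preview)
Your overall strategy matches the paper's: reduce via \prettyref{cor:stabh3}(2) (using $SL_2=Sp_2$) to $H_3(BSp_\infty(k[\Delta^\bullet]),\mathbb{Z})$, then use the Hurewicz surjection from $KSp_3(k)$ (since $BSp_\infty(k[\Delta^\bullet])$ is simply connected), and finally invoke finite-generation results for $KSp_3$. The paper does exactly this and proves the required $KSp_3$ statements separately (\prettyref{prop:kspfin} and \prettyref{prop:ksploc}).

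There are, however, two genuine confusions in your write-up. First, your $SL$-route is broken: \prettyref{cor:stabh3}(1) only asserts surjectivity of $H_{2n}(BSL_{2n}(k[\Delta^\bullet]))\to K_{2n}^{MW}(k)$, so it yields $H_3(BSL_3(k[\Delta^\bullet]))\cong H_3(BSL_4(k[\Delta^\bullet]))$ but says nothing about $SL_2\hookrightarrow SL_3$; that step involves $K_3^{MW}(k)$ in the stabilization sequence and no surjectivity is available. Second, and more seriously, in part~(i) you write that $H_3(Sp_\infty(k),\mathbb{Z}/\ell)$ is ``equivalently $K_3(k)/(\{-1\}K_2(k))$''. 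Suslin's identification is for $H_3(SL_\infty(k))$, not $H_3(Sp_\infty(k))$; these are different groups, so your subsequent argument about the structure of $K_3(k)$ for local fields is aimed at the wrong target.

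The paper works directly with $KSp_3(k)$. For (ii) it uses Hornbostel's localization sequence for hermitian $K$-theory to pass from $k$ to a ring of $S$-integers $A$ --- the cofiber terms ${}_{-1}U_i(A/\mathfrak{p})$ are $2$-groups and vanish after inverting $2$ --- and then proves finite generation of $KSp_3(A)$ via Karoubi periodicity sequences reducing to finite generation of $K_i(A)$ and of $KO_1(A)$. For (i) the same localization sequence, together with the identification of $KSp_3$ of the complete valuation ring with that of the finite residue field mod~$\ell$, gives finiteness of $KSp_3(k)\otimes\mathbb{Z}/\ell$. Your sketch for (ii) has the right shape, but the passage from $k$ to $\mathcal{O}_{k,S}$ is the substantive step you gloss over: it is not automatic that $KSp_3$ of a number \emph{field} is finitely generated, and the key point is precisely that the localization cofiber is $2$-torsion.
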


\begin{proof}
By \prettyref{cor:stabh3}, it suffices to prove the statements for
$H_3(Sp_{\infty}(k),\mathbb{Z})\cong
H_3(BSp_{\infty}(k[\Delta^\bullet]),\mathbb{Z})$. The simplicial set
$BSp_\infty(k[\Delta^\bullet])$ is simply-connected because
$Sp_\infty$ is $\Ao$-connected. Therefore, the Hurewicz theorem
implies a surjection  
$$
\pi_3^{\Ao}(BSp_\infty)(\Spec k)\cong
\pi_3(BSp_\infty(k[\Delta^\bullet]))\rightarrow
H_3(BSp_\infty(k[\Delta^\bullet]),\mathbb{Z}). 
$$
We are thus
reduced to show finite generation for $\pi_3^{\Ao}(BSp_\infty)(k)\cong
KSp_3(k)$.  
The symplectic K-theory assertion is proved in \prettyref{prop:kspfin}
resp. \prettyref{prop:ksploc} below. 
\end{proof}

\begin{proposition}
\label{prop:kspfin}
Let $k$ be a number field. Then
$KSp_3(k)\otimes_{\mathbb{Z}}\mathbb{Z}[1/2]$ is a finitely generated 
$\mathbb{Z}[1/2]$-module. 
\end{proposition}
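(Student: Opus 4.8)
The plan is to identify $KSp_3(k)$ with a homotopy group of the homotopy fibre of a map between $K$-theory-type spectra and then invoke the finite generation theorems of Borel--Quillen-type for the $K$-theory of number rings. Concretely, symplectic $K$-theory $KSp_*(k)$ is the homotopy of the hermitian $K$-theory spectrum $\mathbf{GW}^{[2]}(k)$ (in Schlichting's notation), and there is a forgetful-hyperbolic fibre sequence relating $\mathbf{GW}^{[2]}(k)$ to the ordinary $K$-theory spectrum $\mathbf{K}(k)$ together with an $L$-theory (Witt-theory) piece. After inverting $2$, the $L$-theoretic contribution becomes accessible and, crucially, finitely generated for number fields. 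So the first step is to write down the localization/Bott sequence of spectra
\[
\mathbf{K}(k) \longrightarrow \mathbf{GW}^{[2]}(k) \longrightarrow \mathbf{L}^{[2]}(k)
\]
(valid after inverting $2$, e.g.\ by Schlichting's or Berrick--Karoubi's results), and to read off the associated long exact sequence on homotopy groups in degrees $2,3,4$.

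Next I would bring in the known finiteness inputs. On the $\mathbf{K}(k)$ side, Quillen's theorem gives that $K_n(\mathcal{O}_k)$ is finitely generated for all $n$, and Soulé/Borel-type comparisons give that $K_n(k)$ differs from $K_n(\mathcal{O}_k)$ only by the localization sequence with finite-field $K$-theory at the finite places, which contributes finite groups in each degree; hence $K_n(k)\otimes\mathbb{Z}[1/2]$ is a finitely generated $\mathbb{Z}[1/2]$-module for every $n$ (more precisely in the single degree $n=3$ we need here, but it costs nothing to say it in general). On the $L$-theory side, the (symmetric or quadratic, with $2$ inverted they agree) $L$-groups $L_n^{[2]}(k)\otimes\mathbb{Z}[1/2]$ are computed in terms of the Witt group $W(k)$ of the number field, which is finitely generated as an abelian group — this is a classical fact (Milnor--Husemoller), the signature map $W(k)\to\mathbb{Z}^{r_1}$ having finite kernel and cokernel. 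Thus every group sitting adjacent to $KSp_3(k)\otimes\mathbb{Z}[1/2]$ in the long exact sequence is a finitely generated $\mathbb{Z}[1/2]$-module.

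The conclusion is then a two-line diagram chase: $KSp_3(k)\otimes\mathbb{Z}[1/2]$ sits in a short exact sequence squeezed between a quotient of $K_3(k)\otimes\mathbb{Z}[1/2]$ (or $L_4^{[2]}(k)\otimes\mathbb{Z}[1/2]$, depending on how one sets up the fibre sequence) and a sub of $L_3^{[2]}(k)\otimes\mathbb{Z}[1/2]$; since $\mathbb{Z}[1/2]$ is Noetherian, a module caught between two finitely generated modules is finitely generated. I would also remark that this is exactly the argument sketched in the acknowledgements' reference to discussions with Hornbostel and Schlichting, and that an entirely parallel statement will be needed for local fields in \prettyref{prop:ksploc}, where one replaces Borel's theorem by the finiteness of $K$-theory of local fields with finite coefficients and the (pro-)finiteness of the relevant Witt groups.

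The main obstacle I anticipate is purely bookkeeping: getting the indexing and the precise form of the fibre sequence right (which flavour of hermitian $K$-theory, $\mathbf{GW}^{[n]}$ versus $\mathbf{GW}^{[n]}\langle \cdot\rangle$, and whether $2$ must be inverted at the spectrum level or only at the homotopy-group level), and correctly matching the geometric object $\pi_3^{\Ao}(BSp_\infty)(k)$ appearing in \prettyref{prop:h3fin} with the algebraic $KSp_3(k)$ via the identification already cited there. None of this is deep, but it is where the proof could go wrong if one is cavalier about $2$-torsion. Everything else reduces to the cited finite-generation theorems and Noetherianity of $\mathbb{Z}[1/2]$.
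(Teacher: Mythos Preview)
Your approach is sound and arrives at the same conclusion, but by a somewhat different route than the paper. The paper first passes from $k$ to a ring of $S$-integers $A=\mathcal{O}_{k,S}$ via Hornbostel's localization sequence for hermitian $K$-theory (the relevant $U$-theory terms at the finite residue fields are $2$-groups, so they vanish after inverting $2$), and then, working over $A$, iterates the Karoubi periodicity sequence $K_i(A)\to GW^j_i(A)\to GW^{j-1}_{i-1}(A)\to K_{i-1}(A)$ twice to reduce $KSp_3(A)=GW^2_3(A)$ to $KO_1(A)=GW^0_1(A)$, which is then handled by Bass's exact sequence together with finite generation of $S$-unit and $S$-class groups. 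You instead stay at the field level and invoke the single cofibre sequence $\mathbf{K}\to\mathbf{GW}^{[2]}\to\mathbf{L}^{[2]}$, trading the iterated Bott sequences and Bass's computation for the identification of $L$-groups with shifts of the Witt group after inverting $2$. Your route is arguably more direct; the paper's has the virtue of staying entirely within hermitian $K$-theory and classical number-theoretic finiteness results.

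Two of your incidental claims are false as stated, though neither damages the degree-$3$ argument. First, it is \emph{not} true that $K_n(k)\otimes\mathbb{Z}[1/2]$ is finitely generated for all $n$: the term $\bigoplus_{\mathfrak{p}}K_{n-1}(\kappa(\mathfrak{p}))$ in the localization sequence is an infinite direct sum of nonzero finite groups when $n-1$ is odd (already $K_2(k)$ is not finitely generated). You are saved at $n=3$ precisely because $K_2$ of a finite field vanishes, so $K_3(\mathcal{O}_k)\twoheadrightarrow K_3(k)$. Second, $W(k)$ is \emph{not} finitely generated as an abelian group for a number field: the kernel of the total signature $W(k)\to\mathbb{Z}^{r_1}$ is the (infinite) torsion subgroup. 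What you actually need, and what is true, is that this torsion is $2$-primary, so $W(k)\otimes\mathbb{Z}[1/2]\cong\mathbb{Z}[1/2]^{r_1}$.
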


\begin{proof}
To determine $KSp_3(k)$, we use the computations of Hornbostel,
cf. \cite{hornbostel}. Note that in loc.cit., the group $KSp_3(k)$ is
denoted by $_{-1}K_3^h(k)$. Let $A=\mathcal{O}_{k,S}$ be a ring of
$S$-integers for a finite set $S$ of places containing all the
infinite places and all the places lying above $2$. 
Using \cite[Corollary 4.15]{hornbostel}, we obtain an
exact sequence
$$
\cdots\rightarrow\bigoplus_{\mathfrak{p}}
{}_{-1}U_3(A/\mathfrak{p})\rightarrow {}_{-1}K_3^h(A)\rightarrow 
{}_{-1}K_3^h(k)\rightarrow \bigoplus_{\mathfrak{p}}
{}_{-1}U_2(A/\mathfrak{p})\rightarrow \cdots
$$
The $U$-theory groups of finite fields are determined in
\cite[Corollary 4.17]{hornbostel}:
$$
{}_{-1}U_2(\mathbb{F}_q)\cong\mathbb{Z}/2,\qquad\textrm{
  and }\qquad
{}_{-1}U_3(\mathbb{F}_q)\cong Gr_4,
$$
where $Gr_4$ is either $\mathbb{Z}/4$ or
$\mathbb{Z}/2\oplus \mathbb{Z}/2$. In particular, tensoring the above
exact sequence with $\mathbb{Z}[1/2]$ yields an isomorphism 
${}_{-1}K_3^h(A)\otimes_{\mathbb{Z}}\mathbb{Z}[1/2]\cong
{}_{-1}K_3^h(k)\otimes_{\mathbb{Z}}\mathbb{Z}[1/2]$. 

We are very grateful to the referee for generously providing the following
argument for finite generation of $KSp_3(A)$ for a ring of $S$-integers
$A=\mathcal{O}_{k,S}$. First note that finite generation for algebraic
K-theory is proved in \cite{quillen}. There are Karoubi periodicity
sequences of the form 
$$
K_i(A)\to GW^j_i(A)\to GW^{j-1}_{i-1}(A)\to K_{i-1}(A)
$$
We first use the sequence for $j=2$ and $i=3$, in which case
$GW^2_3(A)=KSp_3(A)$ in our notation:
$$
K_3(A)\to KSp_3(A)\to GW^1_2(A).
$$
By the abovementioned finite generation for algebraic K-theory, it
suffices to show finite generation for $GW^1_2(A)$. Now we use the
Karoubi periodicity sequence for $j=1$ and $i=2$:
$$
K_2(A)\to GW^1_2(A)\to GW^0_1(A),
$$
which reduces us to show finite generation for $GW^0_1(A)=KO_1(A)$. By
\cite[Corollary 4.7.6]{bass}, there is an exact sequence
$$
K_1(A)\to KO_1(A)\to
{}_2\operatorname{Pic}(A)\oplus\operatorname{Hom}(\Spec
A,\mathbb{Z}/2)\to 0.
$$
As we are interested in results with $\mathbb{Z}[1/2]$-coefficients,
the last  term  is $2$-torsion and hence not relevant. Nevertheless, it is
well-known that $S$-class groups are finite which implies finite
generation of the $2$-torsion term. The finite generation for
$KO_1(A)[1/2]$ then follows from finite generation of $K_1(A)$, which
is the well-known finite generation of $S$-unit groups from number
theory. 
\end{proof}

\begin{remark}
Alternatively, one can deduce finite generation of
$KSp_n(\mathcal{O}_{K,S})$ as follows: the existence of the
Borel-Serre compactification of 
$BSp_{2n}(\mathcal{O}_{k,S})$ implies that its homology is finitely
generated. The elementary  subgroup $Ep_{2n}(\mathcal{O}_{k,S})$ is a
perfect finite-index subgroup of
$Sp_{2n}(\mathcal{O}_{k,S})$. Applying Serre's theory of classes of
abelian groups to the universal cover of
$BSp_{\infty}(\mathcal{O}_{k,S})^+$ implies finite generation for
symplectic K-groups. 
\end{remark}

\begin{proposition}
\label{prop:ksploc}
Let $k$ be a non-archimedean local field of characteristic $\neq 2$,
and let $\ell$ be an odd prime different from the characteristic. Then 
$KSp_3(k)\otimes_{\mathbb{Z}}\mathbb{Z}/\ell$ is finite. 
\end{proposition}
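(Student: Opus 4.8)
The plan is to reduce the claim to the analogous finiteness result for $S$-integers in a global field, mirroring the number-field argument of \prettyref{prop:kspfin}, and then to handle the local situation either by a direct Hornbostel-style $U$-theory computation or by a localization/dévissage comparison with the residue field. First I would fix a uniformizer and the valuation ring $\mathcal{O}_k\subset k$ with finite residue field $\mathbb{F}_q$, $q=p^f$, and recall the Karoubi periodicity sequence
\[
K_3(k)\to KSp_3(k)=GW^2_3(k)\to GW^1_2(k)\to K_2(k),
\]
so that, after tensoring with $\mathbb{Z}/\ell$, it suffices to control $K_3(k)\otimes\mathbb{Z}/\ell$ and $GW^1_2(k)\otimes\mathbb{Z}/\ell$. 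The first is finite: $K_3(k)$ of a non-archimedean local field is, up to $p$-torsion, a finitely generated $\mathbb{Z}_\ell$-module with a controlled torsion subgroup (this follows from the computations of $K_*$ of local fields, e.g. via the localization sequence $K_*(\mathcal{O}_k)\to K_*(k)\to K_{*-1}(\mathbb{F}_q)$ together with finiteness of $K_*$ of finite fields), so $K_3(k)\otimes\mathbb{Z}/\ell$ is certainly finite.

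Next I would treat $GW^1_2(k)$ by the same two-step descent used in \prettyref{prop:kspfin}: the Karoubi sequence
\[
K_2(k)\to GW^1_2(k)\to GW^0_1(k)=KO_1(k)\to K_1(k),
\]
reduces us to $KO_1(k)\otimes\mathbb{Z}/\ell$, and by \cite[Corollary 4.7.6]{bass} there is an exact sequence
\[
K_1(k)\to KO_1(k)\to {}_2\operatorname{Pic}(k)\oplus\operatorname{Hom}(\Spec k,\mathbb{Z}/2)\to 0.
\]
Over a field the Picard group vanishes and the last two terms are $2$-torsion, hence die after $\otimes\,\mathbb{Z}/\ell$ for $\ell$ odd; so $KO_1(k)\otimes\mathbb{Z}/\ell$ is a quotient of $K_1(k)\otimes\mathbb{Z}/\ell=k^\times/(k^\times)^\ell$, which is finite because $k$ is a local field (its unit group is a finitely generated $\mathbb{Z}_p$-module times $\mathbb{Z}\times\mu_\infty$, and the $\ell$-torsion and $\ell$-cotorsion of $\mu(k)$ are finite). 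Finally $K_2(k)\otimes\mathbb{Z}/\ell$ is finite because $K_2$ of a local field is $\mu(k)$ up to a uniquely divisible group (Moore's theorem), so all the terms bounding $GW^1_2(k)$ and hence $KSp_3(k)$ have finite mod-$\ell$ reductions, and a short diagram chase through the two Karoubi sequences yields finiteness of $KSp_3(k)\otimes\mathbb{Z}/\ell$.

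The main obstacle I anticipate is not any single finiteness input — each of $K_1,K_2,K_3$ and the orthogonal $K_1$ is classically understood for local fields — but rather making the Karoubi periodicity exact sequences genuinely available with the correct indexing and identifying $GW^2_3$ with $KSp_3$ in this setting (one must be slightly careful about characteristic $\neq 2$, the presence of $2$ in the residue characteristic versus $p$ odd, and the behaviour of hermitian K-theory of local fields, which is less standard in the literature than the algebraic case). An alternative, possibly cleaner route that sidesteps part of this is to use Hornbostel's localization sequence \cite{hornbostel} directly for the valuation ring $\mathcal{O}_k$,
\[
\cdots\to {}_{-1}U_3(\mathbb{F}_q)\to {}_{-1}K^h_3(\mathcal{O}_k)\to {}_{-1}K^h_3(k)\to {}_{-1}U_2(\mathbb{F}_q)\to\cdots,
\]
combined with ${}_{-1}U_2(\mathbb{F}_q)\cong\mathbb{Z}/2$ and ${}_{-1}U_3(\mathbb{F}_q)\cong Gr_4$ (which are $2$-groups, hence vanish mod $\ell$), so that ${}_{-1}K^h_3(\mathcal{O}_k)\otimes\mathbb{Z}/\ell\cong KSp_3(k)\otimes\mathbb{Z}/\ell$; one then only needs finiteness of $KSp_3(\mathcal{O}_k)\otimes\mathbb{Z}/\ell$, which follows from the finite generation of hermitian K-theory of the regular ring $\mathcal{O}_k$ by the same Karoubi-periodicity-plus-Bass argument as in \prettyref{prop:kspfin}, now with $A=\mathcal{O}_k$ in place of a ring of $S$-integers. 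I would present whichever of these two arguments comes out shortest, most likely the second.
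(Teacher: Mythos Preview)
Your second approach---Hornbostel's localization sequence for the valuation ring $\mathcal{O}_k$, killing the $U$-theory terms of the finite residue field mod $\ell$, and reducing to ${}_{-1}K^h_3(\mathcal{O}_k)\otimes\mathbb{Z}/\ell$---is exactly the route the paper takes. The divergence is in the last step. You propose to deduce finiteness of $KSp_3(\mathcal{O}_k)\otimes\mathbb{Z}/\ell$ by rerunning the Karoubi-periodicity-plus-Bass argument of \prettyref{prop:kspfin} with $A=\mathcal{O}_k$; but that argument rests on Quillen's finite generation of $K_i(A)$ for rings of $S$-integers, and this input simply fails for a complete discrete valuation ring: already $K_1(\mathcal{O}_k)=\mathcal{O}_k^\times$ contains a free $\mathbb{Z}_p$-module of positive rank and is not finitely generated as an abelian group. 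So ``finite generation of hermitian K-theory of the regular ring $\mathcal{O}_k$'' is not available, and your proposed justification breaks here.

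The paper closes the gap differently and more cheaply: it invokes completeness of $\mathcal{O}_k$ to obtain a rigidity isomorphism
\[
{}_{-1}K^h_3(\mathcal{O}_k)\otimes\mathbb{Z}/\ell\;\cong\;{}_{-1}K^h_3(\mathcal{O}_k/\mathfrak{m})\otimes\mathbb{Z}/\ell,
\]
and the right-hand side is finite because the residue field is finite. This replaces your appeal to finite generation by a one-line reduction to a finite field. Your first approach via iterated Karoubi periodicity over $k$ itself can also be made to work, but you should be aware that tensoring a four-term exact sequence with $\mathbb{Z}/\ell$ does not preserve exactness; you would need to control not only $M\otimes\mathbb{Z}/\ell$ but also the $\ell$-torsion $M[\ell]$ for each intermediate group, which adds bookkeeping the paper's argument avoids entirely.
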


\begin{proof}
We denote by $\mathcal{O}$ the valuation ring, and by
$\mathcal{O}/\mathfrak{m}$ the residue field. 
Using the localization sequence for symplectic K-theory as in
\prettyref{prop:kspfin} before, it suffices to prove the assertion for
${}_{-1}U_2(\mathcal{O}/\mathfrak{m})$ and
${}_{-1}K_3^h(\mathcal{O})$. As in \prettyref{prop:kspfin}, 
${}_{-1}U_2(\mathbb{F}_q)$ is a finite $2$-group, hence it does not
contribute. The valuation ring is a complete discrete valuation ring,
therefore we have an isomorphism
$$
{}_{-1}K_3^h(\mathcal{O})\otimes\mathbb{Z}/\ell
\cong{}_{-1}K_3^h(\mathcal{O}/\mathfrak{m})\otimes\mathbb{Z}/\ell. 
$$
The group on the right-hand side is a finite group. 
\end{proof}

\begin{theorem}
\label{thm:main1}
Let $k$ be a number field. Then the kernel of the natural
change-of-topology  morphism 
$$
H_3(SL_2(k),\mathbb{Z})\rightarrow
H_3(BSL_2(k[\Delta^\bullet]),\mathbb{Z})
$$ 
is not finitely generated. 
The same also holds with $\mathbb{Z}/\ell$-coefficients, $\ell$ an odd
prime for which $[k(\zeta_\ell):k]$ is even. 
\end{theorem}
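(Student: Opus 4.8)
The plan is to deduce \prettyref{thm:main1} directly from the finiteness result \prettyref{prop:h3fin} together with the non-finite-generation of group homology coming from the residue/specialization maps of \prettyref{sec:bloch}. The key point is a size discrepancy: the target $H_3(BSL_2(k[\Delta^\bullet]),\mathbb{Z}[1/2])$ of the change-of-topology morphism is a finitely generated $\mathbb{Z}[1/2]$-module by \prettyref{prop:h3fin}(ii), whereas the source $H_3(SL_2(k),\mathbb{Z}[1/2])$ is \emph{not} finitely generated --- indeed the proof of \prettyref{thm:main2b} in the case $d=0$ (transcendence degree zero over the prime field, which is exactly the number-field case) shows that already $\pb{k}\otimes\mathbb{Z}[1/2]$ is not finitely generated, and via \prettyref{cor:ufd} the subgroup $\mathcal{J}_kH_3(SL_2(k),\mathbb{Z}[1/2])$ surjects onto $\bigoplus_{p\in P}\pb{\bar{k}_p}\otimes\mathbb{Z}[1/2]$, which is not finitely generated by \prettyref{lem:cheb} and \prettyref{rem:pbfin}.

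First I would record the purely formal observation: if $f\colon M\to N$ is a homomorphism of abelian groups with $N$ finitely generated over $\mathbb{Z}[1/2]$ (or, in the torsion-coefficient case, finite) and $M$ not finitely generated, then $\ker f$ is not finitely generated --- otherwise $M/\ker f \hookrightarrow N$ would be finitely generated and hence so would $M$ be, as an extension of finitely generated groups. Applying this with $M=H_3(SL_2(k),\mathbb{Z}[1/2])$, $N=H_3(BSL_2(k[\Delta^\bullet]),\mathbb{Z}[1/2])$, and $f$ the change-of-topology morphism gives the first assertion, once one invokes \prettyref{prop:h3fin}(ii) for finite generation of $N$ and the argument from \prettyref{thm:main2b} (equivalently \prettyref{cor:factor} and \prettyref{cor:ufd}) for non-finite-generation of $M$. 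Note here that $H_3(SL_2(k),\mathbb{Z})$ not finitely generated forces $H_3(SL_2(k),\mathbb{Z}[1/2])$ not finitely generated, so working with $\mathbb{Z}[1/2]$-coefficients throughout is harmless and the statement with $\mathbb{Z}$-coefficients follows.

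For the $\mathbb{Z}/\ell$-version with $\ell$ an odd prime such that $[k(\zeta_\ell):k]$ is even, the situation is slightly different because $H_3(SL_2(k),\mathbb{Z}/\ell)$ need not be non-finitely-generated over $\mathbb{Z}/\ell$ in an obvious way; instead I would argue as in the second half of \prettyref{thm:main2b}: by \prettyref{lem:cheb} there are infinitely many finite places with $\ell\mid q_v+1$, so by \prettyref{rem:pbfin} we get infinitely many residue fields $\bar{k}_p$ with $\pb{\bar{k}_p}\otimes\mathbb{Z}/\ell\neq 0$; then \prettyref{cor:ufd} (applied with $A=\mathbb{Z}/\ell$) together with \prettyref{cor:factor} shows the kernel of the change-of-topology morphism with $\mathbb{Z}/\ell$-coefficients surjects onto the infinite direct sum $\bigoplus_{p\in P}\pb{\bar{k}_p}\otimes\mathbb{Z}/\ell$, hence is not finitely generated. (One does not even need \prettyref{prop:h3fin}(i) here, since the residue-map argument is self-contained; but one could alternatively quote it to run the same size argument as in the $\mathbb{Z}[1/2]$-case.)

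The main obstacle --- really the only non-formal input --- is establishing that $H_3(BSL_2(k[\Delta^\bullet]),\mathbb{Z}[1/2])$ is finitely generated for $k$ a number field, i.e.\ \prettyref{prop:h3fin}(ii). That in turn rests on \prettyref{cor:stabh3} identifying the relevant degree-$3$ homology with $H_3(BSp_\infty(k[\Delta^\bullet]),\mathbb{Z})$, on the Hurewicz surjection from $\pi_3^{\Ao}(BSp_\infty)(k)\cong KSp_3(k)$, and finally on the finite generation of $KSp_3(k)\otimes\mathbb{Z}[1/2]$ proved in \prettyref{prop:kspfin}. Everything else in the proof of \prettyref{thm:main1} is then the elementary kernel argument above combined with the already-established non-finite-generation of the Bloch-group side; concretely, the proof is essentially one paragraph citing \prettyref{prop:h3fin}, \prettyref{cor:factor}, \prettyref{cor:ufd}, \prettyref{lem:cheb} and \prettyref{rem:pbfin} and invoking the split structure exhibited in the proof of \prettyref{thm:main2b}.
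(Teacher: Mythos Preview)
Your proposal is correct and follows essentially the same strategy as the paper: for $\mathbb{Z}[1/2]$-coefficients you run the size argument (target finitely generated by \prettyref{prop:h3fin}(ii), source not finitely generated by the proof of \prettyref{thm:main2b}), and for $\mathbb{Z}/\ell$-coefficients you invoke the residue-map lower bound directly. One small slip: in your parenthetical you suggest one could ``alternatively quote \prettyref{prop:h3fin}(i)'' to run the size argument in the $\mathbb{Z}/\ell$ case, but part (i) is stated for non-archimedean \emph{local} fields, not number fields; the paper simply cites \prettyref{prop:h3fin} uniformly, and your direct residue-map argument is in any case the cleaner route here since it avoids having to control the $\ell$-torsion in $H_2(BSL_2(k[\Delta^\bullet]),\mathbb{Z})\cong K_2^{MW}(k)$ via universal coefficients.
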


\begin{proof}
Let $A$ denote either $\mathbb{Z}[1/2]$ or $\mathbb{Z}/\ell$ where $\ell$ is an odd prime for which 
$[k(\zeta_\ell):k]$ is even. By the proof of \prettyref{thm:main2b}, the group 
$H_3(SL_2(k),A)$ is not finitely generated while,  by \prettyref{prop:h3fin}, $H_3(BSL_2(k[\Delta^\bullet]),A)$ 
is finitely generated. Thus the kernel of 
$$
H_3(SL_2(k),A)\rightarrow
H_3(BSL_2(k[\Delta^\bullet]),A)
$$ 
is not finitely generated. 
\end{proof}

\section{On the comparison of unstable K-theories}
\label{sec:ktheory}

In this section, we briefly discuss some further consequences of our results
for the comparison of ``unstable K-theories''. There are two possible
definitions of unstable K-theory associated to a linear algebraic
group $G$ and a commutative unital ring $R$ that we want to
discuss. For simplicity, we restrict to $G=SL_n$ and $R=k$, avoiding
subtleties in more general settings.
\begin{itemize}
\item
On the one hand, following
Quillen \cite{quillen:plus}, one can define a version of unstable
K-theory using the plus-construction applied to a classifying space: 
$$
K^Q_i(SL_n,k):=\pi_i BSL_n(k)^+.
$$
The plus-construction here is taken with respect to the maximal
perfect subgroup $E_n(k)=SL_n(k)$. 
\item On the other hand, following Karoubi-Villamayor
\cite{karoubi:villamayor}, one can define a version of unstable
K-theory as homotopy of the classifying space of the simplicial
replacement:
$$
KV_i(SL_n,k):=\pi_i BSL_n(k[\Delta^\bullet]).
$$
\end{itemize}
There is a natural comparison morphism $BSL_n(k)^+\to
BSL_n(k[\Delta^\bullet])$ which is induced from the inclusion of
constants $BSL_n(k)\to BSL_n(k[\Delta^\bullet])$ via the universal
property of the plus-construction. Applying homotopy groups, this
induces a comparison morphism $K^Q_i(SL_n,k)\to KV_i(SL_n,k)$. 

It is well-known (and primarily a consequence of homotopy invariance
of algebraic K-theory) that the comparison morphism becomes a
weak equivalence $BSL(k)^+\to BSL(k[\Delta^\bullet])$ in the limit
$n\to\infty$.  Our results now  show that the comparison morphism
 $BSL_2(k)^+\to BSL_2(k[\Delta^\bullet])$ fails to be a weak
equivalence for large classes of fields.

\begin{proposition}
Let $k$ be a field satisfying the conditions of
\prettyref{thm:main}. Then the natural map 
$$
BSL_2(k)^+\to BSL_2(k[\Delta^\bullet])
$$
is not a weak equivalence. 
\end{proposition}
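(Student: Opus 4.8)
The plan is to deduce the statement directly from \prettyref{thm:main} by comparing homology groups, using the fact that the plus-construction preserves homology. First I would recall that $BSL_2(k)^+\to BSL_2(k[\Delta^\bullet])$ being a weak equivalence would in particular force an isomorphism on all homology groups with any coefficients. Since the plus-construction $X\to X^+$ is by definition a homology isomorphism, we have $H_\bullet(BSL_2(k)^+,M)\cong H_\bullet(SL_2(k),M)$ for any abelian group $M$; and the composite $H_\bullet(SL_2(k),M)\to H_\bullet(BSL_2(k)^+,M)\to H_\bullet(BSL_2(k[\Delta^\bullet]),M)$ is precisely the change-of-topology morphism of \prettyref{def:ghao}. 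Therefore a weak equivalence $BSL_2(k)^+\to BSL_2(k[\Delta^\bullet])$ would imply that the change-of-topology morphism is an isomorphism on $H_3$.

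Next I would invoke \prettyref{thm:main}: for a field $k$ satisfying its hypotheses (either an infinite perfect field finitely generated over its prime field with $[k(\zeta_\ell):k]$ even for some odd prime $\ell$, or a complete discretely valued field with finite residue field of odd order), the change-of-topology morphism in degree $3$ has nontrivial kernel --- indeed a very large one --- either with $\mathbb{Z}/\ell$-coefficients or with $\mathbb{Z}[1/2]$-coefficients, by \prettyref{thm:main2b} resp. \prettyref{thm:main2c}. This contradicts the isomorphism forced above, so the comparison map cannot be a weak equivalence.

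\textbf{Main obstacle.}
The only genuine subtlety is making sure the coefficient module in \prettyref{thm:main} matches what a weak equivalence would give: a weak equivalence of connected spaces induces isomorphisms on integral homology, hence (by the universal coefficient theorem) also on homology with $\mathbb{Z}/\ell$- and $\mathbb{Z}[1/2]$-coefficients, so either case of \prettyref{thm:main} suffices. One should also note that the comparison map respects basepoints and that both sources are path-connected (by \prettyref{rem:sc}, $BSL_2(k[\Delta^\bullet])$ is connected, and $BSL_2(k)^+$ is connected since $SL_2(k)$ is perfect, using $SL_2(k)=E_2(k)$ for $k$ infinite), so there is no ambiguity in speaking of ``the'' induced map on $H_3$. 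Beyond this bookkeeping, the argument is immediate once \prettyref{thm:main} is in hand.
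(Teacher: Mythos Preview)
Your proposal is correct and follows essentially the same argument as the paper: both observe that the plus-construction map is a homology isomorphism, so a weak equivalence $BSL_2(k)^+\to BSL_2(k[\Delta^\bullet])$ would force the change-of-topology map to be a homology isomorphism, contradicting \prettyref{thm:main}. Your added remarks on passing from integral to $\mathbb{Z}/\ell$- or $\mathbb{Z}[1/2]$-coefficients via the universal coefficient theorem, and on connectedness, are harmless elaborations the paper leaves implicit.
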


\begin{proof}
The inclusion of constants $BSL_2(k)\to BSL_2(k[\Delta^\bullet])$
factors through the map in the statement by the universal property of
the plus-construction. Assume that the induced map $BSL_2(k)^+\to
BSL_2(k[\Delta^\bullet])$ is a weak equivalence, in particular it
induces an isomorphism in homology. Since the plus-construction map
$BSL_2(k)\to BSL_2(k)^+$ also induces an isomorphism in homology, this
implies that the change-of-topology map 
$$
H_\bullet(BSL_2(k),\mathbb{Z})\to H_\bullet(BSL_2(k[\Delta^\bullet]),\mathbb{Z})
$$
is an isomorphism. For fields $k$ satisfying the conditions of
\prettyref{thm:main} this yields a contradiction, hence the map
$BSL_2(k)^+\to BSL_2(k[\Delta^\bullet])$ can not be a weak
equivalence. 
\end{proof}

With a slightly more careful analysis, we can even make a stronger
statement, showing that the respective $K_3$-groups can not be isomorphic.

\begin{proposition}
\label{prop:k3a}
Let $k$ be a number field. Then the kernel of the natural map
$K^Q_3(SL_2,k)\to KV_3(SL_2,k)$ is not finitely generated.
\end{proposition}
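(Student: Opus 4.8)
The plan is to reduce the statement about unstable $K_3$-groups to the already-proven statement about third homology. The key point is that the comparison map $BSL_2(k)^+ \to BSL_2(k[\Delta^\bullet])$ is a map of simply connected spaces: the source is simply connected because the plus-construction is taken with respect to the perfect subgroup $E_2(k) = SL_2(k)$, and the target is simply connected by \prettyref{rem:sc}. For simply connected spaces, a map is a weak equivalence if and only if it induces an isomorphism on all integral homology groups, and more refined control on homotopy fibers follows from relative Hurewicz. In particular, I expect the kernel of $\pi_3$ to be controlled by $H_3$.

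First I would record that $\pi_i BSL_2(k)^+ \cong \pi_i BSL_2(k)^+$ agrees with $K_3^Q(SL_2,k)$ by definition, and that the plus-construction map induces an isomorphism $H_3(SL_2(k),\mathbb{Z}) \cong H_3(BSL_2(k)^+,\mathbb{Z})$. On the target side, $\pi_3 BSL_2(k[\Delta^\bullet]) = KV_3(SL_2,k)$ and the Hurewicz map $KV_3(SL_2,k) \to H_3(BSL_2(k[\Delta^\bullet]),\mathbb{Z})$ is surjective since the space is simply connected. Now consider the commutative square relating the Hurewicz maps on source and target via the comparison morphism. Both $BSL_2(k)^+$ and $BSL_2(k[\Delta^\bullet])$ are simply connected, so by the Hurewicz theorem $\pi_2$ of each is isomorphic to $H_2$ of each. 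On $\pi_2$, the comparison map corresponds to the change-of-topology map $H_2(SL_2(k),\mathbb{Z}) \to H_2(BSL_2(k[\Delta^\bullet]),\mathbb{Z}) \cong K_2^{MW}(k)$, which by \cite{hutchinson:tao} (as recalled in the excerpt) is an isomorphism $H_2(SL_2(k),\mathbb{Z}) \cong K_2^{MW}(k)$. Hence the homotopy fiber $F$ of the comparison map is $2$-connected, and by the relative Hurewicz theorem $\pi_3 F \cong H_3 F$, fitting into the exact sequence
\[
\pi_3 F \to K_3^Q(SL_2,k) \to KV_3(SL_2,k) \to \pi_2 F = 0.
\]
Moreover, comparing this with the long exact homology sequence of the fibration (or the relative homology sequence of the map), the kernel of $K_3^Q(SL_2,k) \to KV_3(SL_2,k)$ surjects onto, and in fact up to the controlled contribution of $\pi_2 = H_2$ being an isomorphism is isomorphic to, the kernel of $H_3(SL_2(k),\mathbb{Z}) \to H_3(BSL_2(k[\Delta^\bullet]),\mathbb{Z})$.

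Then I would invoke \prettyref{thm:main1}: for $k$ a number field, the kernel of the integral change-of-topology morphism on $H_3$ is not finitely generated. Since the kernel of $K_3^Q(SL_2,k) \to KV_3(SL_2,k)$ surjects onto (a finite-index-type modification of) this non-finitely-generated group — more precisely, the map from $\ker$ on $\pi_3$ to $\ker$ on $H_3$ is an isomorphism because $\pi_2 F = 0$ forces $H_3 F \cong \pi_3 F$ and the five-lemma comparison of the two exact sequences shows the kernels agree — it too fails to be finitely generated. The main obstacle, and the step requiring the most care, is the precise identification of the homotopy fiber's connectivity: one must verify that the comparison map really is $2$-connected, which rests on the fact that $H_2(SL_2(k),\mathbb{Z}) \to K_2^{MW}(k)$ is an isomorphism and that both spaces are simply connected so that Hurewicz applies in the needed range. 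Once that is in hand, the non-finite-generation transfers formally from $H_3$ to $\pi_3$, and the statement follows.
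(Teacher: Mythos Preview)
Your argument has a real gap at the step where you assert that the homotopy fibre $F$ of $BSL_2(k)^+\to BSL_2(k[\Delta^\bullet])$ is $2$-connected. Knowing that the comparison map is an isomorphism on $\pi_2$ only shows $F$ is $1$-connected: from the long exact sequence one gets $\pi_2 F\cong\operatorname{coker}\bigl(K^Q_3(SL_2,k)\to KV_3(SL_2,k)\bigr)$, and by relative Hurewicz applied to the $2$-connected pair this cokernel is identified with $\operatorname{coker}\bigl(H_3(SL_2(k),\Z)\to H_3(BSL_2(k[\Delta^\bullet]),\Z)\bigr)$. As \prettyref{sec:cokernel} explains, that cokernel is controlled by the unresolved surjective-stabilization question $H_3(SL_2(k))\to H_3(Sp_\infty(k))$ and is not known to vanish. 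So you cannot invoke $\pi_3 F\cong H_3 F$; and even if you could, the ``five-lemma comparison'' of kernels is not automatic, since the relation between $H_3F$ and $\ker(H_3X\to H_3Y)$ goes through the Serre spectral sequence with a possible $d_4$ from $H_4$ of the base.

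The paper proceeds differently, by a direct size comparison: it proves that $KV_3(SL_2,k)\cong\pi_2^{\Ao}(SL_2)(k)$ is finitely generated, using the exact sequence of \cite[Theorem~3]{asok:fasel} together with the finiteness of $K_4^M(k)$, $I^5(k)$ and $KSp_3(k)$ for a number field (\prettyref{prop:kspfin}); since $K^Q_3(SL_2,k)$ surjects via Hurewicz onto the non-finitely-generated $H_3(SL_2(k),\Z)$, the kernel of a map from a non-finitely-generated abelian group to a finitely generated one is not finitely generated. Your route can in fact be salvaged, but not through $2$-connectedness of $F$: the surjection $\ker(\pi_3)\twoheadrightarrow\ker(H_3)$ follows from Whitehead's certain exact sequence, which identifies $\ker(\pi_3 Z\to H_3 Z)$ with the image of $\Gamma(\pi_2 Z)$ for simply connected $Z$; since $\pi_2$ agrees on both sides, the map between these Hurewicz kernels is onto and the snake lemma gives what you need.
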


\begin{proof}
We argue via finite generation as in the proof of
\prettyref{thm:main1}. 

(1) We first note that the finite generation argument of
\prettyref{prop:kspfin} more generally proves that the group
$\pi_3^{\Ao}(BSL_2)(k)\otimes\mathbb{Z}[1/2]$ is 
finitely generated if $k$ is a number field. From \cite[Theorem
3]{asok:fasel}, there is an exact sequence 
$$
0\rightarrow T_4'(k)\rightarrow \pi_2^{\Ao}(SL_2)(\Spec
k)\rightarrow KSp_3(k)\rightarrow 0,
$$
where the last term $KSp_3(k)$ is finitely generated by
\prettyref{prop:kspfin}. 
The group $T_4'(k)$ sits in an exact sequence
$$
I^5(k)\rightarrow T_4'(k)\rightarrow S_4'(k)\rightarrow 0,
$$
where $I^5(k)$ is the fifth power of the fundamental ideal of the Witt
ring $W(k)$ and there is a surjection $K_4^M(k)/12\twoheadrightarrow
S_4'(k)$.

For $k$ be a number field, \cite[Theorem
II.2.1(3)]{bass:tate} implies $K^M_n(k)\cong\mathbb{Z}/2^{r_1}$ where
$n\geq 3$ and $r_1$ is the number of real completions. So $K_4^M(k)$
is finite, hence also $S_4'(k)$ is finite. 

The structure of the Witt ring and the powers of the fundamental ideal
are also explicitly known for number fields, by the Hasse-Minkowski
local-global principle. By \cite[Corollary VI.3.9]{lam:book}, the Witt
ring $W(k)$ splits as $W(k)\cong\mathbb{Z}^r\oplus
W(k)_{\operatorname{tor}}$, where $r$ is the number of real places of
$k$ and the subgroup of torsion elements satisfies
$8(W(k)_{\operatorname{tor}})=0$.  For $n\geq 3$, the $n$-th 
power of the fundamental ideal $I^n(k)=2^{n-1}I(k)$ is free
abelian. In particular, $I^5$ is a free abelian group of rank $r$, in
particular finitely generated. 

We conclude that for $k$ a number field, the group $T_4'(k)$ is
finitely generated. By the exact sequence of Asok and Fasel above, the
group $\pi_2^{\Ao}(SL_2)(k)$ is finitely generated as well, which
seems to be an interesting finite-generation result in
$\Ao$-homotopy. 

(2) Since $SL_2(k)=E_2(k)$, the space $BSL_2(k)^+$ is simply
connected. Therefore, the Hurewicz homomorphism $\pi_3BSL_2(k)^+\to
H_3(BSL_2(k),\mathbb{Z})$ is surjective. As in the proof of
\prettyref{thm:main2b}, the group $H_3(BSL_2(k),\mathbb{Z})$ is not
finitely generated, and therefore $\pi_3BSL_2(k)^+$ is not finitely
generated.

(3) From (1) and (2) above, we conclude that the kernel of the
comparison map $K^Q_3(SL_2,k)\to KV_3(SL_2,k)$ is not finitely generated.
\end{proof}

\begin{remark}
Together 
with the surjection 
$$\pi_3^{\Ao}(BSL_2)(k)\otimes\mathbb{Z}[1/2]\to
H_3(BSL_2(k[\Delta^\bullet]),\mathbb{Z}[1/2]),
$$
the above argument provides another way of proving 
\prettyref{thm:main1} without resorting to the improved stability
results of \prettyref{cor:stabh3}. 
\end{remark}

\begin{proposition}
\label{prop:k3b}
Assume that $k$ is a field satisfying the conditions of
\prettyref{thm:main}. Then the comparison map $K^Q_3(SL_2,k)\to
KV_3(SL_2,k)$ is not  injective.
\end{proposition}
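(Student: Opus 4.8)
The plan is to lift the module-theoretic mechanism behind \prettyref{thm:main2c} to the level of homotopy groups. Every classifying space in sight is simply connected: $BSL_2(k)^+$ because $SL_2(k)=E_2(k)$, and $BSL_2(k[\Delta^\bullet])$ by \prettyref{rem:sc}. The Hurewicz theorem therefore gives natural surjections
$$K^Q_3(SL_2,k)=\pi_3BSL_2(k)^+\twoheadrightarrow H_3(SL_2(k),\Z)\qquad\text{and}\qquad KV_3(SL_2,k)=\pi_3BSL_2(k[\Delta^\bullet])\twoheadrightarrow H_3(BSL_2(k[\Delta^\bullet]),\Z).$$
The conjugation action of $k^\times$ from \prettyref{const:action} operates on all four groups; the squares act through inner automorphisms of $SL_2(k)$, hence trivially, so all four are $\grsc{k}$-modules, the two Hurewicz maps are $\grsc{k}$-equivariant, and so is the comparison map $c\colon K^Q_3(SL_2,k)\to KV_3(SL_2,k)$, since $c$ is obtained from the $k^\times$-equivariant inclusion $BSL_2(k)\hookrightarrow BSL_2(k[\Delta^\bullet])$ through the universal property of the plus construction.

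The first step is to note that the Steinberg ideal $\mathcal J_k=\ker(\grsc{k}\to GW(k))$ annihilates $KV_3(SL_2,k)$: by \prettyref{prop:gw} the $\grsc{k}$-module structure on $\pi_3BSL_2(k[\Delta^\bullet])$ descends to $GW(k)$. Equivariance of $c$ then forces $\mathcal J_k\cdot K^Q_3(SL_2,k)\subseteq\ker c$, so it suffices to exhibit a nonzero element of $\mathcal J_k\cdot K^Q_3(SL_2,k)$. Since the left-hand Hurewicz surjection is equivariant, $\mathcal J_k\cdot K^Q_3(SL_2,k)$ maps onto $\mathcal J_k\cdot H_3(SL_2(k),\Z)$, and the task reduces to showing $\mathcal J_k\cdot H_3(SL_2(k),\Z)\neq0$ --- which is exactly the nonvanishing underlying the proofs of \prettyref{thm:main2b} and \prettyref{thm:main2c}.

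Concretely, for $k$ as in \prettyref{thm:main}(2), \prettyref{cor:hloc}(3) identifies $\mathcal J_kH_3(SL_2(k),\Z[1/2])$ with $\bigoplus_{i=0}^{n-1}(\pb{k_i}\otimes\Z[1/2])^{\oplus 2^{n-i-1}}$, which contains $\pb{\overline{k}}\otimes\Z[1/2]$ and is nonzero as soon as $(q+1)'>1$; for $k$ as in \prettyref{thm:main}(1) --- indeed for any infinite, finitely generated field of characteristic $\neq2$ --- \prettyref{cor:ufd} produces a surjection of $\mathcal J_kH_3(SL_2(k),\Z[1/2])$ onto a nonzero, in fact not finitely generated, group. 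Since $\Z[1/2]$ is flat over $\Z$ one has $\mathcal J_kH_3(SL_2(k),\Z)\otimes\Z[1/2]\cong\mathcal J_kH_3(SL_2(k),\Z[1/2])$, so $\mathcal J_kH_3(SL_2(k),\Z)\neq0$ in each case and $\ker c\neq0$. This also recovers \prettyref{prop:k3a} (and, for number fields, sharpens it).

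The step that needs the most care is the use of \prettyref{prop:gw}: that result builds on the Asok--Fasel computation of $\pi_2^{\Ao}(SL_2)$ and is stated for infinite \emph{perfect} fields of characteristic $\neq2$, which covers case (1) and the mixed-characteristic fields of case (2) but not the equal-characteristic local fields $\F{q}((t))$, which are imperfect. For those one would need to extend \prettyref{prop:gw}, or at least its homological shadow that $\mathcal J_k$ annihilates $H_3(BSL_2(k[\Delta^\bullet]),\Z)$; the latter already suffices, because together with $\pi_2BSL_2(k[\Delta^\bullet])\cong K^{MW}_2(k)$ it forces $\mathcal J_k^2\cdot\pi_3BSL_2(k[\Delta^\bullet])=0$, while $\mathcal J_k^2$ still acts invertibly on $\pb{\overline{k}}\otimes\Z[1/2]$ since a Steinberg generator acts there as multiplication by $4$. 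The fields of case (2) with $(q+1)'=1$ (for instance $q=3$ or $q=7$), where the $\Z[1/2]$-kernel of change-of-topology already vanishes, fall outside the scope of this argument and should be excluded from the statement.
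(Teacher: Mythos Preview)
Your argument is correct and follows essentially the same route as the paper: both use \prettyref{prop:gw} to see that $\mathcal{J}_k$ annihilates $KV_3(SL_2,k)$, then combine equivariance of the comparison and Hurewicz maps with the nonvanishing of $\mathcal{J}_k H_3(SL_2(k),\Z)$ established in Section~\ref{sec:bloch}; the paper packages this as a contradiction via a $-\otimes_{\grsc{k}}GW(k)$ diagram, while you argue directly that $\mathcal{J}_k K^Q_3\subseteq\ker c$ surjects onto $\mathcal{J}_k H_3\neq 0$. Your closing remarks on imperfect local fields and on the degenerate case $(q+1)'=1$ are apt --- the paper's own proof invokes \prettyref{prop:gw} (stated for perfect fields) and the nontriviality of the kernel in \prettyref{thm:main} in exactly the same way, so these caveats apply equally there.
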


\begin{proof}
We saw earlier in \prettyref{sec:mod} that the $k^\times$-action on
$BSL_2(k[\Delta^\bullet])$ induces a 
$\mathbb{Z}[k^\times/(k^\times)^2]$-module structure on
$\pi_3BSL_2(k[\Delta^\bullet])$ which descends to a $GW(k)$-module
structure, by \prettyref{prop:gw}. 
As in \prettyref{sec:mod}, the
conjugation with diagonal matrices $\operatorname{diag}(u,1)$
also induces a $\mathbb{Z}[k^\times/(k^\times)^2]$-module structure on
$\pi_3BSL_2(k)^+$, and the comparison map 
$$\pi_3BSL_2(k)^+\to
\pi_3BSL_2(k[\Delta^\bullet])$$
is $\mathbb{Z}[k^\times/(k^\times)^2]$-equivariant. We have a
commutative diagram
\begin{center}
\begin{minipage}[c]{10cm}
\xymatrix{
\ker H\ar[r]\ar[d] & \ker H\otimes GW(k)\ar[d] \\
\pi_3BSL_2(k)^+\ar[r]\ar[d]_H & \pi_3BSL_2(k)^+\otimes GW(k)\ar[d] \\ 
H_3(BSL_2(k),\mathbb{Z})\ar[r] \ar[d] &
H_3(BSL_2(k),\mathbb{Z})\otimes GW(k)\ar[d]  \\
0&0
}
\end{minipage}
\end{center}
We omitted that all tensors above are over
$\mathbb{Z}[k^\times/(k^\times)^2]$ for typographical reasons.
The map $\ker H\to\ker H\otimes GW(k)$ is surjective since
$\mathbb{Z}[k^\times/(k^\times)^2]\to GW(k)$ is surjective. Assuming
$\pi_3BSL_2(k)^+$ is a $GW(k)$-module, then $\pi_3BSL_2(k)^+\to
\pi_3BSL_2(k)^+\otimes GW(k)$ is an isomorphism. But  then $\ker H$ and
$\ker H\otimes GW(k)$ have isomorphic images in $\pi_3BSL_2(k)^+$ and
$\pi_3BSL_2(k)^+\otimes GW(k)$, respectively. This implies that
$H_3(BSL_2(k),\mathbb{Z})$ already has a $GW(k)$-module structure,
contradicting the conclusion of \prettyref{thm:main}. 
\end{proof}

The above results imply that there is no unique natural definition of 
unstable K-theory. 

\section{The cokernel of the change-of-topology morphism}
\label{sec:cokernel}
The main results in this article concern estimates of the kernel of
the change-of-topology morphism from $H_3(SL_2(k), A)$ to
$H_3(BSL_2(k[\Delta^\bullet]), A)$. In this section, we discuss  the
cokernel of this morphism.   
In order to do this, we treat some aspects of the spectral sequence
associated to the bisimplicial set $BSL_2(k[\Delta^\bullet])$. This
spectral sequence computes the homology of the diagonal simplicial set
$dBSL_2(k[\Delta^\bullet])$ and has the form
$$
E^1_{p,q}=H_q(BSL_2(k[\Delta^p])),\mathbb{Z})
\Rightarrow H_{p+q}(dBSL_2(k[\Delta^\bullet]),\mathbb{Z}),
$$
with differentials $d^r_{p,q}:E^r_{p,q}\rightarrow E^r_{p-r,q+r-1}$.
Here, the differentials
\[
d^1_{p,q}=\sum_{i=0}^p(-1)^iH_q(d_i):H_q(BSL_2(k[\Delta^p])),\mathbb{Z})\to H_q(BSL_2(k[\Delta^{p-1}])),\mathbb{Z})
\]
 are induced from the simplicial structure of the
simplicial algebra $k[\Delta^\bullet]$. 

\emph{We will assume in this section that the underlying field $k$ is
  infinite of characteristic $\neq 2$.}

\begin{remark}
\label{rem:van}
For infinite $k$, homotopy invariance for homology of $SL_2(k[T])$,
cf. \cite[Theorem 4.3.1]{knudson:book}, implies $E^2_{1,q}\cong 0$ for
all $q>1$. 

Since $H_0(SL_2(k[\Delta^n]),\mathbb{Z})\cong\mathbb{Z}$ for all $n$,
we also have
$$
E^2_{p,0}\cong\left\{\begin{array}{ll}
\mathbb{Z} & p=0\\
0 & \textrm{otherwise}
\end{array}\right.
$$
\end{remark}

\begin{remark}
\label{rem:d221}
The differential $d^2_{2,1}$ is trivial: by the stabilization results,
there is a natural commutative diagram
\begin{center}
\begin{minipage}[c]{10cm}
\xymatrix{
H_2(SL_2(k)) \ar[r] \ar[d]_\cong & H_2(BSL_2(k[\Delta^\bullet]))
\ar[d]^\cong \\ 
H_2(Sp_\infty(k))\ar[r]_\cong & H_2(BSp_\infty(k[\Delta^\bullet])) 
}
\end{minipage}
\end{center}
The right vertical isomorphism is a consequence of
\prettyref{prop:stabsp}, the left vertical isomorphism is a consequence
of Matsumoto's computation of $H_2$. The lower isomorphism follows
from homotopy invariance for the infinite symplectic group
\cite{karoubi}. By the vanishing in \prettyref{rem:van}, we have an 
exact sequence  
$$
H_1(SL_2(k[\Delta^2]),\mathbb{Z})/(\operatorname{im}
d^1_{3,1})\stackrel{d^2_{2,1}}{\longrightarrow}
H_2(SL_2(k),\mathbb{Z})\rightarrow
H_2(BSL_2(k[\Delta^\bullet]),\mathbb{Z})\rightarrow 0,
$$
which proves the claim.
\end{remark}

\begin{proposition}
\label{prop:result}
There is a short exact sequence
$$
0\rightarrow H_3(SL_2(k))/(\operatorname{im}
d^2_{2,2}+\operatorname{im} d^3_{3,1})\rightarrow
H_3(BSL_2(k[\Delta^\bullet]))\rightarrow
$$
$$\rightarrow
H_1(SL_2(k[\Delta^2]))/(\operatorname{im} d^1_{3,1})\rightarrow 0
$$
Moreover, the natural change-of-topology $H_3(SL_2(k))\rightarrow
H_3(BSL_2(k[\Delta^\bullet]))$ factors through the injection above.
\end{proposition}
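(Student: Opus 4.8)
The plan is to read both assertions off the homology spectral sequence of the bisimplicial set $BSL_2(k[\Delta^\bullet])$ set up above, by analysing its total degree $3$ part. The $E^2$-entries contributing to $H_3$ are $E^2_{0,3}$, $E^2_{1,2}$, $E^2_{2,1}$ and $E^2_{3,0}$. By \prettyref{rem:van} one has $E^2_{1,2}=0$ and $E^2_{3,0}=0$. For the corner term, the two face maps $k[\Delta^1]\rightrightarrows k[\Delta^0]=k$ are both retractions of the constant inclusion $k\hookrightarrow k[\Delta^1]$, which induces an isomorphism on $H_3(SL_2(-))$ by homotopy invariance \cite[Theorem 4.3.1]{knudson:book}; hence $d^1_{1,3}=(d_0)_*-(d_1)_*=0$, and as this is the only $d^1$-differential touching $E^1_{0,3}$, we obtain $E^2_{0,3}=H_3(SL_2(k))$. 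Finally, since $k$ is infinite we have $H_1(SL_2(k[T]))=0$ (here $SL_2(k[T])=E_2(k[T])$ is perfect, each elementary matrix being a commutator with a diagonal matrix; equivalently this is homotopy invariance in degree $1$), so $E^1_{1,1}=0$; consequently $d^1_{2,1}\colon E^1_{2,1}\to E^1_{1,1}$ is the zero map, and $E^2_{2,1}=H_1(SL_2(k[\Delta^2]))/\operatorname{im}(d^1_{3,1})$.

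Next I would chase the higher differentials. The corner $E^r_{0,3}$ receives differentials only from $E^r_{r,4-r}$; these are $d^2_{2,2}$ on page $2$ and $d^3_{3,1}$ on page $3$, while for $r\geq 4$ the source $E^r_{r,4-r}$ is a subquotient of $E^2_{r,4-r}$, which vanishes ($E^2_{4,0}=0$ for $r=4$ by \prettyref{rem:van}, and the second index is negative for $r\geq 5$). Since no differential leaves this corner, $E^\infty_{0,3}=H_3(SL_2(k))/(\operatorname{im}d^2_{2,2}+\operatorname{im}d^3_{3,1})$, where $\operatorname{im}d^3_{3,1}$ is to be read as the preimage in $H_3(SL_2(k))$ of its image in $E^3_{0,3}$. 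For $E^r_{2,1}$ with $r\geq 2$ there are no incoming differentials ($E^2_{4,0}=0$, and the remaining sources have negative second index), and the only possibly non-zero outgoing one is $d^2_{2,1}$, which vanishes by \prettyref{rem:d221}; hence $E^\infty_{2,1}=E^2_{2,1}=H_1(SL_2(k[\Delta^2]))/\operatorname{im}(d^1_{3,1})$.

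With these identifications, the filtration $F_\bullet$ of $H_3(BSL_2(k[\Delta^\bullet]))$ induced by the spectral sequence, which satisfies $F_p/F_{p-1}\cong E^\infty_{p,3-p}$, has $F_0=F_1$ and $F_2=F_3=H_3(BSL_2(k[\Delta^\bullet]))$ (because $E^\infty_{1,2}=E^\infty_{3,0}=0$). Hence it collapses to the short exact sequence $0\to E^\infty_{0,3}\to H_3(BSL_2(k[\Delta^\bullet]))\to E^\infty_{2,1}\to 0$, which is the one in the statement. For the final clause, I would observe that the change-of-topology morphism is induced by the filtered map of bisimplicial sets from the constant bisimplicial set $BSL_2(k)$ to $BSL_2(k[\Delta^\bullet])$, hence is a morphism of the associated spectral sequences; the source spectral sequence is concentrated in the column $p=0$, with $E^1_{0,3}=H_3(SL_2(k))$, and the comparison map in bidegree $(0,3)$ is the identity. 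Therefore the induced map on $E^\infty$ is the canonical surjection $H_3(SL_2(k))\twoheadrightarrow E^\infty_{0,3}$, and since the source abutment lies entirely in filtration degree $0$, the change-of-topology morphism equals this surjection followed by the inclusion $E^\infty_{0,3}=F_0\hookrightarrow H_3(BSL_2(k[\Delta^\bullet]))$; in particular it factors through the injection in the short exact sequence.

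The routine but slightly delicate part is the differential bookkeeping, in particular isolating the two inputs without which $E^\infty_{2,1}$ would not simplify to the stated quotient --- the vanishing $E^1_{1,1}=0$, which forces $d^1_{2,1}=0$, and the vanishing $d^2_{2,1}=0$ of \prettyref{rem:d221} --- as well as spelling out the identification of the change-of-topology morphism with the column-$p=0$ edge homomorphism, on which the last clause depends.
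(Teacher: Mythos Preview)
Your proof is correct and follows essentially the same approach as the paper's: both identify the total-degree-$3$ part of the spectral sequence, use \prettyref{rem:van} to kill $E^\infty_{1,2}$ and $E^\infty_{3,0}$, compute $E^\infty_{0,3}$ and $E^\infty_{2,1}$ via the same vanishing arguments (perfectness of $SL_2(k[T])$, homotopy invariance, and \prettyref{rem:d221}), and read off the factorisation from the edge morphism in filtration degree $0$. Your write-up is in fact slightly more explicit than the paper's in two places: you spell out why $d^1_{1,3}=0$ (both face maps are retractions of an isomorphism), and you justify the final clause via a morphism of spectral sequences rather than simply invoking the inclusion of $0$-simplices.
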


\begin{proof}
By the vanishing in \prettyref{rem:van}, we have
$E^\infty_{1,2}=E^\infty_{3,0}=0$. The exact sequence claimed is the
one induced from the spectral sequence:
$$
0\rightarrow E^\infty_{0,3}\rightarrow
H_3(BSL_2(k[\Delta^\bullet]))\rightarrow E^\infty_{2,1}\rightarrow 0.
$$

We prove  $E^\infty_{2,1}=H_1(SL_2(k[\Delta^2]))/(\im
d^1_{3,1})$. We have  $H_1(SL_2(k[T]))=0$, since $SL_2(k[T])$ is
perfect. Hence the differential $d^1_{2,1}$ is trivial. No
differential except $d^1_{3,1}$ hits the $(2,1)$-entry. By
\prettyref{rem:d221}, the differential $d^2_{2,1}$ is also
trivial. This proves the claim. 

We identify $E^\infty_{0,3}$. All differentials starting at $(0,3)$
are trivial. Therefore, $E^\infty_{0,3}$ is the quotient of
$H_3(SL_2(k))$ by all differentials hitting it. The differentials
$d^1_{1,3}$  and $d^1_{4,0}$ are trivial by \prettyref{rem:van}. Only
the differentials $d^2_{2,2}$ and $d^2_{3,1}$ remain.

The last statement is obvious, since the  natural change-of-topology
morphism includes $SL_2(k)$ as $0$-simplices. Therefore, the natural
map factors as 
$$
H_\bullet(SL_2(k),\mathbb{Z})\rightarrow
E^\infty_{0,\bullet}\rightarrow
H_\bullet(BSL_2(k[\Delta^\bullet]),\mathbb{Z}).
$$
\end{proof}

\begin{remark}
This is a good place to point out that the above result implies that
the spectral sequence 
$$
E^1_{p,q}=H_q(BSL_2(k[\Delta^p])),\mathbb{Z})
\Rightarrow H_{p+q}(dBSL_2(k[\Delta^\bullet]),\mathbb{Z}),
$$
does not degenerate at the $E^2$-page. Moreover, the differentials
$d^2_{2,2}$ and $d^3_{3,1}$ provide a natural relation between the
counterexamples to homotopy invariance for homology of $SL_2$ and the
kernel of the natural change-of-topology morphism 
$$
H_3(SL_2(k),\mathbb{Z})\rightarrow
H_3(BSL_2(k[\Delta^\bullet]),\mathbb{Z}). 
$$
In the cases discussed in \prettyref{thm:main2b} and
\prettyref{thm:main2c}, the above differentials induce non-trivial
morphisms from homotopy-invariance-counterexamples to pre-Bloch groups
of residue fields. 

The explicit computation of such differentials is very
complicated, for various reasons. First of all, our knowledge of the
groups $H_2(SL_2(k[\Delta^2]))$ and $H_1(SL_2(k[\Delta^3]))$ is very
limited. The constructions of \cite{krstic:mccool} show that these
groups tend to be very large, but do not give a precise description of
their structure. Second, the essential step in the computation of
$d^2_{2,2}$ needs explicit lifts of null-homologous cycles in
$\mathcal{Z}_2(SL_2(k[T]))$ to $3$-chains. While the amalgam
decomposition of $SL_2(k[T])$ can in principle be used to compute such
things, the computations easily become too complicated to follow
through. 
\end{remark}

\begin{proposition}
Let $k$ be an infinite field of characteristic $\neq 2$. 
There is a commutative diagram with exact columns
\begin{center}
\begin{minipage}[c]{10cm}
\xymatrix{
  0 \ar[d] & 0 \ar[d] \\
  H_3(SL_2(k))/(\im d^2_{2,2}+\im d^3_{3,1}) \ar[d] \ar[r] & 
  H_3(Sp_\infty(k)) \ar[d] \\
  H_3(BSL_2(k[\Delta^\bullet]))\ar[r]^s \ar[d] & 
  H_3(BSp_\infty(k[\Delta^\bullet])) \ar[d] \\
  E_{2,1}^\infty \ar[d] \ar[r] & 0\\ 0
}
\end{minipage}
\end{center}
The map $s$ above is surjective. 
In particular, we have a surjection
$$
t:E^\infty_{2,1}\twoheadrightarrow
\operatorname{coker}\left(H_3(SL_2(k))\rightarrow 
  H_3(Sp_\infty(k))\right). 
$$
If $k$ is of characteristic $0$, then $s$ and $t$ are isomorphisms. 
\end{proposition}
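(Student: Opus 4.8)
The plan is to build the square from results already in hand and then chase the diagram. The left column is the short exact sequence of \prettyref{prop:result}; abbreviate $A:=H_3(SL_2(k))/(\im d^2_{2,2}+\im d^3_{3,1})$, $B:=H_3(BSL_2(k[\Delta^\bullet]))$ and $C:=E^\infty_{2,1}$, so $0\to A\to B\to C\to 0$. For the right column, observe that the change-of-topology morphism $\phi\colon H_3(Sp_\infty(k))\to H_3(BSp_\infty(k[\Delta^\bullet]))$ is an isomorphism by homotopy invariance for the infinite symplectic group \cite{karoubi}, exactly as used in \prettyref{rem:d221}; putting $A':=H_3(Sp_\infty(k))$ and $B':=H_3(BSp_\infty(k[\Delta^\bullet]))$, the right column is then exact in the displayed shape. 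The arrow $s\colon B\to B'$ is induced by $SL_2\hookrightarrow Sp_\infty$ on $k[\Delta^\bullet]$-points. By \prettyref{prop:result} the kernel of the change-of-topology $H_3(SL_2(k))\to B$ is $\im d^2_{2,2}+\im d^3_{3,1}$, and since the stabilization $H_3(SL_2(k))\to A'$ factors as $\phi^{-1}\circ s$ applied after this change-of-topology, it kills that kernel and hence descends to the top horizontal arrow $A\to A'$; both squares then commute by construction.

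I would then show $s$ is surjective. The map $BSL_2(k[\Delta^\bullet])\to BSp_\infty(k[\Delta^\bullet])$ factors through $BSp_4(k[\Delta^\bullet])$ along $SL_2=Sp_2\hookrightarrow Sp_4\hookrightarrow Sp_\infty$. In the stabilization sequence of \prettyref{prop:stabsp} with $n=2$,
$$
H_4(BSp_2(k[\Delta^\bullet]))\to H_4(BSp_4(k[\Delta^\bullet]))\to K^{MW}_4(k)\to H_3(BSL_2(k[\Delta^\bullet]))\to H_3(BSp_4(k[\Delta^\bullet]))\to 0,
$$
the last arrow is surjective; and \prettyref{prop:stabsp} with $n\geq 3$ (the bound being $3\le 2n-2$) gives $H_3(BSp_4(k[\Delta^\bullet]))\cong H_3(BSp_6(k[\Delta^\bullet]))\cong\cdots\cong H_3(BSp_\infty(k[\Delta^\bullet]))$, so $s$ is onto. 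When $\operatorname{char}k=0$, \prettyref{cor:stabh3}(2) with $n=2$ says in addition that $H_4(BSp_4(k[\Delta^\bullet]))\to K^{MW}_4(k)$ is already surjective, hence the connecting map $K^{MW}_4(k)\to H_3(BSL_2(k[\Delta^\bullet]))$ vanishes, the first arrow above is an isomorphism, and $s$ is an isomorphism.

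The surjection $t$ and the final clause are then formal. Commutativity of the top square and $\phi$ being an isomorphism identify $\operatorname{coker}(A\to A')$ with $B'/s(A)$; since $A=\ker(B\to C)$ and $s$ is surjective, $s$ induces a surjection $C=B/A\twoheadrightarrow B'/s(A)$, which (after this identification) is the asserted map $t$, and $\operatorname{coker}(A\to A')=\operatorname{coker}(H_3(SL_2(k))\to H_3(Sp_\infty(k)))$ because $H_3(SL_2(k))\to A$ is onto. If $\operatorname{char}k=0$ then $s$ is injective, so $\ker(B\to B'/s(A))=A$ and the map $C\to B'/s(A)$ is also injective, whence $t$ (and $s$) are isomorphisms. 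The main obstacle is the surjectivity of $s$: it rests on the chain of symplectic stabilization isomorphisms $H_3(BSp_4)\cong\cdots\cong H_3(BSp_\infty)$ being valid over an arbitrary infinite field of characteristic $\neq 2$, which is precisely what \prettyref{prop:stabsp} provides (with \prettyref{cor:stabh3} supplying the sharper statement in characteristic $0$); the remaining bookkeeping is routine.
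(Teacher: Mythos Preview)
Your argument is correct and follows essentially the same route as the paper: both columns come from \prettyref{prop:result} and Karoubi's homotopy invariance \cite{karoubi}, surjectivity (resp.\ bijectivity in characteristic $0$) of $s$ comes from \prettyref{prop:stabsp} (resp.\ \prettyref{cor:stabh3}), and the claim about $t$ is a diagram chase. The only cosmetic difference is that the paper phrases the construction of the diagram via the morphism of spectral sequences induced by $BSL_2(k[\Delta^\bullet])\to BSp_\infty(k[\Delta^\bullet])$ and invokes the snake lemma for $t$, whereas you build the top arrow by hand and chase directly; these amount to the same thing.
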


\begin{proof}
We first note  that the inclusions of groups $SL_2\hookrightarrow Sp_\infty$
induces a morphism of the  bisimplicial object
$BSL_2(k[\Delta^\bullet])\rightarrow
BSp_\infty(k[\Delta^\bullet])$. The spectral sequence computing the
homology of the diagonal of a bisimplicial object is compatible with
morphisms of bisimplicial objects.  The exact column on the left is a
consequence of \prettyref{prop:result}. By \cite{karoubi}, the
homology of the infinite symplectic group has $\Ao$-invariance for
regular rings in which $2$ is invertible. Therefore the spectral
sequence associated to the bisimplicial object
$BSp_\infty(k[\Delta^\bullet])$ collapses and produces the exact
column on the right. The whole diagram is commutative by the
abovementioned compatibility of the spectral sequences with the
stabilization morphism. 

Surjectivity of $s$ is a consequence of stabilization
\prettyref{prop:stabsp}, and the characteristic $0$ isomorphism is a
consequence of our improved stabilization result
\prettyref{cor:stabh3}. An application of the 
snake-lemma then proves the last claim. Note in particular that the
top horizontal morphism is induced from the standard inclusion
$SL_2\rightarrow Sp_\infty$, hence it is really the stabilization
morphism.
\end{proof}

\begin{remark}
It is interesting to see that the surjectivity of the
change-of-topology map can be completely translated into a question on
stabilization of the homology of linear groups, namely the question of
surjective stabilization for the morphism $H_3(SL_2)\rightarrow
H_3(Sp_\infty)$.  
The known stabilization results for the symplectic groups do not seem
to decide surjective stabilization in this range.
\end{remark}

\end{document}